\theoremstyle{plain}
\newtheorem{theorem}{Theorem}[section]
\newtheorem{lemma}[theorem]{Lemma}
\newtheorem{proposition}[theorem]{Proposition}
\newtheorem{corollary}[theorem]{Corollary}
\newtheorem{Theorem}{Theorem}
\theoremstyle{definition}
\newtheorem{definition}{Definition}[section]
\theoremstyle{remark}
\newtheorem{remark}{Remark}[section]
\newtheorem{example}[remark]{Example}
\renewenvironment{proof}[1]{\vspace*{.1in}\noindent{\bf
Proof{#1}. \/}}{\qed\vspace{3ex}} 
\newcommand{\Lk}{\operatorname{Lk}}
\newcommand{\Id}{\operatorname{Id}}
\newcommand{\St}{\operatorname{St}}
\newcommand{\Nb}{\operatorname{Nb}}
\renewcommand{\Vert}{\operatorname{Vert}}
\newcommand{\ord}{\operatorname{Ord}}
\newcommand{\card}{\operatorname{Card}}
\newcommand{\supp}{\operatorname{Supp}}
\newcommand{\desc}{\operatorname{Desc}}
\newcommand{\<}{\langle}
\renewcommand{\>}{\rangle}
\newcommand{\bbR}{\mathbb R}
\newcommand{\bbZ}{\mathbb Z}
\newcommand{\bbQ}{\mathbb Q}
\newcommand{\bsigma}{\boldsymbol{\sigma}}
\newcommand{\btau}{\boldsymbol{\tau}}
\newcommand{\brho}{\boldsymbol{\rho}}
\newcommand{\bchi}{\boldsymbol{\chi}}
\newcommand{\bt}{\mathbf t}
\renewcommand{\cL}{\mathcal L}
\newcommand{\cA}{\mathcal A}
\newcommand{\cS}{\mathcal S}
\renewcommand{\cR}{\mathcal R}
\renewcommand{\cH}{\mathcal H}
\newcommand{\cP}{\mathcal P}
\renewcommand{\hat}[1]{\widehat{#1}}
\renewcommand{\bar}[1]{\overline{#1}}
\renewcommand{\tilde}[1]{\widetilde{#1}}
\def\@secnumfont{\bfseries}
\def\section{\@startsection{section}{1}%
  \z@{.7\linespacing\@plus\linespacing}{.5\linespacing}%
  {\normalfont\centering\bfseries}}
\def\subsection{\@startsection{subsection}{2}%
  \z@{.5\linespacing\@plus.7\linespacing}{-.5em}%
  {\normalfont\bfseries}}
\title[Rationality and Reciprocity for Coxeter groups]{Rationality and reciprocity for the greedy\\ normal form of a
  Coxeter group}
\author{Richard Scott}
\address{Department of Mathematics and Computer Science\\
Santa Clara University\\
Santa Clara, CA  95053}
\email{rscott@math.scu.edu}
\thanks{The author thanks MSRI for its support and hospitality during
  the writing of this paper.}  
\begin{document}

\begin{abstract}
We show that the characteristic series for the greedy normal form of a
Coxeter group is always a rational series, and prove a reciprocity
formula for this series when the group is right-angled and the nerve
is Eulerian.  As corollaries we obtain many of the known  rationality
and reciprocity results for the growth series of Coxeter groups as
well as some new ones.  
\end{abstract}

\maketitle
\section{Introduction}

{\bf Background.}
Let $G$ be a finitely generated group with a finite generating set
$S$.  The {\em   growth series} of $G$ relative to $S$ is the power
  series  
\[\gamma(t)=\sum_{g\in G}t^{|g|}\]
where $|g|$ denotes the word length of $g$ induced by $S$.  Growth series are an important measure of
complexity and size for infinite groups.  If $G$ satisfies certain
algorithmic properties (for example, has a normal form recognized by a
finite state automaton), then $\gamma$ is known to be a rational
function. Moreover, if $G$ is a discrete group acting
geometrically on a manifold or cell complex $M$ and $S$ is suitably
related to the geometry of $M$, then one often finds interesting
connections with the topology of $M$.  Two notable examples are cases
where (1) special values of $\gamma(t)$ are related to the Euler-Poincar\'{e}
characteristic of $M$ or $G$  and (2) reciprocity formulas for
$\gamma$ (see, e.g., \cite{Serre, CD, Dymara} for Coxeter groups,
\cite{FP} for Fuchsian groups).  For example, in the
case where $W$ is a Coxeter group and $S$ is the standard generating
set, Serre \cite{Serre} showed that $\bchi(W)=1/\gamma(1)$ and
that for affine Coxeter groups $\gamma(1/t)=\pm\gamma(t)$.  

To describe more general formulas involving Coxeter groups, we recall
some standard terminology and constructions. Let $W$
be a Coxeter groups with standard generating set $S$. Given
a subset $\sigma\subset S$, let $W_{\sigma}$ denote the {\em parabolic
  subgroup} of $W$ generated by $\sigma$ (by convention
$W_{\emptyset}=\{1\}$).  Let $N=N(W,S)$ be the set of all {\em
  spherical subsets}
$\sigma\subset S$; i.e., $N$ consists of all $\sigma$ such that
$W_{\sigma}$ is finite. Each such finite $W_{\sigma}$ is itself a Coxeter
group and has a unique element of longest length, 
which we denote by $w_{\sigma}$.  The collection $N$, partially
ordered by inclusion, is an abstract simplicial complex, called the
{\em nerve} of $W$. Let $W\cS$ denote the collection of cosets
$\{wW_{\sigma}\;|\; w\in W,\;\sigma\in N\}$ with partial order
defined by inclusion.  The {\em Davis complex}
\cite{Davis-annals} associated to $(W,S)$ is the geometric realization
of $W\cS$ and is usually denoted by $\Sigma$. It is a contractible
complex on which $W$ acts properly with finite stabilizers.   

In \cite{CD}, Charney and Davis generalized the reciprocity for affine
Coxeter groups, showing that that $\gamma(1/t)=\pm\gamma(t)$
whenever the the nerve $N$ is an {\em Eulerian sphere}, i.e., has the property
that the link of every simplex $\sigma$ has the Euler characteristic
of a sphere (of the appropriate dimension).  More recently, Dymara  
\cite{Dymara} has shown that for any positive real number $q$, one has 
$\bchi^q_2(\Sigma)=1/\gamma(q)$ where $\bchi^q_2(\Sigma)$ 
denotes the ``$q$-weighted'' $\ell_2$-Euler characteristic of $\Sigma$.
When $\Sigma$ is a (generalized homology) manifold, he  also obtains the
reciprocity formula $\gamma(1/q)=\pm\gamma(q)$ as a 
consequence of Poincar\'{e} duality for weighted $\ell_2$-cohomology.

It was observed by Serre that for Coxeter groups, there is a natural
extension of the growth series to a multivariable series $\gamma(\bt)$
where $\bt$ is a tuple of indeterminants, one for each conjugacy class
of generators in $S$.  The $\ell_2$-Euler characteristic and the
reciprocity formulas above also have natural interpretations in terms
of this multivariable series $\gamma(\bt)$ (see \cite{DDJO, Davis-book}).

{\bf Rationality Results.}
Although explicit rational formulas for the growth series of a Coxeter
group were known to Steinberg \cite{Steinberg}, rationality itself
follows on general principles from the fact that Coxeter groups are
automatic (the relevant property of an automatic group is that it has
a normal form that is recognized by a finite state automaton). The
proof of automaticity for Coxeter groups was completed by Brink and
Howlett in \cite{BH}, but in the special case of right-angled Coxeter 
groups, automaticity (in fact, bi-automaticity) also follows from the
more recent results of Niblo and Reeves \cite{NR} and the fact that
the Davis complex is a CAT($0$) cube complex.  The normal form  for
groups acting on cube complexes turns out to be different from the
normal form recognized by the Brink/Howlett automaton, and is in some
sense more canonical (see Section~\ref{s:cox-rec}, below).  This canonical
normal form for right-angled Coxeter groups is a special case of a
normal form that makes sense for any Coxeter group.  For each element
$w\in W$, this normal form specifies a unique product representation
of the form  $w=w_{\sigma_n}\cdots w_{\sigma_1}$ where 
each letter $w_{\sigma_i}$ is an element of longest length in
the (finite) parabolic subgroup $W_{\sigma_i}$.  The left-most
letter $w_{\sigma}$ appearing in the representation for $w$ has the
property that $\sigma$ is the largest subset such that
\[|w|=|w_{\sigma}|+|w_{\sigma}^{-1}w|.\]
For this reason, we call this normal form the {\em (left) greedy
  normal form} and denote it by $\cL(W,S)$. 

The main purpose of this article is to describe rationality and
reciprocity theorems for $\cL(W,S)$.  To avoid confusion between words in
$\cL(W,S)$ and group elements in $W$, we let $\cA^*$ denote the free monoid
on the set $\cA=\{\sigma\in N \;|\; \sigma\neq\emptyset\}$ and regard
$\cL(W,S)$ as a subset of $\cA^*$ (in other words, $\cL(W,S)$ is a {\em language}
with respect to the {\em alphabet} $\cA$).  Given any language
$\cL\subset\cA^*$, we let $\chi_{\cL}$ denote the {\em characteristic
  series} defined by 
\[\chi_{\cL}=\sum_{\alpha\in\cL}\alpha.\]
We regard it as a formal power series over $\bbQ$ (with the elements of
$\cA$ representing noncommuting formal variables). 
Such a series is {\em rational} if it can be obtained from a finite
set of polynomials using a finite sequence of additions,
multiplications and quasi-inversions (see Section~\ref{s:rational}).
It is a classical result in automata theory that $\cL$ is a regular language
(i.e., recognized by a finite state automaton) if and only if $\chi_{\cL}$
is rational (see, e.g., \cite{Schutzenberger}).  Thus, it follows from
Niblo and Reeves' proof of automaticity for groups acting on CAT($0$) 
cube complexes, that in the case of a right-angled Coxeter group the
characteristic series for the greedy normal form $\cL(W,S)$ is
rational.  More generally, we prove the following.    

\begin{Theorem} \label{thm:A}
Let $W$ be any Coxeter group and let $\cL=\cL(W,S)$ be its greedy normal
form.  Then the characteristic series $\chi_{\cL}$ is rational.
\end{Theorem}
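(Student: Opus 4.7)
The plan is to prove that $\cL=\cL(W,S)$ is a regular language over the alphabet $\cA$; by the Kleene--Sch\"utzenberger correspondence recalled in Section~\ref{s:rational}, this yields rationality of $\chi_\cL$. I begin with a descent-theoretic characterization of $\cL$. The classical fact that the left descent set $D_L(w):=\{s\in S:sw<w\}$ of any $w\in W$ is spherical, combined with the equivalence $w_\sigma\leq w$ in left weak order iff $\sigma\subseteq D_L(w)$, shows that the greedy factor of $w$ is exactly $D_L(w)$. Iterating, a word $\sigma_n\sigma_{n-1}\cdots\sigma_1\in\cA^*$ lies in $\cL$ if and only if, setting $w_i:=w_{\sigma_i}\cdots w_{\sigma_1}$, one has $D_L(w_i)=\sigma_i$ for every $i$.

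With this characterization in hand, I would construct a finite state automaton that reads the candidate word from right to left and verifies the above conditions online. The key input is Brink and Howlett's theorem from \cite{BH} that the set $E$ of elementary (``small'') positive roots of any Coxeter system is finite. For each $w\in W$, the subset $E(w):=\{\alpha\in E : w^{-1}\alpha<0\}$ serves as a finite summary from which one can both read off $D_L(w)$ (since every simple root is elementary) and compute $E(w_\sigma w)$ for any $\sigma\in\cA$, by iterating the Brink--Howlett simple-reflection transition along a reduced expression for $w_\sigma$. The automaton has states the finitely many possible summaries, initial state $E(1)=\emptyset$, every state accepting, and a $\sigma$-transition from $E$ to $E(w_\sigma w)$ precisely when $D_L(w_\sigma w)=\sigma$. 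By the characterization above, it recognizes exactly $\cL$.

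The main obstacle is the transition analysis: the Brink--Howlett automaton is typically phrased for right multiplication by \emph{simple} generators in $S^*$, whereas here one needs left multiplication by the longest elements $w_\sigma$ of finite parabolic subgroups, and one must verify the strict equality $D_L(w_\sigma w)=\sigma$ rather than merely the containment $\sigma\subseteq D_L(w_\sigma w)$ that comes for free whenever the product $w_\sigma w$ is reduced. The first difficulty can be resolved by passing to inverses (trading the left greedy normal form of $w$ for a right greedy normal form of $w^{-1}$) or by composing simple-reflection transitions through a chosen reduced expression for each $w_\sigma$; the second is automatic once $D_L(w)$ is exhibited as a function of the summary $E(w)$, since then $D_L(w_\sigma w)$ is determined by $(E(w),\sigma)$ and the test becomes a transition condition. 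Once these verifications are packaged, the finite automaton above recognizes $\cL$ and rationality of $\chi_\cL$ follows.
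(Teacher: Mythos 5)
Your proposal is correct and is essentially the paper's own argument in root-theoretic rather than geometric clothing: the paper's states are the regions of the Tits cone cut out by the finitely many minimal hyperplanes, which correspond exactly to your summaries $E(w)$ of elementary roots, and its key technical step (Lemma~\ref{lem:s-translate}, resting on Brink--Howlett finiteness and Casselman's transition property for minimal hyperplanes) is precisely the Brink--Howlett simple-reflection transition you iterate along a reduced word for $w_\sigma$. The paper likewise processes greedy words from the right, builds the acceptance test $\desc(w_\sigma w)=\sigma$ into the transitions, and concludes rationality from the representation $\chi_\cL=A(I-Q)^{-1}B$.
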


Since the greedy normal form consists of minimal length words,
substituting the appropriate commuting parameters for each letter
$\sigma\in\cA$ converts $\chi_{\cL}$ into the usual growth series
(single or multi-parameter) of the Coxeter group $W$ relative to the
generating set $S$.  Thus, rationality of 
the noncommutative series $\chi_{\cL}$ implies rationality for the
usual growth series $\gamma(t)$ and its multivariate versions.

The proof of Theorem~\ref{thm:A} (in Section~\ref{s:cox-rec}) implicitly constructs
an automaton that recognizes the greedy normal form.   The states of
the automaton are in bijection with certain regions (of the Tits'
cone) cut out by minimal hyperplanes, and in this respect the
automaton is similar to the canonical automaton described in
Bj\"{o}rner and Brenti \cite[page 120]{BB}. The latter, however, has the set of all reduced
expressions (as words in the standard generators) as its accepted
language, and thus does not have the uniqueness property -- a given
group element has many reduced expressions.   On the other hand, the
greedy normal form (defined over the larger alphabet $\cA$) does have
the uniqueness property, and our automaton appears to be new.    

{\bf Reciprocity Results.} 
To make sense of reciprocity in the context of multivariate
noncommutative power series, we work over the ring of formal
Laurent series.  In particular, in this ring each formal parameter
$\sigma\in\cA$ has an inverse $\sigma^{-1}$.  We use the 
fact that for any regular language, the characteristic series
$\chi_{\cL}$ has a representation of the form 
\[\chi_{\cL}=A(I-Q)^{-1}B\]
where $A$ is a row vector with entries in $\bbQ$, $B$ is a column
vector with entries in $\bbQ$, and $Q$ is a square matrix whose entries
are linear combinations of the variables $\sigma\in\cA$. (Roughly
speaking, with respect to the automaton 
that recognizes $\cL$, $A$ records the accept states, $B$ records the
start state, and $Q$ encodes the transition function.)  By replacing each
$\sigma$ appearing in $Q$  
with $\sigma^{-1}$, we obtain a new matrix $\bar{Q}$, and we define
the reciprocal $\chi_{\cL}^*$ to be 
\[\chi_{\cL}^*=A(I-\bar{Q})^{-1}B\]
{\em provided that the matrix $I-\bar{Q}$ is invertible} over the ring
of Laurent series.  It follows from Schutzenberger \cite{Schutzenberger} that
such a series, if it exists, does not depend on the choice of $A$,$B$,
and $Q$ (see Proposition~\ref{prop:recip}, below).  The main theorem of this paper is
the following.

\begin{Theorem}\label{thm:B}
Let $W$ be a right-angled Coxeter group and let $\cL=\cL(W,S)$ be its
greedy normal form.  If the nerve $N(W,S)$ is an Eulerian sphere of
dimension $k$, then $\chi_{\cL}^*$ exists and 
\[\chi^*_{\cL}=(-1)^{k+1}\chi_{\cL}.\]  
\end{Theorem}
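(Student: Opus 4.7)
The plan is to reduce the reciprocity to an algebraic identity on a matrix representation of $\chi_\cL$, using the Eulerian sphere structure of $N$ to produce both the sign and the required duality.

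First, I would make $\cL(W,S)$ explicit in the right-angled case. Since every finite right-angled parabolic $W_\sigma$ is an elementary abelian 2-group with $w_\sigma=\prod_{s\in\sigma}s$ a product of commuting generators, a sequence $(\sigma_n,\ldots,\sigma_1)$ of nonempty simplices of $N$ is greedy iff a local ``maximal descent'' condition holds between each consecutive pair: every $s\in\sigma_{i-1}$ that commutes with all of $\sigma_i$ must itself belong to $\sigma_i$.  This local condition lets me index the states of the automaton from Theorem~\ref{thm:A} by simplices of $N$ together with an initial state, producing an explicit representation $\chi_\cL=A(I-Q)^{-1}B$ in which $Q$ is a simplex-indexed matrix over $\cA$.

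Second, I would derive a Serre-type inclusion--exclusion identity for $\chi_\cL$.  Using M\"obius inversion over the simplex poset of $N$, with contributions coming from each link $\Lk(\tau,N)$, I expect to obtain an identity
\[\chi_\cL \cdot \Phi = \Psi,\]
where $\Phi$ and $\Psi$ are \emph{finite} alternating sums over simplices of $N$, with coefficients built from products of the noncommutative variables $\sigma\in\cA$.  This is meant as the noncommutative analogue of Serre's formula $1/\gamma(t)=\sum_{\sigma\in N\cup\{\emptyset\}}(-1)^{|\sigma|}/\gamma_\sigma(t)$, written over the free monoid $\cA^*$ rather than the commutative polynomial ring.

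Finally, I would apply the substitution $\sigma\mapsto\sigma^{-1}$ and invoke the Eulerian sphere hypothesis.  The numerical identity $\chi(\Lk(\tau,N))=1+(-1)^{k-|\tau|}$, valid for every simplex $\tau$ of an Eulerian $k$-sphere, should upgrade (after sign bookkeeping indexed by $|\tau|$) to algebraic identities $\Phi^*=(-1)^{k+1}\Phi$ and $\Psi^*=(-1)^{k+1}\Psi$ in the Laurent ring.  These identities then show simultaneously that $I-\bar Q$ is invertible, so that $\chi_\cL^*$ is well-defined, and that $\chi_\cL^*=(-1)^{k+1}\chi_\cL$.

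The main obstacle is the second step: writing down the noncommutative Serre-type identity precisely and checking that both sides transform cleanly under the reciprocal.  The right-angled hypothesis helps considerably, since each $w_\sigma$ is a product of commuting generators, so the variables $\sigma\in\cA$ interact only through the combinatorics of $N$; without this, the sign in the reciprocity is unlikely to be uniform.  One subtle point is ensuring that $\Phi$ and $\Psi$ are genuine noncommutative \emph{polynomials} (finite sums) rather than power series, so that the reciprocal substitution is defined intrinsically, before any matrix inversion, and so that the passage from the link-by-link Eulerian identity to the global self-duality of $\Phi$ and $\Psi$ is a legitimate algebraic manipulation.
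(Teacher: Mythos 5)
Your step 1 is essentially the paper's own starting point: for a right-angled group the minimal hyperplanes are exactly the fundamental ones, so the states of the automaton from Theorem~\ref{thm:A} are indexed by the simplices of $N$ (plus $\emptyset$), and the transition $\tau\to\sigma$ is permitted precisely when $\St(\bsigma)\cap\btau=\emptyset$. Note, though, that your local condition is stated incorrectly: a letter $s\in\sigma_{i-1}$ that also lies in $\sigma_i$ violates greediness (the two occurrences of $s$ cancel), so the right condition is $\sigma_{i-1}\cap\St(\bsigma_i)=\emptyset$, which excludes both $s\in\sigma_i$ and $s$ adjacent to all of $\sigma_i$.

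The serious gap is step 2. You posit, but do not construct, a noncommutative Serre/Cartier--Foata identity $\chi_{\cL}\cdot\Phi=\Psi$ with $\Phi,\Psi$ genuine polynomials in $\bbQ\<\cA\>$. Such identities hold in the commutative ring $\bbQ[[\bt]]$ (this is how Charney--Davis prove reciprocity) and in rings of series over the monoid itself, where commutation relations are imposed; but $\chi_{\cL}$ lives in the free ring $\bbQ\<\<\cA\>\>$, where a rational series need not be a quotient of two polynomials. Already for $D_\infty$ (with $\cA=\{x,y\}$ and $\chi=1+x+y+xy+yx+\cdots$) one checks that $\chi(1-x-y)$, $\chi(1-x-y+xy+yx)$, etc., fail to be polynomials, and the minimal representation has genuinely nested inverses. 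So the identity on which your whole argument rests is not available as stated. The paper's substitute for it is the factorization $Q=D_0J$, where $J$ is the anti-incidence matrix with $J_{\sigma,\tau}=(-1)^{\dim\bsigma}$ when $\St(\bsigma)\cap\btau=\emptyset$ and $0$ otherwise, together with the theorem that $J^2=\Id$ when $N$ is an Eulerian flag complex; that matrix involution (proved via Euler-characteristic computations for the subcomplexes $E(\bsigma)\cap B(\btau)$) is the real content of the reciprocity, and your plan contains no analogue of it. Two further problems: (i) even granting $\chi\Phi=\Psi$, the signs you propose, $\Phi^*=(-1)^{k+1}\Phi$ and $\Psi^*=(-1)^{k+1}\Psi$, cancel and would give $\chi^*=\chi$ for every $k$, contradicting $\chi^*=-\chi$ for $D_\infty$ (where $k=0$); and (ii) $\chi^*$ is defined through a representation $(A,Q,B)$ via Proposition~\ref{prop:recip}, not by substitution into a rational expression, so you would still have to reconcile $\bar{\Psi}\,\bar{\Phi}^{-1}$ with $A(I-\bar{Q})^{-1}B$.
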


Again, appropriate substitutions for the letters $\sigma\in\cA$ yield
all of the known reciprocity formulas {\em in the right-angled
  cased}.  In fact, in the generality we work, we also obtain a new
reciprocity formula for the {\em complete growth series} of a
right-angled Coxeter group.  By definition, the complete growth series
of $G$ with respect to a generating set $S$ is the series  
\[\tilde{\gamma}(t)=\sum_{g\in G}gt^{|g|},\]
a power series in $t$ with coefficients in the (noncommutative) group
ring $\bbQ[G]$.  The complete growth series has been studied by
several authors \cite{Liardet,GN,GdlH}.  Rationality for hyperbolic groups and
formulas for surface groups were obtained by Grigorchuk and Nagnibeda in
\cite{GN}.  For Coxeter groups, rationality of the complete growth
series is known \cite{Changey,Mamaghani}, but also follows from our
Theorem~\ref{thm:A} and the substitutions $\sigma\rightarrow
w_{\sigma}t^{|\sigma|}$ (for all $\sigma\in\cA$) into $\chi_{\cL}$.
More notable, however, is the fact that this same substitution yields
the reciprocity formula
\[\tilde{\gamma}(1/t)=(-1)^{k+1}\tilde{\gamma}(t)\]
for the complete growth series of a right-angled Coxeter group whose
nerve is an Eulerian $k$-sphere. 

Theorem~\ref{thm:B} fails to hold in general for non right-angled
Coxeter groups with Eulerian nerve (even though the usual growth
series does satisfy reciprocity \cite{CD}).  In these cases, it would
be interesting to find an identity involving $\chi_{\cL}$ that does
specialize to the usual reciprocity formulas under an appropriate
substitution. 

The proof of Theorem~\ref{thm:B} reduces to showing that for a
simplicial complex $K$ a certain matrix $J$ is an involution when
$K$ is Eulerian.  If $\cP(K)$ denotes the poset of faces of $K$, then
$J$ is a $\cP(K)\times\cP(K)$ matrix that takes nonzero values $\pm 1$
if and only if the corresponding pair of simplices are sufficiently
far apart in $K$.  For this reason we call $J$ the {\em anti-incidence
  matrix} for $K$.  The proof that $J$ is an involution is fairly
technical, but may be of independent interest to combinatorialists.
For this reason, we include it as a separate section. 

{\bf Organization.}
Section~\ref{s:series} introduces notation and standard facts
for noncommutative series.  Our reciprocity formulas require working
with formal inverses of generators, so we work over the ring of formal
noncommutative Laurent series.   Sections~\ref{s:rational} and
\ref{s:reciprocal} discusses rationality and reciprocity for formal
series.  In Section~\ref{s:coxeter} we describe the greedy normal form
for a Coxeter group.  In Section~\ref{s:cox-rat} we show that the
characteristic series for the greedy normal form is rational,
essentially by describing a finite state automaton that recognizes the
language $\cL(W,S)$.  Section~\ref{s:Eulerian} discusses some
properties of Eulerian simplicial complexes, the most important of
which being that the anti-incidence matrix is an involution.  In
Section~\ref{s:cox-rec}, we combine the rationality results and
Eulerian complex results to prove our reciprocity theorem. Finally, in
Section~\ref{s:growthseries}, we apply our main theorems to obtain
rationality and reciprocity results for growth series of Coxeter
groups relative to both the standard generating set $S$ and the larger
generating set $A=\{w_{\sigma}\;|\;\sigma\in\cA\}$.

\section{Noncommutative formal series}\label{s:series}

In this section, we recall some basic facts from the theory of
non-commutative series.    

Let $\cA$ be a finite set, called the {\em alphabet}.  We let $\cA^+$
and $\cA^-$ denote two copies of $\cA$, and let $\cA^{\pm}$ denote the
disjoint union $\cA^+\cup \cA^-$.  For each element $ a\in\cA$,
we denote the corresponding elements in $\cA^+$ and $\cA^-$ by $a$
and $a^{-1}$, respectively.  We let $\cA^*$ (respectively,
$(\cA^{\pm})^*$) denote the free monoid generated by $\cA$ (resp.,
$\cA^{\pm}$) consisting of {\em words} over $\cA$ (resp., over
$\cA^{\pm}$).  A word over $\cA^{\pm}$ is {\em reduced} if it is of the form 
\[\alpha=a_1^{m_1}\cdots a_n^{m_n}\]
where $a_i\in\cA$, $m_i\in\{\pm 1\}$, and there are no consecutive
pairs of the form $aa^{-1}$ or $a^{-1}a$
appearing in the expression.  We let $F(\cA)$ denote the set of
reduced words over $(\cA)^{\pm}$, and endow it with the usual
multiplication to form the free group over $\cA$.  We then define the
{\em order} of an element $\alpha=a_1^{m_1}\cdots a_n^{m_n}\in
F(\cA)$, which we denote by $\ord(\alpha)$, to be the total number of
$-1$'s appearing as exponents, that is if $\alpha=a_1^{m_1}\cdots
a_n^{m_n}$, then $\ord(\alpha)=\card\{i\:|\;
m_i=-1\}$.  In particular, $\cA^*$ can be identified with the
submonoid of $F(\cA)$ consisting of all elements $\alpha$ such that
$\ord(\alpha)\geq 0$.  

We work over the rational numbers $\bbQ$, and consider all
formal series $\lambda$ of the form 
\[\lambda=\sum_{\alpha\in F(\cA)}r_{\alpha} \alpha.\]
where $r_{\alpha}\in\bbQ$.  The element $r_{\alpha}$ is called the {\em coefficient}
of $\alpha$ in $\lambda$ and will also be denoted by
$\<\lambda,\alpha\>$.  We define the {\em support} of a series $\lambda$ to be the set  
\[\supp(\lambda)=\{\alpha\in F(\cA)\;|\; \<\lambda,\alpha\>\neq 0\},\]
and we define the {\em order} of $\lambda$ to be 
\[\ord(\lambda)=\inf\{\ord(\alpha)\;|\;\alpha\in\supp(\lambda)\}.\]

The sum of two series is defined in the usual way
\[\lambda_1+\lambda_2=\sum_{\alpha\in
  F(\cA)}(\<\lambda_1,\alpha\>+\<\lambda_2,\alpha\>)\alpha\]
but the formal product 
\begin{equation}\label{eq:product} 
\lambda_1\lambda_2=\sum_{\alpha\in
  F(\cA)}\left(\sum_{\alpha_1\alpha_2=\alpha}\<\lambda_1,\alpha_1\>\<\lambda_2,\alpha_2\>\right)\alpha
\end{equation}
is not always well-defined (the inner sum might be infinite).  
% For
%example, if $\cA=\{x,y\}$, then the coefficient of $1$ in the product  
%\[(xyx^{-1}y^{-1}+ xyx^{-1}y^{-1}xyx^{-1}y^{-1}+ \cdots)
%(yxy^{-1}x^{-1}+ yxy^{-1}x^{-1}yxy^{-1}x^{-1}+\cdots )\]
%would be an infinite sum.  
To remedy this, we define a {\em Laurent series over $\cA$} to be any
formal series $\lambda$ such that 
$\ord(\lambda)>-\infty$.  If $\lambda_1$ and $\lambda_2$ are both
Laurent series, then the inner sum in (\ref{eq:product}) will always
be a finite sum and the resulting product $\lambda_1\lambda_2$ will
also have finite order.  Hence the set of formal Laurent series forms
an (associative) ring with unit. We denote this ring by $\bbQ((\cA))$.  

A Laurent series $\lambda$ is called a {\em power series} if
$\supp(\lambda)\subset \cA^*$, a {\em Laurent polynomial} if
$\supp(\lambda)$ is finite, and a {\em   polynomial} if it is both a
Laurent polynomial {\em and} a power series.  The {\em degree} of a
polynomial $\lambda$ is the length of the longest word in 
$\supp(\lambda)$.  A polynomial of degree $1$ is called {\em linear},
and a polynomial with all terms of the same degree is called {\em
homogeneous}.  We let $\bbQ\<\<\cA\>\>$ (respectively, $\bbQ(\cA)$,
$\bbQ\<\cA\>$) denote the set of all power series (resp., Laurent
polynomials, polynomials).  All three of these sets are subrings of 
$\bbQ((\cA))$.

\section{Rational power series}\label{s:rational}

A power series $\lambda\in\bbQ\<\<\cA\>\>$ is {\em quasi-regular} if the
constant term is zero, i.e., if $1\not\in\supp(\lambda)$.  For a
quasi-regular element $\lambda$, we define its {\em quasi-inverse}
$\lambda^+$ by  
\[\lambda^+=\sum_{n=1}^{\infty}\lambda^n=\lim_{N\rightarrow\infty}\sum_{n=1}^N\lambda^N.\]
Note that if $\lambda$ is quasi-regular, then $1-\lambda$ is
invertible and $(1-\lambda)^{-1}=1+\lambda^+$.  A power series
$\lambda\in\bbQ\<\<\cA\>\>$ is {\em rational} if it can be obtained from
a finite set of polynomials by a finite sequence of additions,
multiplications, and quasi-inversions.    

The notation and terminology above also extends to vectors and
matrices.  Given a ring $R$, we let $R^{m\times n}$ denote the set of
$m\times n$ matrices with entries in $R$.  As in the case for
quasi-regular power series, if a matrix $Q\in\bbQ\<\<\cA\>\>^{n\times n}$ has all
quasi-regular entries, then $I-Q$ is invertible (over $\bbQ\<\<\cA\>\>$)
with inverse given by 
\[(I-Q)^{-1}=I+Q^+=I+Q+Q^2+Q^3+\cdots.\]

Important examples of power series come from the theory of formal
languages.  By definition a {\em language} over $\cA$ is any subset
$\cL\subset \cA^*$.  Given a language over $\cA$, we define its {\em
  characteristic} series $\chi_{\cL}\in\bbQ\<\<\cA\>\>$ by 
\[\chi_{\cL}=\sum_{\alpha\in\cL}\alpha.\]
A language $\cL$ is {\em regular} if it is the language accepted by a
finite state automaton.  This turns out to be equivalent to its
characteristic series $\chi_{\cL}$ being rational.  In fact, more
generally, we have the following standard characterization of rational power
series, due to Schutzenberger \cite{Schutzenberger,SS}.

\begin{theorem}\label{thm:psrep}
A power series $\lambda\in\bbQ\<\<\cA\>\>$ is rational if and only if
there exist a matrix $Q\in\bbQ\<\cA\>^{n\times n}$ with linear
homogeneous entries and vectors $A\in\bbQ^{1\times n}$ and
$B\in\bbQ^{n\times 1}$ such that 
\[\lambda=A(I-Q)^{-1}B.\]
\end{theorem}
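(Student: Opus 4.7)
The plan is to prove both directions of this classical (Schutzenberger) equivalence by induction.

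For the \emph{if} direction, suppose $\lambda = A(I-Q)^{-1}B$ with $Q$ linear homogeneous. Every entry of $Q$ is quasi-regular, so $(I-Q)^{-1} = \sum_{k\geq 0} Q^k$ is well-defined entry-wise in $\bbQ\<\<\cA\>\>$. I would show each entry of $(I-Q)^{-1}$ is rational by induction on $n$. The base case $n=1$ is the very definition of quasi-inverse. For the inductive step, block-decompose $Q = \begin{pmatrix} q & u \\ v & R \end{pmatrix}$ with $R$ of size $(n-1)\times(n-1)$, and use the Schur-complement formula to express the entries of $(I-Q)^{-1}$ as sums, products, and quasi-inverses of $q$, $u$, $v$, and the (inductively rational) entries of $(I-R)^{-1}$; the one quasi-inversion used, that of the Schur complement $q + u(I-R)^{-1}v$, is legitimate because the latter is quasi-regular. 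Then $\lambda$ is a $\bbQ$-linear combination of rational entries, hence rational.

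For the \emph{only if} direction, I would show that the class of series admitting a representation $A(I-Q)^{-1}B$ with $Q$ linear homogeneous contains the polynomials and is closed under sum, product, and quasi-inverse. For a monomial $a_1 \cdots a_k$, take $Q$ to be the $(k+1)\times(k+1)$ matrix with $a_i$ in position $(i,i+1)$ and zeros elsewhere, $A = e_1^T$, and $B = e_{k+1}$; arbitrary polynomials then follow from sum closure, which uses a block-diagonal $Q$ with row vectors concatenated and column vectors stacked.

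The product case is the most delicate because the naive ``glue two automata by an $\epsilon$-transition'' construction introduces a constant block $B_1 A_2$ that violates linear homogeneity. I would instead use the block-upper-triangular construction whose off-diagonal entry is $\sum_{a \in \cA}\bigl(B_1 A_2 \mu_2(a) + \mu_1(a) B_1 A_2\bigr)\, a$, where $\mu_i(a)$ denotes the coefficient of $a$ in $Q_i$. A direct induction on $|w|$ shows that the top-right block of the resulting block-matrix $\mu(w)$ equals the convolution $\sum_{uv=w} \mu_1(u) B_1 A_2 \mu_2(v)$, so taking $A = (A_1, 0)$ and $B = (0, B_2)^T$ recovers $\lambda_1 \lambda_2$. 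Closure under quasi-inverse uses that $\<\lambda, 1\> = 0$ translates, via $(I-Q)^{-1}|_1 = I$, into the scalar identity $AB = 0$. A geometric-series computation gives $\lambda^+ = A(I - Q - BA)^{-1}B$; while $Q + BA$ carries constant entries, the nilpotency $(BA)^2 = B(AB)A = 0$ yields $(I-BA)^{-1} = I + BA$, so one can factor $I - Q - BA = (I - BA)\bigl(I - (I+BA)Q\bigr)$. Combined with $(I+BA)B = B$ (again from $AB=0$), this gives $\lambda^+ = A(I - Q'')^{-1}B$ with $Q'' = (I+BA)Q$ linear homogeneous.

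The main obstacle is exactly this preservation of linear homogeneity under the inductive constructions: the natural block manipulations for product and quasi-inverse want to introduce degree-zero entries, and only through small but essential algebraic identities — the convolution formula for the product off-diagonal block, and the nilpotency $(BA)^2 = 0$ that falls out of quasi-regularity — does one stay inside the hypothesized class. Without these, the proof would produce $Q$ merely quasi-regular, and a separate state-splitting argument would be needed to reduce to the linear-homogeneous form stated in the theorem.
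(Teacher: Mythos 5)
The paper itself does not prove this theorem: it simply cites Theorems~1.2 and~1.4 of Salomaa--Soittola, so your from-scratch argument is necessarily a different route. Most of it is sound. The Schur-complement induction for the ``if'' direction works (the complement $q+u(I-R)^{-1}v$ is indeed quasi-regular because $u$ and $v$ are homogeneous of degree one), the monomial and sum constructions are standard, and your quasi-inverse step is correct and nicely handles the homogeneity issue: the factorization $I-Q-BA=(I-BA)\bigl(I-(I+BA)Q\bigr)$, together with $(BA)^2=0$ and $(I+BA)B=B$, does yield $\lambda^+=A(I-Q'')^{-1}B$ with $Q''=(I+BA)Q$ linear homogeneous.

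The gap is in the product step. The claimed identity --- that the top-right block of $\mu(w)$ equals $\sum_{uv=w}\mu_1(u)B_1A_2\mu_2(v)$ --- is false. It fails at $w=1$ (the block is $0$, the sum is $B_1A_2$), and the induction double-counts. Writing $c=B_1A_2$, $N(a)=c\mu_2(a)+\mu_1(a)c$, and $P(w)$ for the top-right block, the recursion $P(wa)=P(w)\mu_2(a)+\mu_1(w)N(a)$ unwinds to
\[P(w)=\sum_{uav=w}\bigl(\mu_1(u)\,c\,\mu_2(av)+\mu_1(ua)\,c\,\mu_2(v)\bigr),\]
so a factorization $w=xy$ is counted once for each of its two parts that is nonempty: interior factorizations appear twice, the extreme ones once. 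Consequently $(A_1,0)\,\mu(w)\,(0,B_2)^T=2\<\lambda_1\lambda_2,w\>-\<\lambda_1,1\>\<\lambda_2,w\>-\<\lambda_2,1\>\<\lambda_1,w\>$, i.e.\ your representation recognizes $2\lambda_1\lambda_2-\<\lambda_1,1\>\lambda_2-\<\lambda_2,1\>\lambda_1$, not $\lambda_1\lambda_2$. The standard repair is to take the off-diagonal entry to be $\mu_1(a)c$ alone; then $P(w)=\sum_{uv=w,\,u\neq 1}\mu_1(u)c\mu_2(v)$ and the representation recognizes $\lambda_1\lambda_2-\<\lambda_1,1\>\lambda_2$, after which you recover $\lambda_1\lambda_2$ by adding the scalar multiple $\<\lambda_1,1\>\lambda_2$ using your sum construction. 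With that correction the argument goes through.
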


\begin{proof}{}
Any series of such a form $A(I+Q^+)B$ is rational by Theorem~1.2 in
\cite{SS}.  (The proof there shows this if $A=[1\;0\cdots\;0]$.  The
general case follows by a change of basis.)  Conversely, any rational
series is of this form by Theorem~1.4 in \cite{SS}.  
\end{proof}

\begin{remark}
Series of the form $A(I-Q)^{-1}B$ are usually called {\em
recognizable}, thus Schutzenberger's Theorem says that a series is rational
if and only if it is recognizable.  The connection to regular
languages is fairly straightforward.  Given a finite state automaton,
let $\cS$ denote the set of states and let
$\mu:\cA\times\cS\rightarrow\cS$ denote the transition function.
Let $A$ be the $1\times\cS$ row
vector whose entry corresponding to $s\in S$ is $1$ if $s$ is an
accept state and is $0$ otherwise.  Let $B$ be the  $\cS\times 1$ column
vector whose entry corresponding to $s\in\cS$ is $1$ if $s$ is the
start state and is $0$ otherwise.  And let $Q$ be the $\cS\times\cS$
matrix whose $s,t$-entry is $\sum_{\mu(a,t)=s}a$.  Then the
characteristic series for the language $\cL$ recognized by the
automaton is   
\[\chi_{\cL}=A(I-Q)^{-1}B=A(I+Q+Q^2+Q^3+\cdots)B.\]
See, e.g., \cite{Reutenauer} for more details.
\end{remark} 

Given a rational power series $\lambda$, a triple $(A,Q,B)$ as in
Theorem~\ref{thm:psrep} is called a {\em representation} for $\lambda$.
The {\em dimension} of a representation is the number $n$.  Two
representations $(A,Q,B)$ and $(A',Q',B')$ are {\em
  equivalent} if there exists an invertible matrix $P\in\bbQ^{n\times
  n}$ such that $A'=AP^{-1}$, $Q'=PQP^{-1}$, and $B'=PB$.  A
representation for $\lambda$ is {\em minimal} if it has them minimum
dimension among all representations for $\lambda$.   

\begin{example}\label{ex:square-free}
Let $\cA=\{x,y\}$ and let $\cL\subset\cA^*$ be the language consisting of
all square free words in $x$ and $y$.  Then the characteristic series
for $\cL$ is  
\[\chi_{\cL}=1+x+y+xy+yx+xyx+yxy+\cdots.\]
This series is rational since it can be written as 
\[\chi_{\cL}=A(I-Q)^{-1}B=A(I+Q+Q^2+\cdots)B\]
where 
\[A=\left[\begin{array}{ccc}1&1&1\end{array}\right],\hspace{.5in}
Q=\left[\begin{array}{ccc}0&0&0\\x&0&x\\y&y&0\end{array}\right],\hspace{.5in}
B=\left[\begin{array}{c}1\\0\\0\end{array}\right].\]
In other words, $(A,Q,B)$ is a representation for $\chi_{\cL}$.  Using
row reduction to invert $I-Q$, one obtains
\[(I-Q)^{-1}=\left[\begin{array}{ccc}
1&0&0\\
x+x(1-yx)^{-1}y(1+x)&1+x(1-yx)^{-1}y&x\\
(1-yx)^{-1}y(1+x)&(1-yx)^{-1}y&(1-yx)^{-1}\end{array}\right].\]
This gives a rational expression for the power series $\chi_{\cL}$:
\[\chi_{\cL}=A(I-Q)^{-1}B=1+x+x(1-yx)^{-1}y(1+x)+(1-yx)^{-1}y(1+x).\] 
\end{example}

\section{The reciprocal of a rational power series}\label{s:reciprocal}

The goal of section is to make sense (when possible) of the series
obtained from a rational power series by replacing all letters with
their inverses.  
 
Let $\lambda\mapsto\bar{\lambda}$ denote the involution on the set of
Laurent polynomials defined by replacing each letter $x\in \cA$
with $x^{-1}$ and each letter $x^{-1}$ with $x$.  Similarly, for any
matrix $M$ with Laurent polynomial entries, we define $\bar{M}$ to be
the same matrix with each entry $\lambda$ replaced by $\bar{\lambda}$.

Now suppose $\lambda$ is a {\em rational} power series, and that
$(A,Q,B)$ is a representation for $\lambda$.  Suppose further
that the matrix $I-\bar{Q}$ is invertible over the ring $\bbQ((\cA))$.
Then we obtain a new Laurent series   
\[\lambda^*(A,Q,B)=A(I-\bar{Q})^{-1} B.\]

\begin{proposition}\label{prop:recip}  
The series $\lambda^*(A,Q,B)$, if it exists, does not depend on the choice of
representation $(A,Q,B)$.   
\end{proposition}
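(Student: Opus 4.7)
The strategy is to reduce the proposition to a statement about representations of the zero series. Given two representations $(A,Q,B)$ and $(A',Q',B')$ of $\lambda$ with both $I-\bar Q$ and $I-\bar{Q'}$ invertible, I form the direct-sum representation
\[
\tilde A = [\,A \mid -A'\,],\qquad \tilde Q = \begin{pmatrix} Q & 0 \\ 0 & Q' \end{pmatrix},\qquad \tilde B = \begin{pmatrix} B \\ B' \end{pmatrix}.
\]
This represents $\lambda-\lambda = 0$, and since $\tilde Q$ is block-diagonal, $I - \tilde{\bar Q}$ is block-diagonal with two-sided inverse $\mathrm{diag}((I-\bar Q)^{-1},(I-\bar{Q'})^{-1})$. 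Expanding $\tilde A(I-\tilde{\bar Q})^{-1}\tilde B$ in blocks gives $A(I-\bar Q)^{-1}B - A'(I-\bar{Q'})^{-1}B'$, so the proposition reduces to showing: \emph{if $(A,Q,B)$ represents the zero series and $I-\bar Q$ is invertible, then $A(I-\bar Q)^{-1}B=0$.}

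To prove this reduced claim, write $Q = \sum_{a\in\cA} c^a \cdot a$ with $c^a\in\bbQ^{n\times n}$ and let $V := \Span_{\bbQ}\{c^wB : w\in\cA^*\}\subseteq\bbQ^n$ be the reachable subspace --- the smallest $\bbQ$-subspace containing $B$ and invariant under every $c^a$. Since $\lambda = 0$, each coefficient $Ac^wB$ of $\lambda$ vanishes, so $A$ annihilates all of $V$. In a basis of $\bbQ^n$ adapted to a decomposition $\bbQ^n = V\oplus V^c$, the vector $B$ has no $V^c$-component and $A$ has no $V$-component, and each $c^a$ (hence $Q$, $\bar Q$, and $I-\bar Q$) is block upper triangular. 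Granting momentarily that $M := (I-\bar Q)^{-1}$ is also block upper triangular, the block calculation
\[
A(I-\bar Q)^{-1}B = (0,\,A_{V^c})\begin{pmatrix}M_{V,V} & M_{V,V^c}\\ 0 & M_{V^c,V^c}\end{pmatrix}\begin{pmatrix}B_V\\0\end{pmatrix} = 0
\]
completes the proof.

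The main obstacle is verifying that $M$ inherits the block upper-triangular form of $N := I-\bar Q$. Comparing $(V^c, V)$-blocks in $NM = MN = I$ yields $(I-\bar Q_{V^c})M_{V^c,V} = 0$ and $M_{V^c,V}(I-\bar Q_V) = 0$; to conclude $M_{V^c,V}=0$ it suffices to know that the diagonal blocks $I-\bar Q_V$ and $I-\bar Q_{V^c}$ are two-sided invertible. I would establish this via a matrix Dedekind-finiteness argument: $\bbQ\<\<\cA\>\>$ is a noncommutative local ring, its unique maximal ideal $\mathfrak m$ (the quasi-regular series) satisfying $\bigcap_k\mathfrak m^k = 0$. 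An idempotent-lifting argument using $BA = I + X$ with $X = -X^2 \in M_n(\bigcap_k \mathfrak m^k)$ upgrades any one-sided matrix inverse over $\bbQ\<\<\cA\>\>$ to a two-sided one, so every matrix ring $M_n(\bbQ\<\<\cA\>\>)$ is Dedekind-finite, and this property passes to the Laurent completion $\bbQ((\cA))$ via its embedding in the Malcev--Neumann division ring of the bi-orderable free group $F(\cA)$. Applied to the left inverse $M_{V,V}$ of $I-\bar Q_V$ (read off from the $(V,V)$-block of $MN=I$), this yields two-sided invertibility of $I-\bar Q_V$; then right-multiplying $M_{V^c,V}(I-\bar Q_V) = 0$ by its inverse gives $M_{V^c,V}=0$, as required.
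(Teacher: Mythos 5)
Your proof is correct and takes a genuinely different route from the paper's. The paper invokes Schutzenberger's reduction theory (its Lemma~\ref{lem:standard form}): every representation is equivalent to a block-triangular one built around a minimal representation $(A_0,Q_0,B_0)$, and any two minimal representations are equivalent; one then computes $\lambda^*$ of the block form and sees that only the middle block survives. You instead reduce to the zero series by a direct-sum trick and then run a controllability argument: the reachable subspace $V$ is invariant, contains $B$, and is annihilated by $A$ when $\lambda=0$, so everything comes down to the inverse of $I-\bar Q$ respecting the block decomposition. What your approach buys is independence from the structure theory of minimal representations; what the paper's buys is a stronger conclusion (any existing $\lambda^*(A,Q,B)$ equals the reciprocal computed from a minimal representation), whereas yours shows exactly what the proposition asserts, namely agreement between any two representations whose reciprocals both exist.

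It is worth emphasizing that both arguments hinge on the same non-obvious fact about matrices over the noncommutative ring $\bbQ((\cA))$: that a block-triangular invertible matrix has a block-triangular inverse, equivalently that one-sided inverses of the diagonal blocks are two-sided. The paper assumes this silently when it writes $(I-\bar{\hat Q})^{-1}$ in block form with diagonal entries $(I-\bar Q_i)^{-1}$; you isolate it and correctly reduce it to Dedekind-finiteness of $M_n(\bbQ((\cA)))$. Your idempotent argument handles $\bbQ\<\<\cA\>\>$ completely, but note that what is actually needed is the Laurent ring, since that is where $(I-\bar Q)^{-1}$ lives; so the weight falls on your final claim that $\bbQ((\cA))$ embeds in a Malcev--Neumann division ring. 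That claim is true but not immediate, because well-orderedness of supports has to be checked: it follows from the observation that a subset of $F(\cA)$ with a bounded number of inverse letters per word has finite fibers over the abelianization $\bbZ^{\cA}$, and hence is well-ordered under a Magnus-type bi-order (lexicographic on lowest-degree terms). With that observation supplied, your proof is complete, and on this technical point it is in fact more careful than the paper's own.
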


For the proof of Proposition~\ref{prop:recip}, we use
the following result due to Schutzenberger \cite[Theorem
  III.B.1]{Schutzenberger}.     

\begin{lemma}\label{lem:standard form}
Let $\lambda$ be a rational series in $\bbQ\<\<\cA\>\>$, and let 
$(A_0,Q_0,B_0)$ be a minimal representation for $\lambda$.  Then any
other representation for $\lambda$ is equivalent to one of the form
$(\hat{A},\hat{Q},\hat{B})$ where these matrices have the block form
\[\hat{A}=\left[\begin{array}{ccc}* & A_0 & 0\end{array}\right]\;\hspace{.5in} 
\hat{Q}=\left[\begin{array}{ccc}Q_1&0&0\\ * &Q_0& 0\\ * & * & Q_2\end{array}\right]\;\hspace{.5in} 
\hat{B}=\left[\begin{array}{ccc}0\\B_0\\ *\end{array}\right].\]
In particular, any two minimal representations are equivalent.
\end{lemma}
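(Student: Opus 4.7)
The plan is to extract the block-triangular shape from two canonical $\mu$-invariant subspaces of the state space $\bbQ^n$. First I would reformulate the notion of a representation: writing $Q=\sum_{a\in\cA} a\,\mu(a)$ with $\mu(a)\in\bbQ^{n\times n}$ and extending $\mu$ multiplicatively to a monoid morphism $\mu\colon\cA^*\to\bbQ^{n\times n}$, the triple $(A,Q,B)$ is encoded by $(A,\mu,B)$ with $\<\lambda,\alpha\>=A\mu(\alpha)B$. The equivalence relation $A'=AP^{-1}$, $Q'=PQP^{-1}$, $B'=PB$ then becomes simultaneous change of basis by $P\in\mathrm{GL}_n(\bbQ)$, so the block-form claim amounts to exhibiting a direct-sum decomposition $\bbQ^n=V_1\oplus V_2\oplus V_3$ in which $V_2\oplus V_3$ and $V_3$ are $\mu$-invariant, $B$ lies in $V_2\oplus V_3$, $A$ vanishes on $V_3$, and the induced representation on $V_2$ is equivalent to $(A_0,Q_0,B_0)$.

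Next I would introduce the two canonical subspaces: the reachable subspace
\[
V_R=\Span\{\mu(\alpha)B:\alpha\in\cA^*\}\subseteq\bbQ^n
\]
and the unobservable subspace
\[
N=\{v\in\bbQ^n : A\mu(\alpha)v=0\text{ for every }\alpha\in\cA^*\}.
\]
Both are $\mu$-invariant: $V_R$ because $\mu(\beta)\mu(\alpha)B=\mu(\beta\alpha)B$, and $N$ because $A\mu(\alpha)(\mu(\beta)v)=A\mu(\alpha\beta)v=0$. Set $V_3=V_R\cap N$, choose any vector-space complement $V_2$ of $V_3$ inside $V_R$, and any complement $V_1$ of $V_R$ in $\bbQ^n$. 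In the resulting basis, $\hat Q$ is lower block-triangular: invariance of $V_R$ forces the entries sending $V_2\oplus V_3$ into $V_1$ to vanish, and invariance of $V_3$ forces the entries sending $V_3$ into $V_1\oplus V_2$ to vanish. The vector $\hat B$ lies in $V_R=V_2\oplus V_3$, so its $V_1$-block is zero, while $\hat A$ vanishes on $V_3\subseteq N\subseteq \ker A$ (take $\alpha$ empty in the definition of $N$). This is precisely the block shape asserted by the lemma, modulo identifying the middle block with $(A_0,Q_0,B_0)$.

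The final identification is the main obstacle and is where Schutzenberger's minimality theory must enter. The quotient $V_R/V_3=V_R/(V_R\cap N)$ inherits a representation $(\bar A,\bar\mu,\bar B)$ that is simultaneously \emph{reachable} (every class is a linear combination of $\bar\mu(\alpha)\bar B$'s, by construction of $V_R$) and \emph{observable} (its induced unobservable subspace is zero, by construction of $V_3$). A standard Hankel-matrix argument shows that any representation of $\lambda$ that is reachable and observable has dimension $n_0$ and is determined up to equivalence: the Hankel matrix $H(\alpha,\beta)=\<\lambda,\alpha\beta\>$ factors through the state space of any representation, so $\mathrm{rank}(H)\le n$ with equality exactly when the representation is reachable and observable; and two such representations are intertwined by the unique linear map sending $\mu_0(\alpha)B_0\mapsto \mu_1(\alpha)B_1$, well-defined because observability forces the kernels of these evaluation maps to coincide with the right-kernel of $H$. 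Thus the middle block $(A|_{V_2},\hat Q|_{V_2},B|_{V_2})$, canonically identified with $(\bar A,\bar\mu,\bar B)$ under $V_2\cong V_R/V_3$, is equivalent to $(A_0,Q_0,B_0)$; applying that intertwiner inside $V_2$ and extending by the identity on $V_1$ and $V_3$ preserves the block-triangular shape and produces the form asserted in the lemma. The ``in particular'' clause is then immediate: if $(A,Q,B)$ is itself minimal, then $n=n_0=\dim V_2$ forces $V_1=V_3=0$, so $(A,Q,B)$ coincides with its middle block and is equivalent to $(A_0,Q_0,B_0)$.
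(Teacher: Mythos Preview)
Your argument is correct and is essentially the classical reachable/observable (Kalman-type) decomposition that underlies Schutzenberger's Theorem~III.B.1; the paper does not supply its own proof of this lemma but simply cites that result, so your write-up is in fact a fleshed-out version of what the paper invokes. The only point worth making explicit is that the minimal representation $(A_0,Q_0,B_0)$ is itself reachable and observable (otherwise your construction would produce a strictly smaller one), which you use implicitly when identifying the middle block; once that is noted, everything goes through as you describe.
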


\begin{proof}{ of Proposition~\ref{prop:recip}}
Let $(A,Q,B)$ be a representation for $\lambda$ and let
$(A_0,Q_0,B_0)$ and $(\hat{A},\hat{Q},\hat{B})$ be as in
Lemma~\ref{lem:standard form}.  It follows easily from the definitions
of equivalent representations that
$\lambda^*(A,Q,B)=\lambda^*(\hat{A},\hat{Q},\hat{B})$.  Calculating
the latter, we have
\begin{eqnarray*}
\lambda^*(\hat{A},\hat{Q},\hat{B}) & = & 
\left[\begin{array}{ccc}* & A_0 & 0\end{array}\right]
\left(I-\overline{\left[\begin{array}{ccc}Q_1&0&0\\ * &Q_0& 0\\ * & * &
      Q_2\end{array}\right]}\right)^{-1}
\left[\begin{array}{ccc}0\\B_0\\ *\end{array}\right]\\
& = & 
\left[\begin{array}{ccc}* & A_0 & 0\end{array}\right]
\left[\begin{array}{ccc}I-\bar{Q}_1&0&0\\ \bar{*} &I-\bar{Q}_0& 0\\ \bar{*} & \bar{*} &
      I-\bar{Q}_2\end{array}\right]^{-1}
\left[\begin{array}{ccc}0\\B_0\\ *\end{array}\right]\\
& = & 
\left[\begin{array}{ccc}* & A_0 & 0\end{array}\right]
\left[\begin{array}{ccc}(I-\bar{Q}_1)^{-1}&0&0\\ ** &(I-\bar{Q}_0)^{-1}& 0\\ ** & ** &
      (I-\bar{Q}_2)^{-1}\end{array}\right]
\left[\begin{array}{ccc}0\\B_0\\ *\end{array}\right]\\
& = & 
A_0(I-\bar{Q}_0)^{-1}B_0\\
& = & 
\lambda^*(A_0,Q_0,B_0)
\end{eqnarray*}
Since any two minimal representations are equivalent, the result follows.
\end{proof}

Since the series $\lambda^*(A,Q,B)$ does not depend on the
representation, we denote it simply by $\lambda^*$ and call it the
{\em reciprocal} of $\lambda$.  

\begin{example}\label{ex:square-free-2}
Let $\chi$ be the characteristic series for the square-free words in
$x$ and $y$ as in Example~\ref{ex:square-free}.  Then $Q$ is the
matrix 
\[Q=\left[\begin{array}{ccc}0&0&0\\x&0&x\\y&y&0\end{array}\right],\]
so we have 
\begin{eqnarray*}
I-\bar{Q} & = & \left[\begin{array}{ccc}1&0&0\\-x^{-1}&1&-x^{-1}\\-y^{-1}&-y^{-1}&1\end{array}\right]\\
 & = &\left[\begin{array}{ccc}-1&0&0\\0&-x^{-1}&0\\0&0&-y^{-1}\end{array}\right]\left[\begin{array}{ccc}-1&0&0\\1&-x&1\\1&1&-y\end{array}\right]\\
& = &
 \left[\begin{array}{ccc}-1&0&0\\0&-x^{-1}&0\\0&0&-y^{-1}\end{array}\right]
\left(I-Q\right)
\left[\begin{array}{ccc}-1&0&0\\1&0&1\\1&1&0\end{array}\right]
\end{eqnarray*}
Inverting this matrix, we obtain
\[(I-\bar{Q})^{-1}= \left[\begin{array}{ccc}-1&0&0\\1&0&1\\1&1&0\end{array}\right]
\left(I-Q\right)^{-1}
\left[\begin{array}{ccc}-1&0&0\\0&-x&0\\0&0&-y\end{array}\right].\]
We then have 
\begin{eqnarray*}
\chi^* & = & \left[\begin{array}{ccc}1&1&1\end{array}\right](I-\bar{Q})^{-1}\left[\begin{array}{c}1\\0\\0\end{array}\right]\\
& = &
 \left[\begin{array}{ccc}1&1&1\end{array}\right]\left(I-Q\right)^{-1}\left[\begin{array}{c}-1\\0\\0\end{array}\right]\\
& = & -1-x-y-xy-yx-xyx-yxy-\cdots.
\end{eqnarray*}
\end{example}

\section{Coxeter groups and the greedy normal form}\label{s:coxeter}

Let $(W,S)$ be a Coxeter system as defined in \cite{Bourbaki}.  In
particular, $W$ is a group, $S$ is a generating set, and $W$ admits a
presentation of the form 
\[W=\langle S\;|\; \mbox{$(ss')^{m(s,s')}=1$ for all $s,s'\in S$}\rangle\]
where $m:S\times S\rightarrow\{1,2,\ldots,\infty\}$ is a symmetric
function such that $m(s,s')=1$ if and only if $s=s'$.  Any element
$w\in W$ has a {\em length}, denoted $|w|$, defined to be the minimal
number $n$ such that $w=s_1\cdots s_n$ with all $s_i\in S$. 

A subset $\sigma\subset S$ is called {\em spherical} if it is
either empty or generates a finite subgroup of $W$.  The {\em nerve}
of a Coxeter system $(W,S)$ is the set $ N(W,S)$
consisting of all spherical subsets of $S$.  We can regard $N=N(W,S)$ as
an abstract simplicial complex on the vertex set $S$ (so $\emptyset$
corresponds to the empty simplex).  For any nontrivial
$\sigma\in N(W,S)$, we let $W_{\sigma}$ denote the subgroup generated
by $\sigma$, and we let $w_{\sigma}$ denote the element of longest
length in $W_{\sigma}$.  Note that $w_{\sigma}$ is always an
involution (\cite[Ex.~22, p.~43]{Bourbaki}).  For any $w\in W$, we
define the {\em (left) descent set} of $w$ to be the set $\desc(w)=\{s\in S\;|\;
|sw|<|w|\}$.   A proof of the following can be found, for example in
\cite{Davis-book} (Lemma~4.7.2). 

\begin{proposition}\label{prop:longestelement}
Let $w$ be any element in $W$, and let $\sigma=\desc(w)$ be the left
descent set.  Then 
\begin{enumerate}
\item $\sigma$ is a spherical subset of $S$, and 
\item $w$ factors as $w=w_{\sigma}v$ where $|w|=|w_{\sigma}|+|v|$.
\end{enumerate}
\end{proposition}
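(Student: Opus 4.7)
The plan is to reduce the proposition to the standard minimal-coset-representative lemma for Coxeter groups, which states: for any subset $T\subseteq S$ and any $v\in W$, the condition $\desc(v)\cap T=\emptyset$ is equivalent to $|u'v|=|u'|+|v|$ for every $u'\in W_T$. This lemma (proved by induction on $|u'|$ using the Exchange Condition; see \cite{Bourbaki} Ch.~IV or the treatment in \cite{Davis-book}) is the workhorse, and most of the remaining work is a careful deployment of it.

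Given $w\in W$, set $\sigma=\desc(w)$. Although $\sigma$ is not yet known to be spherical, the left coset $W_\sigma w$ contains an element $v$ of minimum length (lengths are non-negative integers). By minimality, no $s\in\sigma$ can shorten $v$, so $\desc(v)\cap\sigma=\emptyset$. Set $u:=wv^{-1}\in W_\sigma$. The lemma applied to $u'=u$ gives $|w|=|uv|=|u|+|v|$, which is already the length-additive factorization demanded by (2) --- provided I can identify $u$ with $w_\sigma$.

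To identify $u$, I would show $\desc(u)=\sigma$. Fix $s\in\sigma=\desc(w)$, so $|sw|=|w|-1$. Since $su\in W_\sigma$ and $\desc(v)\cap\sigma=\emptyset$, the lemma applied to $u'=su$ yields $|sw|=|suv|=|su|+|v|$. Comparing with $|w|=|u|+|v|$ forces $|su|=|u|-1$, i.e., $s\in\desc(u)$. Thus $\sigma\subseteq\desc(u)$, and since $u\in W_\sigma$ it is standard that $\desc(u)\subseteq\sigma$, so $\desc(u)=\sigma$. An element of a Coxeter system all of whose simple generators are left descents must be the unique longest element of that system, and such an element exists only if the system is finite. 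This simultaneously establishes (1), that $\sigma$ is spherical, and the identification $u=w_\sigma$ needed for (2).

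The only real obstacle is securing the coset-representative lemma; after that the argument is essentially bookkeeping. The subtle step worth double-checking is the final claim that $\desc(u)=\sigma$ forces $W_\sigma$ to be finite and pins $u$ down as its longest element --- this is the standard descent-set characterization of the longest element (equivalently, the geometric statement that $u$ sends every simple root of $W_\sigma$ to a negative root, which is only possible when the root system of $W_\sigma$ is finite).
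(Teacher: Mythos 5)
Your argument is correct: the reduction to the minimal-coset-representative lemma, the computation showing $\desc(u)=\sigma$, and the descent-set characterization of the longest element together give a complete proof, and the two facts you lean on are genuinely standard. The paper itself does not prove this proposition but cites Lemma~4.7.2 of \cite{Davis-book}, whose proof is essentially the argument you give, so there is nothing further to reconcile.
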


Now let $\cA$ be the set of of all nontrivial spherical
subsets of $S$, which we call the {\em proper nerve} of $W$.  Given a word
$\alpha=\sigma_n\cdots\sigma_1\in{ \cA}^*$, let $\pi(\alpha)\in W$ be the 
product  $w_{\sigma_n}\cdots 
w_{\sigma_1}$  (and let $\pi(1)=1$).  Then $\pi:{ \cA}^*\rightarrow W$ is a
monoid homomorphism, and since all of the generators for $W$ are of
the form $w_{\sigma}$ with $\sigma=\{s\}$ a spherical subset, $\pi$ is
surjective.  

\begin{definition}
Let $(W,S)$ be a Coxeter system and let ${ \cA}$ be the proper nerve.  Then
the {\em left greedy normal form} for $W$ is the language $\cL\subset
{ \cA}^*$ consisting of all words 
$\alpha=\sigma_n\sigma_{n-1}\cdots\sigma_1\in { \cA}^*$  such that
$\sigma_i$ is the left descent set of
$w_{\sigma_i}w_{\sigma_{i-1}}\cdots w_{\sigma_1}$ (for $i=1,\ldots
n$).
\end{definition}

We then have the following.

\begin{proposition}
Let $\cL$ be the left greedy normal form for the Coxeter group
$(W,S)$.  Then $\pi$ restricts to a bijection $\cL\rightarrow W$.
Moreover, for any $\alpha=\sigma_n\cdots\sigma_1\in \cL$, we have
$|\pi(\alpha)|=|w_{\sigma_n}|+\cdots+|w_{\sigma_1}|$.  
\end{proposition}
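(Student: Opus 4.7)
The plan is to establish surjectivity together with the length formula by a single induction on $|w|$, constructing the greedy normal form from left to right using the descent-set factorization in Proposition~\ref{prop:longestelement}, and then deducing injectivity from the same recursive structure.

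For surjectivity and the length formula, the base case is $w=1$, which corresponds to the empty word. For the inductive step, given $w\neq 1$, I set $\sigma_n=\desc(w)$. Proposition~\ref{prop:longestelement} guarantees that $\sigma_n$ is a nonempty spherical subset (so $\sigma_n\in\cA$) and that $w$ factors as $w=w_{\sigma_n}v$ with $|w|=|w_{\sigma_n}|+|v|$. Since $|v|<|w|$, the induction hypothesis supplies a word $\alpha'=\sigma_{n-1}\cdots\sigma_1\in\cL$ with $\pi(\alpha')=v$ and $|v|=|w_{\sigma_{n-1}}|+\cdots+|w_{\sigma_1}|$. Setting $\alpha=\sigma_n\alpha'$, the defining conditions for $\alpha'\in\cL$ handle indices $i\leq n-1$, and the condition for $i=n$ reads $\sigma_n=\desc(w_{\sigma_n}v)=\desc(w)$, which holds by the choice of $\sigma_n$. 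Adding $|w_{\sigma_n}|$ to both sides of the inductive length equation gives the required formula.

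For injectivity, I proceed by induction on $|w|$ as well. Suppose $\alpha=\sigma_n\cdots\sigma_1$ and $\beta=\tau_m\cdots\tau_1$ both lie in $\cL$ with $\pi(\alpha)=\pi(\beta)=w$. The defining condition of $\cL$ applied to the leftmost letter forces $\sigma_n=\desc(w)=\tau_m$, so in particular $\sigma_n=\tau_m$. Cancelling $w_{\sigma_n}$ on the left yields $\pi(\sigma_{n-1}\cdots\sigma_1)=\pi(\tau_{m-1}\cdots\tau_1)=v$, and by the length formula already established, $|v|=|w|-|w_{\sigma_n}|<|w|$. The tails $\sigma_{n-1}\cdots\sigma_1$ and $\tau_{m-1}\cdots\tau_1$ are themselves in $\cL$, because the greedy condition for an index $i<n$ involves only the letters to the right of that index. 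The inductive hypothesis then forces $n-1=m-1$ and equality of the tails, completing the proof.

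The only real subtlety is checking that the greedy condition is genuinely inherited by tails (so that the induction is well-founded) and that peeling off the leftmost letter is unambiguous; both are immediate once one observes that the condition at index $i$ involves only the product $w_{\sigma_i}\cdots w_{\sigma_1}$, and that the leftmost letter is uniquely determined as $\desc(w)$ by definition. With these observations in hand the proof is essentially a direct translation of Proposition~\ref{prop:longestelement} into the language of $\cL$.
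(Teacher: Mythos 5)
Your proof is correct and follows essentially the same route as the paper's: both arguments induct on the length of the group element, use Proposition~\ref{prop:longestelement} to peel off the left descent set for surjectivity and the length formula, and deduce injectivity from the fact that the leftmost letter of a greedy word is forced to be $\desc(w)$. Your explicit remark that the greedy condition is inherited by tails is a detail the paper leaves implicit, but the substance is identical.
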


\begin{proof}{}
The equality $|\pi(\alpha)|=|w_{\sigma_n}|+\cdots+|w_{\sigma_1}|$
follows by induction on $n$ and
Proposition~\ref{prop:longestelement}.  

To prove that $\pi$ is
injective, note first that only the trivial word maps to the
identity.  Assume by induction that any two words (in $\cL$) that map
to a group element of length $<k$ are equal, and suppose that 
$\alpha=\sigma_n\cdots\sigma_1$ and
$\alpha'=\sigma_m'\cdots\sigma'_1$ in $\cL$ both map to the same group
element $w$ with $|w|=k$.  Then $\sigma_n$ and $\sigma_m'$ are both the left
descent set of $w$, hence are equal, so $\sigma_{n-1}\cdots\sigma_1$
and $\sigma_{m-1}'\cdots\sigma'_1$ both map to the same group element
(of length $<k$).  By induction, these must be the same word.  

For surjectivity (and the length property), assume by induction that
every $w\in W$ of length $<k$ has a greedy representation
$\alpha=\sigma_n\cdots\sigma_1\in \cL$ and that
$|\pi(\alpha)|=|w_{\sigma_n}|+\cdots+|w_{\sigma_1}|$.  Suppose $w\in
W$ is an element of length $k$ and let $\sigma$ be the left descent
set.  Then by Proposition~\ref{prop:longestelement}, $w=w_{\sigma}v$
and $|w|=|w_{\sigma}|+|v|$.  Since $|v|<k$, the result follows by
induction. 
\end{proof}

\section{Rationality of the greedy normal form}\label{s:cox-rat}

Let $(W,S)$ be a Coxeter system, let $\cL\subset{ \cA}^*$ be its left
greedy normal form, and let $\chi=\chi_{\cL}$ be the characteristic
series for $\cL$.  In this section we shall prove that $\chi$ is
a rational power series (equivalently, that $\cL$ is a regular language).  Our
argument essentially constructs a finite state automaton whose
accepted language is $\cL$.   The automaton is a modification of the
``canonical automaton'' described in \cite{BB}, which is an
automaton whose accepted language is the set of all reduced
expressions for elements of $W$.  The language of reduced expressions
is canonical in that it does not depend on an ordering of the
generators $S$, but it does not have the uniqueness property -- i.e.,
a given element $w\in W$ can have many reduced expressions.  The 
greedy normal form $\cL$ is also canonical, but {\em does} have the
uniqueness property.   The trade-off that makes this possible seems to
be that the set of reduced expressions is defined over the alphabet
$S$, whereas $\cL$ is defined over the larger alphabet ${ \cA}$.

To describe our representation for $\chi$, we use the {\em geometric
representation} for $(W,S)$.  Details can be found in any standard
reference on Coxeter groups (\cite{Bourbaki, Davis-book, Humphreys, BB}).  Let $V$ be a
real vector space of dimension $|S|$, and let $(h_s)$ be a basis for
the dual space $V^*$.  For each $s\in S$, let $v_s\in V$ be the unique
vector such that $h_{s'}(v_s)=-\cos(\pi/m(s,s'))$ for all
$s'\in S$.  Then $\rho_s(v)=v-2 h(v)v_s$ defines a linear reflection
$\rho_s:V\rightarrow V$ across the hyperplane $\{v\;|\;
h_s(v)=0\}$.  Moreover, $s\mapsto\rho_s$ defines a faithful
representation $\rho:W\rightarrow GL(V)$, called the {\em geometric
representation} for $W$. To simplify notation, we identify
$W$ with its image in $GL(V)$.

Let $C\subset V$ be the cone 
\[C=\{v\;|\; h_s(v)\geq 0\;\mbox{for all}\;\; s\in S\}\]
and let $U\subset V$ be the union of $W$-translates of $C$:
\[U=\bigcup_{w\in W}wC.\]
By a theorem of Tits, $U$ is a convex cone, on which $W$ acts
discretely with fundamental domain $C$.  Moreover, a subgroup of $W$
is finite if and only if it is contained in the stabilizer of a point
in the interior of $U$.  In particular, the finite parabolic subgroups
$W_{\sigma}$ are precisely the stabilizers of points in the
fundamental chamber $C$ that lie in the interior of $U$.  The cone $U$
is usually refered to as the {\em   Tits cone}, the translates $wC$ as
{\em   chambers}, and $W$ as the {\em fundamental chamber}.   Since
$C$ is a fundamental domain, $w\leftrightarrow wC$ defines a
bijection between $W$ and the set of chambers.   

For each $s\in S$, we define the {\em fundamental hyperplane}
$H_s$ by $H_s=\{v\in U\;|\;h_s(v)=0\}$, and we let  
$\cH$ denote the collection of all $W$-translates of the fundamental
hyperplanes: 
\[\cH=\{wH_s\;|\; w\in W, s\in S\}.\]
Note that if $H=wH_s$ is any hyperplane in $\cH$, it is the fixed point set
of the reflection $r=wsw^{-1}\in W$, and separates $U$ into two
connected components. We shall refer to any $H\in \cH$ as a {\em
hyperplane} in $U$ and to (the closure of) a connected
component of $U-H$ as a {\em halfspace}.   Since hyperplanes do
not intersect the interiors of chambers, any intersection of
halfspaces is a union of chambers.   For each $H\in\cH$,
let $H^+$ (respectively, $H^-$) denote the half-space bounded by $H$
that contains (resp., does not contain) the fundamental chamber $C$.
We recall the following simple criterion for determining which
fundamental halfspaces contain a given chamber (see \cite{Bourbaki}). 

\begin{proposition} \label{prop:basic}
The chamber $wC$ is contained in the fundamental
  halfspace $H_s^-$ if and only if $|sw|<|w|$ (i.e., if and only if
  $s\in\desc(w)$).
\end{proposition}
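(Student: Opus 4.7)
The plan is to establish this via the standard gallery/wall-crossing argument in the Tits cone. I will prove that $H_s$ is one of the hyperplanes in $\cH$ separating $C$ from $wC$ if and only if $s\in\desc(w)$, from which the proposition follows: since $C\subset H_s^+$ and the interior of any chamber is disjoint from $\cH$, the chamber $wC$ lies in $H_s^-$ precisely when $H_s$ separates $C$ from $wC$.

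First I would fix a reduced expression $w=s_1s_2\cdots s_n$ and form the gallery $C=C_0,\,C_1=s_1C,\,\ldots,\,C_n=s_1\cdots s_nC=wC$. Each consecutive pair $C_{i-1},C_i$ is the image of the adjacent pair $C,s_iC$ under $s_1\cdots s_{i-1}$, so $C_{i-1}$ and $C_i$ share the wall $H^{(i)}:=(s_1\cdots s_{i-1})H_{s_i}$. The key lemma, a standard consequence of the exchange condition (see \cite[Ch.~V, \S4]{Bourbaki}), is that when the expression is reduced the $n$ hyperplanes $H^{(i)}$ are distinct, and they are exactly the hyperplanes of $\cH$ separating $C$ from $wC$.

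Granting this lemma, the forward direction is straightforward: if $H_s=H^{(i)}$ for some $i$, then the reflection across $H^{(i)}$, namely $(s_1\cdots s_{i-1})s_i(s_1\cdots s_{i-1})^{-1}$, must equal $s$, and multiplying through gives $sw=s_1\cdots\widehat{s_i}\cdots s_n$, so $|sw|\le n-1<|w|$. For the converse, if $s\in\desc(w)$, pick a reduced expression $sw=t_1\cdots t_{n-1}$; then $w=s\cdot t_1\cdots t_{n-1}$ is a reduced expression for $w$ starting with $s$, and choosing this expression as our gallery makes $H^{(1)}=H_s$ one of the separating walls, so $wC\subset H_s^-$.

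The main obstacle is the standard lemma on reduced galleries (distinctness of the $H^{(i)}$ and their identification with the walls separating $C$ from $wC$). Its proof proceeds by induction on $n$ and relies on the strong exchange condition; I would treat it as a known result from \cite{Bourbaki} rather than reprove it, since the remainder of the argument is then a short combinatorial check.
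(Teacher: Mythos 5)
Your argument is correct. Note that the paper itself offers no proof of this proposition—it is stated as a recalled fact with a citation to Bourbaki—and your gallery/wall-crossing derivation (reducing to the standard lemma that for a reduced expression the crossed walls $H^{(i)}$ are distinct and are exactly the hyperplanes separating $C$ from $wC$) is precisely the classical argument from that source, so it matches the paper's intended justification.
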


For each spherical subset $\sigma\subset S$, let $\cH_{\sigma}$ denote
the collection of hyperplanes $\{wH_s\;|\;w\in W_{\sigma},\; s\in\sigma\}$,
and let $A_{\sigma}^+$ and $A_{\sigma}^-$ denote the subsets of $U$
defined by 
\[A_{\sigma}^+=\bigcap_{s\in\sigma}H_s^+\hspace{.5in}\mbox{and}\hspace{.5in}
A_{\sigma}^-=\bigcap_{s\in\sigma}H_s^-\]
(Figure~\ref{fig:Asigma}.)
\begin{figure}[ht]
\begin{center}
\psfrag{C}{$C$}
\psfrag{H1}{$H_{s}$}
\psfrag{H2}{$H_{t}$}
\psfrag{A12-}{$A_{st}^{-}$}
\psfrag{A12+}{$A_{st}^{+}$}
\includegraphics[scale = 1]{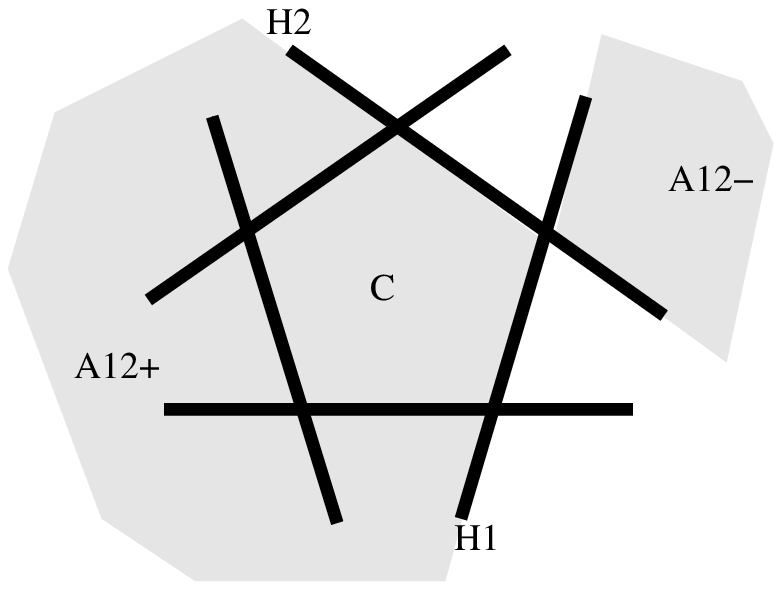}
\caption{\label{fig:Asigma}}
\end{center}
\end{figure}
In particular, the hyperplanes $\cH_{\sigma}$ are permuted by $W_{\sigma}$ and
$A_{\sigma}^+$ is a fundamental domain for the action of $W_{\sigma}$ on
this arrangement.  By definition of $w_{\sigma}$,
$|sw_{\sigma}|<|w_{\sigma}|$ for every $s\in\sigma$, hence by
Proposition~\ref{prop:basic} the chamber $w_{\sigma}C$  (and hence
$w_{\sigma}A_{\sigma}^+$) lies in $A_{\sigma}^-$.   

It also follows from Proposition~\ref{prop:basic} that
$A_{\sigma}^-\cap A_{\tau}^-=A_{\sigma\cup\tau}^-$ if
$\sigma\cup\tau$ is spherical (and is empty otherwise).  This leads to
a decomposition of the Tits cone cut out by the fundamental
hyperplanes; namely, we define $C_{\emptyset}=C$ and for
$\sigma\in { \cA}$ we
define $C_{\sigma}\subset U$ to be the subset
\[C_{\sigma}=\overline{A_{\sigma}^--\bigcup_{\sigma\subset \tau\in {
      \cA}}A_{\tau}^-}\]
(see Figure~\ref{fig:Csigma}).
\begin{figure}[ht]
\begin{center}
\psfrag{C}{$C_{\emptyset}$}
\psfrag{H1}{$H_{s}$}
\psfrag{H2}{$H_{t}$}
\psfrag{C1}{$C_{s}$}
\psfrag{C2}{$C_{t}$}
\psfrag{C12}{$C_{st}$}
\includegraphics[scale = 1]{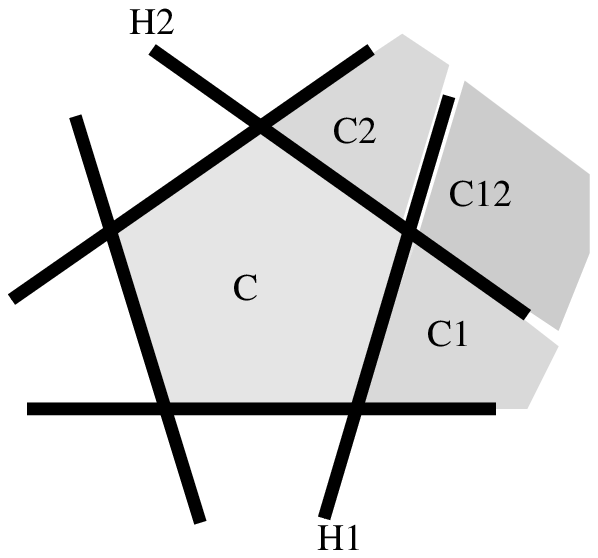}
\caption{\label{fig:Csigma}}
\end{center}
\end{figure}
Note that $C_{\sigma}$ can also be obtained by intersecting
$A_{\sigma}^-$ with the halfspaces $H_s^+$ where $s$ ranges over all
elements of $S$ such that $s\not\in\sigma$ and $\sigma\cup\{s\}$ is
spherical.  Thus, the $C_{\sigma}$'s are also intersections of
halfspaces, hence are unions of chambers.  More precisely, we have the
following decomposition. 

\begin{proposition} \label{prop:descent-test} The set $C_{\sigma}$ is the union of the
  chambers $wC$ such that $\desc(w)=\sigma$.
\end{proposition}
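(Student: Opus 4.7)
The plan is to work from the alternate description of $C_\sigma$ given just before the proposition: $C_\sigma = A_\sigma^- \cap \bigcap_s H_s^+$, where $s$ ranges over all $s \in S \setminus \sigma$ such that $\sigma \cup \{s\} \in \cA$. This description exhibits $C_\sigma$ as an intersection of closed halfspaces bounded by hyperplanes of $\cH$, and since no hyperplane in $\cH$ meets the interior of any chamber, a chamber $wC$ is contained in $C_\sigma$ if and only if it lies in each of the defining halfspaces.

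Next I would translate the halfspace conditions into descent-set conditions using Proposition~\ref{prop:basic}: $wC \subset H_s^-$ iff $s \in \desc(w)$, and consequently $wC \subset H_s^+$ iff $s \notin \desc(w)$. Therefore
\[
wC \subset A_\sigma^- \iff \sigma \subset \desc(w),
\]
and
\[
wC \subset C_\sigma \iff \sigma \subset \desc(w) \text{ and } s \notin \desc(w) \text{ for all } s \notin \sigma \text{ with } \sigma \cup \{s\} \in \cA.
\]

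It remains to show these two conditions together are equivalent to $\desc(w) = \sigma$. The implication $\desc(w) = \sigma \Rightarrow wC \subset C_\sigma$ is immediate from the displayed equivalences. For the converse, suppose $wC \subset C_\sigma$ and that, toward a contradiction, there exists $s \in \desc(w) \setminus \sigma$. By Proposition~\ref{prop:longestelement} the whole descent set $\desc(w)$ is spherical, so the subset $\sigma \cup \{s\} \subset \desc(w)$ is spherical as well, i.e.\ $\sigma \cup \{s\} \in \cA$. But then the second condition forces $s \notin \desc(w)$, a contradiction. Hence $\desc(w) = \sigma$.

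The only step requiring any care is justifying the alternate characterization of $C_\sigma$ (the closure in the original definition has to be shown to match the intersection of closed halfspaces), and confirming that whole chambers rather than just points are being assigned to a single $C_\sigma$ — but both follow from the standard Tits-cone fact that hyperplanes of $\cH$ do not cross chamber interiors. Once that is in hand, the argument is a direct translation between halfspace membership and descent sets combined with the sphericity of $\desc(w)$.
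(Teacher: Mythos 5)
Your proof is correct and follows essentially the same route as the paper's: translate halfspace membership into descent conditions via Proposition~\ref{prop:basic}, then use the definition of $C_\sigma$ together with the sphericity of $\desc(w)$ (Proposition~\ref{prop:longestelement}) to rule out chambers with $\desc(w)\supsetneq\sigma$. The only cosmetic difference is that you work from the ``intersection of halfspaces $H_s^+$'' description of $C_\sigma$ rather than the ``remove the $A_\tau^-$'' description, and you make explicit the sphericity step that the paper leaves implicit.
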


\begin{proof}{}
By Proposition~\ref{prop:basic}, $wC\subset H_s^-$ if and only if
$l(sw)<l(w)$, i.e., if and only if $s\in\desc(w)$.   Thus,
$A_{\sigma}^-$ is the union of those chambers $wC$ such that
$\desc(w)\supset\sigma$.  Since $C_{\sigma}$ is obtained from
$A_{\sigma}^-$ by removing the interior of every $A_{\tau}^-$ for
$\tau\supset\sigma$, the result follows.  
\end{proof}

Next we describe a finer decomposition (than the $C_{\sigma}$'s) of
$U$.  We define a partial order $\leq$ on $\cH$ by: $H_1\leq H_2$ if
and only if $H_1^+\subset H_2^+$.  In other words, $H_1<H_2$ if and
only if $H_1$ separates the hyperplane $H_2$ from the fundamental
chamber $C$. 
Let $\cH_{min}$ denote the set of minimal elements in the poset $\cH$.
In particular, each fundamental hyperplane $H_s$ is in $\cH_{min}$,
and more generally, for any spherical $\sigma$, the hyperplanes in
$\cH_{\sigma}$ are all in $\cH_{\min}$ (since they intersect the
fundamental chamber in the interior of $U$).
We recall some fundamental properties of the set
of minimal hyperplanes.  The first statement below is the main result
in \cite{BH}, the second is Proposition~2.2 in \cite{Casselman}. 

\begin{proposition}\hspace*{.1in}\label{prop:minprops}  
\begin{enumerate}
\item The set $\cH_{min}$ is finite.
\item If $s\in S$ and $H\in\cH_{\min}$, then either $sH\in\cH_{\min}$
  or $H_s\leq sH$.
\end{enumerate}
\end{proposition}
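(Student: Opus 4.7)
The plan is to treat the two parts separately. Part (1) is the main theorem of \cite{BH}: via the standard bijection between positive roots and hyperplanes (a root $\alpha$ corresponds to the hyperplane $\alpha^{\perp}\cap U$), the set $\cH_{\min}$ corresponds to Brink and Howlett's set of \emph{dominance-minimal} (a.k.a.\ small) roots, which they prove is finite. I would simply quote that result, since any self-contained proof would reconstruct a substantial portion of the Brink--Howlett machinery of the dominance order on the root system.

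For part (2), I would first establish an auxiliary observation about how $s$ interacts with the assignment $H \mapsto H^+$. Namely, for any $H' \in \cH$ with $H' \neq H_s$, the identity $s(H'^+) = (sH')^+$ holds. The reason is that $C$ and $sC$ are adjacent chambers sharing exactly the single wall $H_s$, so no other hyperplane separates them; in particular, $sC \subset H'^+$. Applying the linear involution $s$ yields $C \subset s(H'^+)$, which identifies $s(H'^+)$ as the fundamental halfspace of the hyperplane $sH'$. Consequently, for any two hyperplanes $H_1, H_2$ distinct from $H_s$, the relation $H_1 \leq H_2$ is equivalent to $sH_1 \leq sH_2$.

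With this in hand, part (2) follows by contradiction. Suppose $H \in \cH_{\min}$, that $sH \notin \cH_{\min}$, and that moreover $H_s \not\leq sH$. From $sH \notin \cH_{\min}$ choose $H' \in \cH$ with $H'^+ \subsetneq (sH)^+$; by the extra assumption $H_s \not\leq sH$, we may take $H' \neq H_s$. Note also that $sH \neq H_s$, since $H \neq H_s$ (otherwise $sH = H_s$ would already be minimal). Applying the auxiliary observation to both $H'$ and $sH$ and using that $s$ is an involution, the strict inclusion $H'^+ \subsetneq (sH)^+$ transforms into $(sH')^+ \subsetneq H^+$. This says $sH' < H$, contradicting the minimality of $H$. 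Hence either $sH \in \cH_{\min}$ or $H_s \leq sH$, as claimed.

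The main obstacle is Part (1), which genuinely requires the Brink--Howlett finiteness theorem and cannot be bypassed by elementary arguments; once Part (1) is granted, Part (2) reduces to the short bookkeeping described above.
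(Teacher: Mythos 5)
Your proposal is correct, but it does more work than the paper, which offers no proof at all: the paper simply records part (1) as the main result of Brink--Howlett \cite{BH} and part (2) as Proposition~2.2 of Casselman's paper. For part (1) you do exactly what the paper does (quote \cite{BH}), and that is the right call -- the finiteness of the small/dominance-minimal roots is not something one re-derives in passing. For part (2) you supply an actual argument where the paper only cites, and the argument is sound: the key observation that $s(H'^+)=(sH')^+$ for every $H'\neq H_s$ (because $H_s$ is the unique hyperplane separating $C$ from $sC$) correctly shows that conjugation by $s$ is an order isomorphism of the poset $\cH\setminus\{H_s\}$, and you handle the two delicate points -- that $sH\neq H_s$ under the contradiction hypothesis, and that a witness $H'<sH$ can be chosen distinct from $H_s$ precisely because $H_s\not\leq sH$ -- so the strict relation $H'<sH$ transports to $sH'<H$, contradicting minimality of $H$. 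This is essentially the standard proof (and, in substance, Casselman's); the only thing your write-up buys over the paper is self-containedness of part (2), at the cost of invoking without proof the standard fact that no hyperplane other than $H_s$ separates $C$ from $sC$, which in this paper's setup follows from Proposition~\ref{prop:basic} for fundamental hyperplanes and from the usual length-counts-separating-walls lemma in general.
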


Let $\cR$ be the collection of regions of $U$ cut out by
$\cH_{\min}$.  Since each fundamental hyperplane is in $\cH_{\min}$,
the fundamental chamber $C$ is one of the regions in $\cR$.  More
generally, each region $R$ in $\cR$ is a union of chambers.
We are interested in how these regions behave under
translations by the elements $w_{\sigma}$.  Recall that for each
spherical subset $\sigma$, the subgroup $W_{\sigma}$ acts on $U$
preserving the collection of hyperplanes $\cH_{\sigma}$ and with
fundamental domain $A_{\sigma}^+$.  In particular, the translates
$wA_{\sigma}^+$ for $w\in W_{\sigma}$ are the pieces cut out by
$\cH_{\sigma}$ and since these hyperplanes are all minimal, these
pieces are further divided into regions in $\cR$.
We can now state the key technical observation for the construction of
our representation. 

\begin{lemma}  \label{lem:s-translate}
Suppose $\sigma\subset S$ is spherical, $w\in
  W_{\sigma}$, and $s\in S$ satisfies $|sw|>|w|$.  Then for every
  region $R\in\cR$ such that $R\subseteq wA_{\sigma}^+$ there exists a
  region $R'\in\cR$ such that $sR\subseteq R'\subseteq
  swA_{\sigma}^+$.  In particular, if $R\subset A_{\sigma}^+$, then
  there exists an $R'\in\cR$ such that $w_{\sigma}R\subseteq
  R'\subseteq w_{\sigma}A_{\sigma}^+=A_{\sigma}^-$. 
\end{lemma}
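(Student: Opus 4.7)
The plan is to establish separately the two containments asserted in the lemma: (i) $sR$ is contained in a single region $R' \in \cR$, and (ii) $R' \subseteq swA_\sigma^+$. The inclusion $sR \subseteq swA_\sigma^+$ itself is immediate from applying $s$ to the hypothesis $R \subseteq wA_\sigma^+$; the real work is to produce the region $R'$ and to show it does not extend beyond $swA_\sigma^+$.

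To produce $R'$, I will argue by contradiction. Suppose some minimal hyperplane $H' \in \cH_{\min}$ cuts through $sR$; then $sH' = s^{-1}H'$ cuts through $R$. Since $R \in \cR$ is uncut by every element of $\cH_{\min}$, $sH'$ cannot itself be minimal. Proposition~\ref{prop:minprops}(2) applied to $H'$ therefore forces $H_s \leq sH'$, placing $sH'$ in the closed halfspace $H_s^-$; and $sH' \neq H_s$, since otherwise $H' = H_s$ would lie in $\cH_{\min}$, contradicting our deduction. The heart of the argument is then the containment
\[
wA_\sigma^+ \subseteq H_s^+,
\]
from which $R \subseteq H_s^+$, so $\operatorname{int}(R) \cap H_s^- = \emptyset$, contradicting that $sH'$ cuts through $R$.

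I expect proving $wA_\sigma^+ \subseteq H_s^+$ to be the main obstacle; it uses that $sw \in W_\sigma$, a hypothesis that holds in the iterative application producing the ``in particular'' statement (since there $s$ is always drawn from $\sigma$). The proof is combinatorial: every chamber $uC \subseteq wA_\sigma^+$ factors uniquely as $u = wv$ with $\desc(v) \cap \sigma = \emptyset$, and the standard parabolic coset length formula gives $|u| = |w| + |v|$. Applying the same identity to $su = (sw)v$, one obtains $|su| = |sw| + |v| = |u| + 1$, so $s \notin \desc(u)$, and Proposition~\ref{prop:basic} places $uC$ in $H_s^+$.

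For the containment $R' \subseteq swA_\sigma^+$, the bounding hyperplanes of $swA_\sigma^+$ are $\{swH_t : t \in \sigma\}$; since $sw \in W_\sigma$, these lie in $\cH_\sigma$ and hence in $\cH_{\min}$. The region $R'$, being uncut by any minimal hyperplane, cannot cross any of these walls, and the non-empty intersection $sR \subseteq R' \cap swA_\sigma^+$ anchors $R'$ on the correct side of each wall, forcing $R' \subseteq swA_\sigma^+$. The ``in particular'' statement then follows by iterating the main claim along a reduced expression $w_\sigma = s_k \cdots s_1$ with each $s_i \in \sigma$: successive application with $w = s_i \cdots s_1$ and next letter $s = s_{i+1}$ produces a chain $s_i \cdots s_1 R \subseteq R_i' \subseteq s_i \cdots s_1 A_\sigma^+$, terminating with $w_\sigma R \subseteq R_k' \subseteq w_\sigma A_\sigma^+ = A_\sigma^-$.
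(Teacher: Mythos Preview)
Your argument is correct and follows the same overall strategy as the paper's: suppose a minimal hyperplane $H$ cuts the interior of $sR$, translate back by $s$, deduce $sH\notin\cH_{\min}$, and invoke Proposition~\ref{prop:minprops}(2) to force $H_s^+\subseteq(sH)^+$, contradicting $R\subseteq H_s^+$; the containment $R'\subseteq swA_\sigma^+$ is then handled identically, using that the walls of $swA_\sigma^+$ lie in $\cH_\sigma\subseteq\cH_{\min}$, and the ``in particular'' clause follows by the same iteration along a reduced word for $w_\sigma$.

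The one substantive difference is how you justify $wA_\sigma^+\subseteq H_s^+$. The paper does this geometrically in one line: since $H_s\in\cH_\sigma$ and $wA_\sigma^+$ is a single piece of the $\cH_\sigma$-arrangement containing the chamber $wC\subseteq H_s^+$, the whole piece lies in $H_s^+$. Your combinatorial route via the parabolic coset length formula $|wv|=|w|+|v|$ (for $w\in W_\sigma$ and $\desc(v)\cap\sigma=\emptyset$) is a perfectly valid alternative and makes the dependence on hypotheses more transparent. Indeed, you are right to flag that this step---and likewise the minimality of the walls of $swA_\sigma^+$---uses $sw\in W_\sigma$, i.e.\ $s\in\sigma$. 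The paper's argument tacitly relies on this too (it needs $H_s\in\cH_\sigma$), and although the lemma is stated for arbitrary $s\in S$, only the case $s\in\sigma$ is ever invoked, via the iteration in the final sentence.
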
 

\begin{proof}{}
Since $l(sw)>l(w)$, we have $wC\subseteq H_s^+$ (Figure~\ref{fig:automaton}).  
\begin{figure}[ht]
\begin{center}
\psfrag{C}{$C$}
\psfrag{wC}{$wC$}
\psfrag{Hs}{$H_{s}$}
\psfrag{Ht}{$H_{t}$}
\psfrag{Ast+}{$A_{st}^+$}
\psfrag{wAst+}{$wA_{st}^+$}
\psfrag{swAst+}{$swA_{st}^+$}
\psfrag{R}{$R$}
\psfrag{sR}{$sR$}
\psfrag{R'}{$R'$}
\psfrag{H}{$H$}
\psfrag{sH}{$sH$}
\includegraphics[scale = 1]{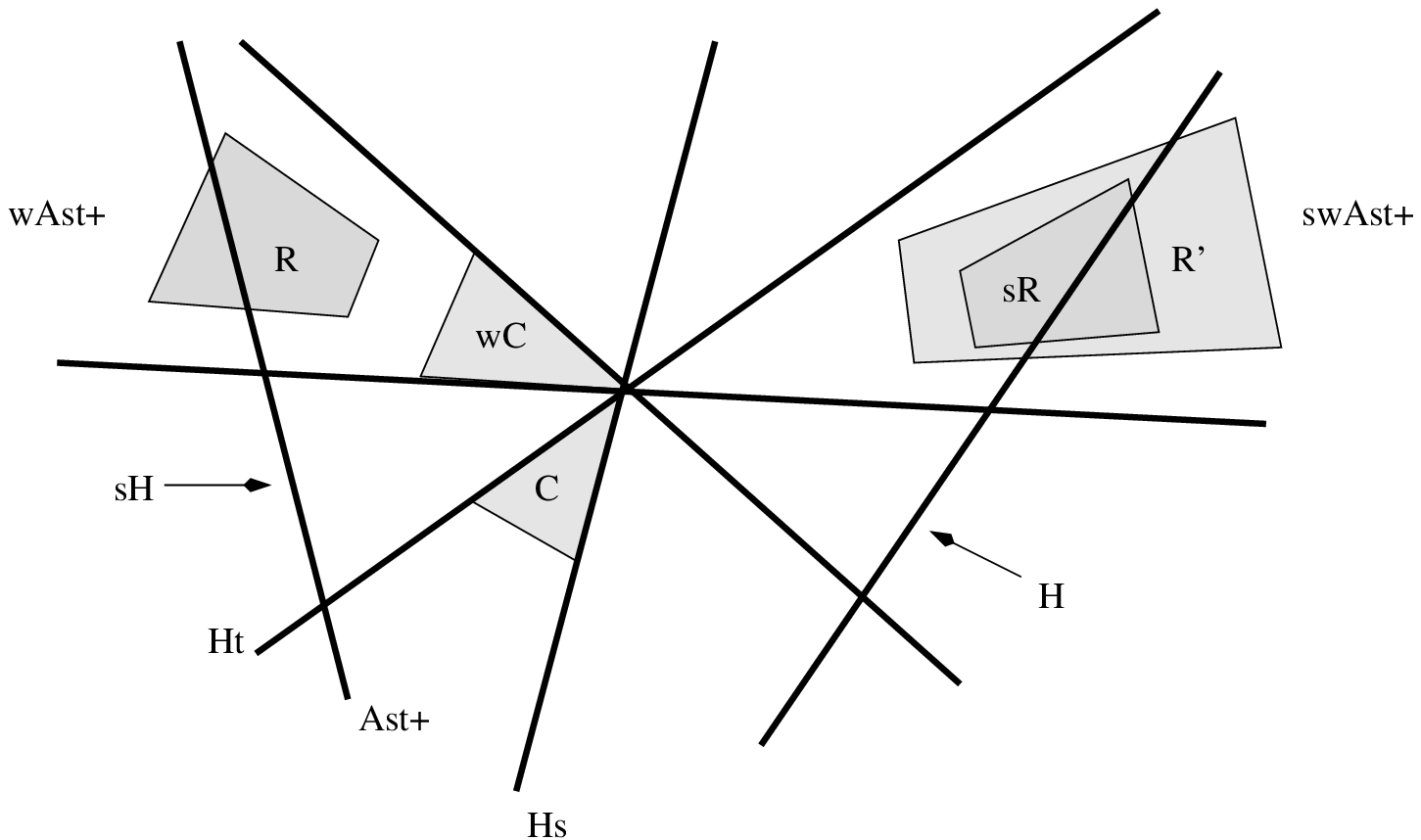}
\caption{\label{fig:automaton}}
\end{center}
\end{figure}
Since $wC$ and
$wA_{\sigma}^+$ lie on the same side of every hyperplane in
$\cH_{\sigma}$, we also know 
$R\subseteq wA_{\sigma}^+\subseteq H_s^+$.  Now suppose on the contrary
that $sR$ were {\em not} contained in some region $R'$.  This means
that (the interior of) $sR$ would have to be cut by some minimal
hyperplane $H\in\cH_{min}$.  Multiplying by $s$, we would then have
$sH$ intersecting the interior of the region $R$.  Since the
interior of $R$ is disjoint from every minimal hyperplane, this means
$sH$ cannot be minimal, hence by (2) of
Proposition~\ref{prop:minprops}, we must have $sH\geq H_s$ or,
equivalently, $H_s^+\subseteq (sH)^+$.  But this implies $R\subset
(sH)^+$ which is impossible since the hyperplane $sH$ intersects the
interior of the region $R$.  It follows that there must exist a region
$R'$ such that $sR\subseteq R'$.  Moreover, since $sR\subseteq
swA_{\sigma}^+$ and the latter is bounded by minimal hyperplanes , we
must have $R'\subseteq swA_{\sigma}^+$ as well. 

The final claim of the lemma follows by applying the first part of the
lemma iteratively to any reduced expression $s_k\ldots s_1$ for
$w_{\sigma}$.  
\end{proof}

We can now describe the inductive lemma we need to construct our
rational representation for $\chi_{\cL}$.  We set $W^R=\{w\in
W\;|\;wC\subset R\}$ and $\cL_R=\{\alpha\in\cL\;|\; \pi(\alpha)\in
W^R\}$.  Thus, $W^R$ is the set of group elements that move the
fundamental chamber into the region $R$, and $\cL_R$ is the set of
greedy representatives for those elements.  As a point of 
clarification, we note that the {\em length} of a word $\alpha$ is its
length as a string of letters.  (This is not the same as
$|\pi(\alpha)|$, the length of $\pi(\alpha)$ in $W$.)

\begin{lemma} \label{lem:induction} 
Suppose $\alpha$ is a word of length $k$ in $\cL_R$.  Then
$\sigma\alpha$ is a word of length $k+1$ in $\cL$ if and only if
$w_{\sigma}R\subset C_{\sigma}$.  Moreover, in this case, there exists
a unique region $R'\in\cR$ such that $w_{\sigma}R\subseteq R'$
(that is, such that $\sigma\alpha\in\cL_{R'}$).   
\end{lemma}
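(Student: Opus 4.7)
The plan is to reduce the lemma to a geometric statement and then invoke Lemma~\ref{lem:s-translate}. Setting $w=\pi(\alpha)$, so that $wC\subseteq R$, the definition of the greedy normal form says that $\sigma\alpha\in\cL$ precisely when $\sigma$ is the left descent set of $w_\sigma w=\pi(\sigma\alpha)$; and by Proposition~\ref{prop:descent-test} this is equivalent to $w_\sigma wC\subseteq C_\sigma$. Thus the task is to show that $w_\sigma wC\subseteq C_\sigma$ holds if and only if the entire translated region $w_\sigma R$ lies in $C_\sigma$, and in that case to find a single region of $\cR$ containing $w_\sigma R$.

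One direction is essentially free: since $wC\subseteq R$, the containment $w_\sigma R\subseteq C_\sigma$ immediately forces $w_\sigma wC\subseteq C_\sigma$, hence $\desc(w_\sigma w)=\sigma$ and $\sigma\alpha\in\cL$. For the converse I would first establish that $\desc(w_\sigma w)=\sigma$ implies $\sigma\cap\desc(w)=\emptyset$: Proposition~\ref{prop:longestelement} applied to $w_\sigma w$ yields $|w_\sigma w|=|w_\sigma|+|w|$, but any $s\in\sigma\cap\desc(w)$ would furnish the factorization $w_\sigma w=(w_\sigma s)(sw)$ with $|w_\sigma s|=|w_\sigma|-1$ and $|sw|=|w|-1$, producing $|w_\sigma w|\le|w_\sigma|+|w|-2$ and contradicting the length identity. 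Consequently $wC\subseteq A_\sigma^+$, and since each $H_s$ for $s\in\sigma$ is a minimal hyperplane, the whole region $R$ is contained in $A_\sigma^+$. Now Lemma~\ref{lem:s-translate} applies and produces a region $R'\in\cR$ with $w_\sigma R\subseteq R'\subseteq A_\sigma^-$.

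The main obstacle, which I expect to be the only subtle point, is improving the conclusion $R'\subseteq A_\sigma^-$ to $R'\subseteq C_\sigma$. The key observation is that $C_\sigma$ is itself an intersection of fundamental halfspaces (those bounded by the $H_s$ with $s\in\sigma$, together with the $H_s$ such that $s\notin\sigma$ and $\sigma\cup\{s\}$ is spherical), and each of these hyperplanes lies in $\cH_{\min}$; hence $C_\sigma$ is a union of regions of $\cR$. Since $R'$ contains the chamber $w_\sigma wC$, which lies in $C_\sigma$ by Proposition~\ref{prop:descent-test}, $R'$ meets the interior of $C_\sigma$, and therefore $R'\subseteq C_\sigma$, giving $w_\sigma R\subseteq C_\sigma$ as required. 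The uniqueness of $R'$ is automatic, since distinct regions of $\cR$ have disjoint interiors, so at most one of them can contain the translate $w_\sigma R$.
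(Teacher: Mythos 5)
Your proof is correct and follows essentially the same route as the paper's: reduce to $w_{\sigma}wC\subseteq C_{\sigma}$ via Proposition~\ref{prop:descent-test}, show $R\subseteq A_{\sigma}^+$, and then apply Lemma~\ref{lem:s-translate} together with the fact that $C_{\sigma}$ is a union of regions of $\cR$. Your derivation of $R\subseteq A_{\sigma}^+$ directly from $\desc(w_{\sigma}w)=\sigma$ (via the length identity of Proposition~\ref{prop:longestelement}) is if anything slightly more careful than the paper's, which obtains it from the containment $w_{\sigma}R\subseteq C_{\sigma}$ that is in the process of being established.
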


\begin{proof}{}
Let $w=\pi(\alpha)$.  Then $\sigma\alpha$ is greedy if and only if
$\sigma$ is the left descent set of $w_{\sigma}w$.  By
Proposition~\ref{prop:descent-test}, this is equivalent to
$w_{\sigma}wC\subseteq C_{\sigma}$.  Since $wC\subseteq R$ (because
$w\in\cL_R$) and $C_{\sigma}$ is a union of regions in $\cR$, this is
also equivalent to $w_{\sigma}R\subseteq C_{\sigma}$.  It remains to
show that $w_{\sigma}R$ is contained in a unique region $R'\in\cR$.
For this we note that $w_{\sigma}R\subseteq C_{\sigma}$ implies
$R\subseteq w_{\sigma}C_{\sigma}\subseteq
w_{\sigma}A_{\sigma}^-=A_{\sigma}^+$.  Thus, by Lemma~\ref{lem:s-translate},
there exists an $R'\in\cR$ such that $w_{\sigma}R\subseteq R'$.  
\end{proof}

Let $\chi\in\bbQ\<\<{ \cA}\>\>$ be the characteristic series for $\cL$ and
let $\chi_R$ denote the characteristic series for $\cL_{R}$.
Since the regions are disjoint, we have 
\[\chi=\sum_{R\in\cR}\chi_R.\]

Let $n=|\cR|$ and fix an ordering of $\cR$ (with the fundamental
chamber $R=C$ first in the order).  We can then index the entries of
vectors in $\bbQ\<\<{ \cA}\>\>^{n\times 1}$ by the set $\cR$ and the
entries of matrices in $\bbQ\<\<{ \cA}\>\>^{n\times   n}$ by the set
$\cR\times\cR$.   

\begin{theorem}\label{thm:cox-rat}
Let $Q=(Q_{R',R})\in \bbQ\<{ \cA}\>^{n\times n}$ be the matrix given by  
\[Q_{R',R}=\left\{\begin{array}{ll}
\sigma & \mbox{if $w_{\sigma}R\subset R'$ and $R'\subset B_{\sigma}$}\\
0 & \mbox{otherwise}\end{array}\right.,\]
and let $A\in\bbQ^{1\times n}$, $B\in \bbQ^{n\times 1}$ be the vectors  
\[A=\left[\begin{array}{cccc}1 & 1 & \cdots &
  1\end{array}\right]\hspace{.5in}
\mbox{and}\hspace{.5in}
B=\left[\begin{array}{cccc}1 & 0 & \cdots &
  0\end{array}\right]^T\]
Then $\chi=A(I-Q)^{-1}B$.  In particular, $\chi$ is rational (by
Theorem~\ref{thm:psrep}). 
\end{theorem}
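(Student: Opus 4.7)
The plan is to show that $Q$ encodes the transition structure of a finite-state automaton whose accepted language is $\cL$, with state set $\cR$, start state $C$, and every state accepting; then $\chi = A(I - Q)^{-1}B$ will follow from the standard automaton-to-series recipe, and rationality from Theorem~\ref{thm:psrep}.

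First, I would justify the partition $\cL = \bigsqcup_{R\in\cR}\cL_R$. Each chamber $wC$ meets a unique region of $\cR$ (chambers have disjoint interiors, and regions tile $U$), so $\alpha \mapsto$ (the region containing $\pi(\alpha)C$) is well-defined on $\cL$. Since $\pi(1) = 1$ and $C$ is itself the fundamental chamber, the empty word lies in $\cL_C$ and in no other $\cL_R$; this accounts for the summand $B$ in the eventual recursion.

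Second, I would set up the length-one recursion. For any nonempty word $\sigma\alpha \in \cL_{R'}$, the tail $\alpha$ lies in a unique $\cL_R$, and Lemma~\ref{lem:induction} provides (i) that greediness of $\sigma\alpha$ is equivalent to $w_\sigma R \subseteq C_\sigma$, and (ii) that the region $R'$ is the unique region in $\cR$ containing $w_\sigma R$. To match the matrix entries in the theorem, I would verify that $C_\sigma$ is itself a union of regions in $\cR$: $C_\sigma$ is obtained from $A_\sigma^-$ by intersecting with fundamental halfspaces bounded by $H_s$ for $s \notin \sigma$, and every fundamental hyperplane is minimal. Hence $w_\sigma R \subseteq C_\sigma$ is equivalent to $R' \subseteq C_\sigma$, matching exactly the defining conditions of $Q_{R',R}$.

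Third, this bijection between nonempty words in $\cL_{R'}$ and triples $(R, \sigma, \alpha)$ with $w_\sigma R \subseteq R' \subseteq C_\sigma$ and $\alpha \in \cL_R$ yields the linear recurrence
\[\chi_{R'} = \delta_{R',C} + \sum_{R\in\cR} Q_{R',R}\chi_R,\]
i.e., $\vec{\chi} = B + Q\vec{\chi}$ where $\vec{\chi} = (\chi_R)_{R\in\cR}$. Since every entry of $Q$ is either $0$ or a single letter $\sigma \in \cA$, the matrix $Q$ is quasi-regular, so $I - Q$ is invertible over $\bbQ\<\<\cA\>\>^{n\times n}$, giving $\vec{\chi} = (I-Q)^{-1}B$. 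Left-multiplying by the all-ones row $A$ sums the components, yielding $\chi = A(I - Q)^{-1}B$. Finiteness $n = |\cR| < \infty$ comes from Proposition~\ref{prop:minprops}(1), and the entries of $Q$ are linear homogeneous, so Theorem~\ref{thm:psrep} applies to conclude rationality.

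The main obstacle, essentially already handled by Lemma~\ref{lem:induction}, is ensuring that the prepend-by-$\sigma$ operation on regions is \emph{deterministic}: one unique target region $R'$ per valid pair $(R, \sigma)$. Once this determinism is in hand, the matrix identity is just bookkeeping, and the only nontrivial algebraic input is the invertibility of $I - Q$ for quasi-regular $Q$ recorded in Section~\ref{s:rational}.
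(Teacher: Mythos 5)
Your proposal is correct and follows essentially the same route as the paper: partition $\cL$ into the $\cL_R$, use Lemma~\ref{lem:induction} to get the deterministic one-step transition encoded by $Q$, and solve the resulting linear recurrence via $(I-Q)^{-1}$ (the paper phrases this as $X_{k+1}=QX_k$ summed over lengths rather than as the fixed-point equation $\vec\chi=B+Q\vec\chi$, but these are equivalent). Your observation that $C_\sigma$ is a union of regions, so that $w_\sigma R\subseteq C_\sigma$ is equivalent to $R'\subseteq C_\sigma$, is exactly the point the paper relies on as well.
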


\begin{proof}{}
For each $R\in\cR$, let $\cL_R^k$ denote the set of words in
$\cL_R$ of length $k$, and let $\chi_R^k$ be the corresponding
characteristic series. Let $X_k\in\bbQ\<\<{ \cA}\>\>^{n\times 1}$ be the
vector whose $R$th entry is $\chi_R^k$, and let $X$ be the sum 
\[X=X_1+X_2+\cdots.\]
In other words, $X$ is the vector whose $R$th entry is $\chi_R$, the
characteristic series for $\cL_R$.  Since there is
only one word of length $1$ (corresponding to the region $C$), we know
the initial vector $X_1=B$.  In general, by Lemma~\ref{lem:induction},
$X_k$ satisfies the recurrence  
\[X_{k+1}=QX_k.\] 
It follows that $X=(I+Q+Q^2+\cdots)B=(I-Q)^{-1}B$.  Since $\chi=\sum_R\chi_R=AX$, the result follows.
\end{proof}

\begin{remark}
In proving rationality, we have essentially constructed a finite state
automaton that recognizes the language $\cL$.  The state set is the
set $\cR\cup\{F\}$ where $F$ is a single fail state.  The start state
is $C$, and the transition function
$\mu:{ \cA}\times\cR\cup\{F\}\rightarrow\cR\cup\{F\}$ is defined by 
\[\mu(\sigma,R)=\left\{\begin{array}{ll}
R'&\mbox{if $R\neq F$ and $w_{\sigma}R\subset
      R'\subset B_{\sigma}$}\\
F & \mbox{otherwise}\end{array}\right.\]
\end{remark}

We illustrate Theorem~\ref{thm:cox-rat} with the following example.

\begin{example}\label{ex:b2}
Let $W$ be the affine Coxeter group $\tilde{B}_2$.  That is
$S=\{s_1,s_2,s_3\}$, and the Coxeter relations are 
\[(s_1s_2)^4=1\hspace{.5in}(s_2s_3)^4=1\hspace{.5in} (s_1s_3)^2=1.\]
There are 
$8$ minimal hyperplanes $H_1,H_2,\ldots, H_8$ cutting $U$ into the
$25$ regions $R_{\emptyset}=C$, $R_1$, $R_2$, $R_3$, $R_{23}$, $R_{27}$, $R_{14}$,
$R_{15}$, $R_{36}$, $R_{368}$, $R_{238}$, $R_{278}$, $R_{247}$, $R_{147}$, $R_{145}$,
$R_{156}$, $R_{356}$, $R_{3568}$, $R_{2368}$, $R_{2378}$, $R_{2478}$, $R_{1247}$,
$R_{1457}$, $R_{1456}$, $R_{1356}$ (see Figure~\ref{fig:b2}).  The regions
are indexed by the set of minimal hyperplanes that separate them from
the fundamental chamber.  The spherical subsets are $a=\{s_1\}$, $b=\{s_2\}$,
$c=\{s_3\}$, $x=\{s_2,s_3\}$, $y=\{s_1,s_3\}$, and $z=\{s_1,s_2\}$.
The longest elements $w_a$, $w_b$, and $w_c$ act on the figure by
reflecting across the hyperplane $H_1$, $H_2$, and $H_3$,
respectively.  The longest elements $w_{x}$, $w_{y}$, and $w_{z}$ act
on the figure by rotating $180^{\circ}$ about the points $H_2\cap
H_3$, $H_1\cap H_3$, and $H_1\cap H_2$, respectively. 

\begin{figure}[ht]
\begin{center}
\psfrag{C1}{$C_1$}
\psfrag{C2}{$C_2$}
\psfrag{C3}{$C_3$}
\psfrag{C12}{$C_{12}$}
\psfrag{C23}{$C_{23}$}
\psfrag{C13}{$C_{13}$}
\psfrag{h1}{$H_1$}
\psfrag{h2}{$H_2$}
\psfrag{h3}{$H_3$}
\psfrag{h4}{$H_4$}
\psfrag{h5}{$H_5$}
\psfrag{h6}{$H_6$}
\psfrag{h7}{$H_7$}
\psfrag{h8}{$H_8$}
\includegraphics[scale = 1]{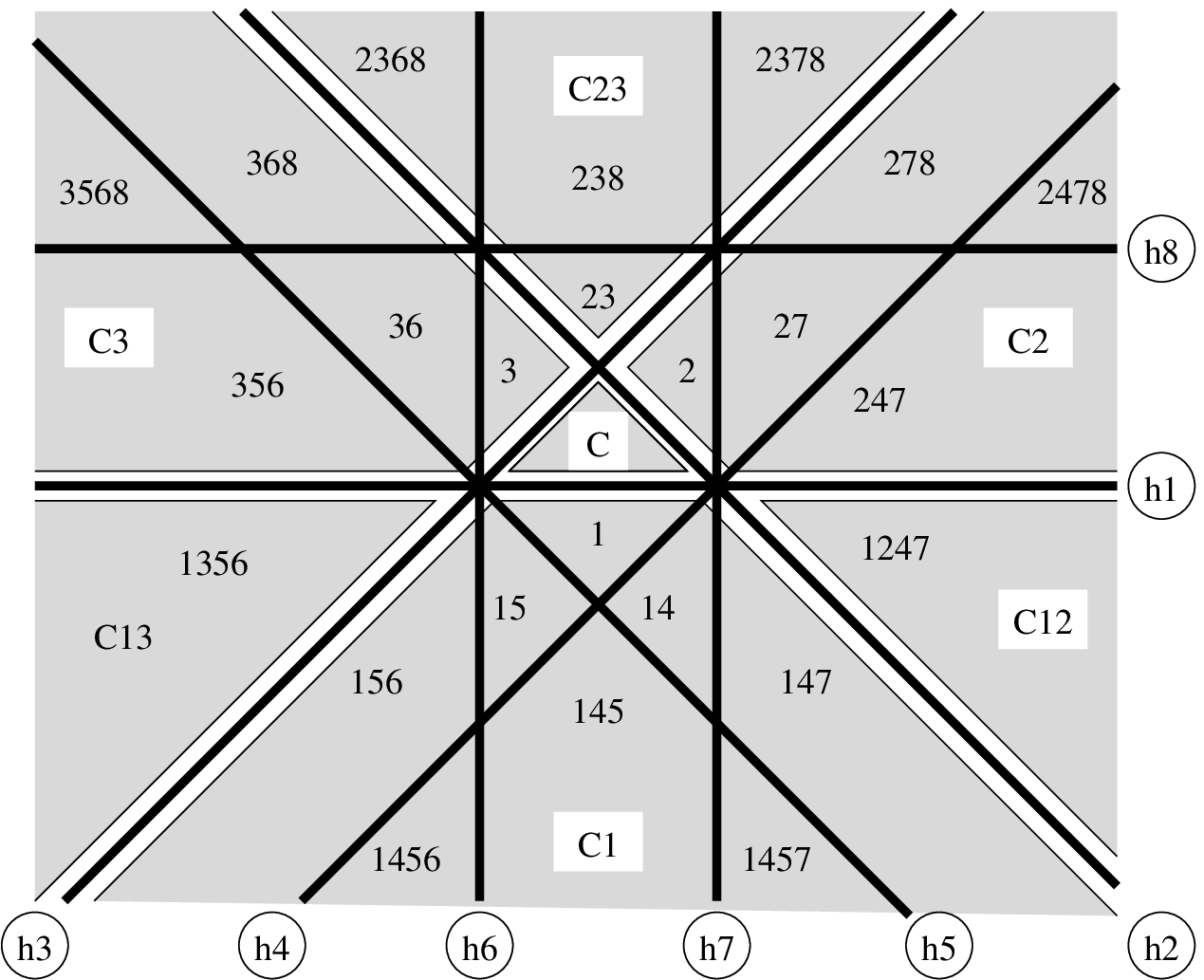}
\caption{\label{fig:b2}}
\end{center}
\end{figure}

With respect to this ordering of $\cR$, the matrix $Q$ of
Theorem~\ref{thm:cox-rat} is shown below. (To see where the entries of
this matrix came from, consider the region $R_{1456}$.  The
$w_x=s_2s_3s_2s_3$ translate of $R_{1456}$ lies in 
the region $R_{2378}$.  Since this region lies in $C_{23}$, the
$(R_{2378},R_{1456})$-entry of $Q$ is $x=\{s_2,s_3\}$.  On the other
hand, although the $w_c=s_3$ translate of $R_{1456}$ lands in
$R_{1356}$, the latter region is not contained in $C_3$, so the corresponding
entry of $Q$ is zero.  Both of these entries are indicated in bold in
the matrix.)

\[{\tiny\left[\begin{array}{ccccccccccccccccccccccccc}
0 & 0 & 0 & 0 & 0 & 0 & 0 & 0 & 0 & 0 & 0 & 0 & 0 & 0 & 0 & 0 & 0 & 0 & 0 & 0 & 0 & 0 & 0 & 0 & 0\\
a& 0 & 0 & 0 & 0 & 0 & 0 & 0 & 0 & 0 & 0 & 0 & 0 & 0 & 0 & 0 & 0 & 0 & 0 & 0 & 0 & 0 & 0 & 0 & 0\\
b& 0 & 0 & 0 & 0 & 0 & 0 & 0 & 0 & 0 & 0 & 0 & 0 & 0 & 0 & 0 & 0 & 0 & 0 & 0 & 0 & 0 & 0 & 0 & 0\\
c& 0 & 0 & 0 & 0 & 0 & 0 & 0 & 0 & 0 & 0 & 0 & 0 & 0 & 0 & 0 & 0 & 0 & 0 & 0 & 0 & 0 & 0 & 0 & 0\\
x& 0 & 0 & 0 & 0 & 0 & 0 & 0 & 0 & 0 & 0 & 0 & 0 & 0 & 0 & 0 & 0 & 0 & 0 & 0 & 0 & 0 & 0 & 0 & 0\\
0 & b& 0 & 0 & 0 & 0 & 0 & b& 0 & 0 & 0 & 0 & 0 & 0 & 0 & 0 & 0 & 0 & 0 & 0 & 0 & 0 & 0 & 0 & 0\\
0 & 0 & a& 0 & 0 & 0 & 0 & 0 & 0 & 0 & 0 & 0 & 0 & 0 & 0 & 0 & 0 & 0 & 0 & 0 & 0 & 0 & 0 & 0 & 0\\
0 & 0 & 0 & a& 0 & 0 & 0 & 0 & 0 & 0 & 0 & 0 & 0 & 0 & 0 & 0 & 0 & 0 & 0 & 0 & 0 & 0 & 0 & 0 & 0\\
0 & c& 0 & 0 & 0 & 0 & c& 0 & 0 & 0 & 0 & 0 & 0 & 0 & 0 & 0 & 0 & 0 & 0 & 0 & 0 & 0 & 0 & 0 & 0\\
0 & 0 & 0 & 0 & 0 & 0 & 0 & 0 & 0 & 0 & 0 & 0 & 0 & c& 0 & 0 & 0 & 0 & 0 & 0 & 0 & 0 & 0 & 0 & 0\\
0 & x& 0 & 0 & 0 & 0 & x& x& 0 & 0 & 0 & 0 & 0 & 0 & x& 0 & 0 & 0 & 0 & 0 & 0 & 0 & 0 & 0 & 0\\
0 & 0 & 0 & 0 & 0 & 0 & 0 & 0 & 0 & 0 & 0 & 0 & 0 & 0 & 0 & b& 0 & 0 & 0 & 0 & 0 & 0 & 0 & 0 & 0\\
0 & 0 & 0 & 0 & 0 & 0 & b& 0 & 0 & 0 & 0 & 0 & 0 & 0 & b& 0 & 0 & 0 & 0 & 0 & 0 & 0 & 0 & 0 & 0\\
0 & 0 & 0 & 0 & 0 & a& 0 & 0 & 0 & 0 & 0 & a& 0 & 0 & 0 & 0 & 0 & 0 & 0 & 0 & 0 & 0 & 0 & 0 & 0\\
0 & 0 & 0 & 0 & a& 0 & 0 & 0 & 0 & 0 & a& 0 & 0 & 0 & 0 & 0 & 0 & 0 & 0 & 0 & 0 & 0 & 0 & 0 & 0\\
0 & 0 & 0 & 0 & 0 & 0 & 0 & 0 & a& a& 0 & 0 & 0 & 0 & 0 & 0 & 0 & 0 & 0 & 0 & 0 & 0 & 0 & 0 & 0\\
0 & 0 & 0 & 0 & 0 & 0 & 0 & c& 0 & 0 & 0 & 0 & 0 & 0 & c& 0 & 0 & 0 & 0 & 0 & 0 & 0 & 0 & 0 & 0\\
0 & 0 & 0 & 0 & 0 & 0 & 0 & 0 & 0 & 0 & 0 & 0 & 0 & 0 & 0 & 0 & 0 & 0 & 0 & 0 & 0 & 0 & c& 0 & 0\\
0 & 0 & 0 & 0 & 0 & 0 & 0 & 0 & 0 & 0 & 0 & 0 & 0 & x& 0 & 0 & 0 & 0 & 0 & 0 & 0 & 0 & x& 0 & 0\\
0 & 0 & 0 & 0 & 0 & 0 & 0 & 0 & 0 & 0 & 0 & 0 & 0 & 0 & 0 & x& 0 & 0 &
0 & 0 & 0 & 0 & 0 & \underline{\mathbf x}& 0\\
0 & 0 & 0 & 0 & 0 & 0 & 0 & 0 & 0 & 0 & 0 & 0 & 0 & 0 & 0 & 0 & 0 & 0 & 0 & 0 & 0 & 0 & 0 & b& 0\\
z& 0 & 0 & z& 0 & 0 & 0 & 0 & z& z& 0 & 0 & 0 & 0 & 0 & 0 & z& z& 0 & 0 & 0 & 0 & 0 & 0 & 0\\
0 & 0 & 0 & 0 & 0 & 0 & 0 & 0 & 0 & 0 & 0 & 0 & 0 & 0 & 0 & 0 & 0 & 0 & 0 & a& 0 & 0 & 0 & 0 & 0\\
0 & 0 & 0 & 0 & 0 & 0 & 0 & 0 & 0 & 0 & 0 & 0 & 0 & 0 & 0 & 0 & 0 & 0 & a& 0 & 0 & 0 & 0 & 0 & 0\\
y& 0 & y& 0 & 0 & y& 0 & 0 & 0 & 0 & 0 & y& y& 0 & 0 & 0 & 0 & 0 & 0 &
0 & y& 0 & 0 & \underline{\mathbf 0} & 0\\
\end{array}\right].}\]

This gives
\begin{eqnarray*}
\chi &=&A(I-Q)^{-1}B\\
&=&A(I+Q+Q^2+\cdots)B\\
&=&1+(a+b+c+x+y+z)+(ab+ba+ac+ca+ax+xa+zc+yb)+\cdots.
\end{eqnarray*}
By formally inverting $I-Q$ using row reduction and inverses of
polynomials, one can obtain a rational expression for $\chi$, but it
will be a complicated nested expression in noncommuting polynomials
and their inverses. Instead, by replacing each letter with the single
variable $t$, we obtain the (rational) series:  
\[\bar{\chi}=1+6t+8t^2+\cdots=\frac{1+4t-3t^2+4t^3-2t^4}{1-2t+t^2}.\]   
The series $\bar{\chi}$ is the ``growth series'' for the language
$\cL(W,S)$; that is, it is the generating function for the sequence
$a(n)$ where $a(n)$ is the number of group elements whose left greedy
representations have length $n$ (in the monoid $\cA^*$). 
\end{example}  

\section{The anti-incidence matrix}\label{s:Eulerian}

We shall now assume that $W$ is a {\em
right-angled Coxeter group}; that is, the function $m$ in the
defining presentation satisfies $m(s,s')\in\{2,\infty\}$ for all
$s\neq s'$.  As before, we let $ N$ denote the nerve of $W$, $\cL$
denote the greedy normal form, and $\chi$ denote its characteristic
series.  Our goal for the
remainder of the paper is to show that for certain right-angled
Coxeter groups, the reciprocal $\chi^*$ exists and (up to sign)
coincides with $\chi$.  The proof of this formula reduces to showing
that a certain integer matrix is an involution.
This matrix can be defined for any poset $\cP$ and is reminiscent of the
zeta function (cf. \cite{Stanley}) except that instead of taking
values on pairs of related elements, it takes values on pairs that are
sufficiently far apart in the geometric realization of $\cP$.  For
this reason, we call it the {\em anti-incidence matrix} for $\cP$. 

Let $K$ be a finite simplicial complex with vertex set $V$.  For
convenience, we shall identify $K$ with its standard realization in
$\bbR^V$:  that is, we identify $V$ with the standard basis of
$\bbR^V$ and each simplex of $K$ with the convex hull of its
vertices. In keeping with the rest of the paper, we shall use
bold-face Greek letters $\bsigma,\btau,\brho$ for geometric simplices
and non-bold letters $\sigma,\tau,\rho$ for their respective vertex
sets.  The {\em dimension} of a simplex $\bsigma$, denoted by
$\dim\bsigma$, is $m-1$ where $m$ is the number of vertices of
$\bsigma$.   

We let $\cP(K)$
denote the set of simplices in $K$ (including the empty simplex
$\emptyset$, which has dimension $-1$), partially ordered by
inclusion.  We let $\bchi(K)$ denote the usual Euler characteristic:
\[\bchi(K)=\sum_{\bsigma\in\cP,\bsigma\neq\emptyset}(-1)^{\dim\bsigma}\]
and we let $\bar\bchi(K)$ denote the reduced Euler characteristic:
\[\bar{\bchi}(K)=\sum_{\bsigma\in\cP}(-1)^{\dim\bsigma}=\chi(K)-1.\]
(Again, we use the bold face symbol $\bchi$ to distinguish the Euler
characteristic from the characteristic series $\chi$ of a formal
language.)  

A {\em subcomplex} of $K$ is any subset $L\subseteq K$ that can be
expressed as a union of simplices in $\cP(K)$.  Given any subcomplex
$L$ of $K$, we let $\Vert(L)$ denote the set of 
vertices in $L$. A subcomplex $L$ is called {\em full} or {\em
  induced} if it is maximal among all subcomplexes with vertex set
$\Vert(L)$.  In other words, $L$ is a full subcomplex if it contains
every simplex $\bsigma$ of $K$ whose vertices are all in $L$.   

For any simplex $\bsigma$ in $K$, we recall the following standard
subcomplexes.     
\begin{enumerate}
\item[1.] $\St(\bsigma,K)$, the {\em star} of $\bsigma$ in $K$, is the union
  of all simplices $\btau$ such that   $\btau$ and   $\bsigma$ span a
  simplex in $K$.  
\item[2.] $\Lk(\bsigma,K)$, the {\em link} of $\bsigma$ in $K$, is the union
  of all simplices   $\btau$ in $\St(\bsigma,K)$ such that
  $\btau\cap\bsigma=\emptyset$.   
\end{enumerate}
Note that $\St(\bsigma,K)$ can be identified with the join 
$\bsigma\ast\Lk(\bsigma,K)$.   Our primary object of study in this
section is the following matrix, which is determined entirely by the
combinatorial structure of $K$.

\begin{definition}\label{def:aam}
Let $K$ be a simplicial complex and let $\cP=\cP(K)$.  Then the {\em
anti-incidence} matrix of $K$ is the $\cP\times\cP$ matrix $J$
defined by  
\[J_{\bsigma,\btau}=\left\{\begin{array}{ll}
(-1)^{\dim\bsigma} & \mbox{if $\btau\cap\St(\bsigma)=\emptyset$}\\
0 & \mbox{otherwise}\end{array}\right.\] 
\end{definition}

In light of the definition of this matrix, we single out some additional
subcomplexes.  First some more notation: given a subcomplex $L$, we
let $K-L$ denote the subcomplex of $K$ obtained by deleting all open
stars of vertices in $L$ (the {\em open star} of a simplex $\bsigma$ is
the union of all relative interiors of simplices that have nontrivial
intersection with $\bsigma$), and we let $\Nb(L,K)$ denote the union of
all (closed) stars of vertices in $L$.    Given any simplex
$\bsigma$ in $K$, we define the following two subcomplexes:
\begin{enumerate}
\item[3.] $B(\bsigma,K)=\Nb(\bsigma,K)$.  Equivalently, $B(\bsigma,K)$ is the
  union of all $\St(v,K)$ such that $v\in\Vert(\bsigma)$.  
\item[4.] $E(\bsigma,K)=K-\St(\bsigma,K)$.  Equivalently, $E(\bsigma,K)$ is
  the subcomplex spanned by all simplices $\btau$ such that
  $\btau\cap\St(\bsigma,K)=\emptyset$. 
Note that $B(\emptyset,K)=E(\emptyset,K)=\emptyset$.  When the
simplicial complex $K$ is understood, we will drop it from the
notation and refer to these subcomplexes simply 
as $\St(\bsigma)$, $\Lk(\bsigma)$, $B(\bsigma)$, and $E(\bsigma)$ (Figure~\ref{fig:ebsigma}).  
\end{enumerate}
\begin{figure}[ht]
\begin{center}
\psfrag{s}{$\bsigma$}
\psfrag{Lk}{$\Lk(\bsigma)$}
\psfrag{St}{$\St(\bsigma)$}
\psfrag{E}{$E(\bsigma)$}
\psfrag{B}{$B(\bsigma)$}
\includegraphics[scale = .5]{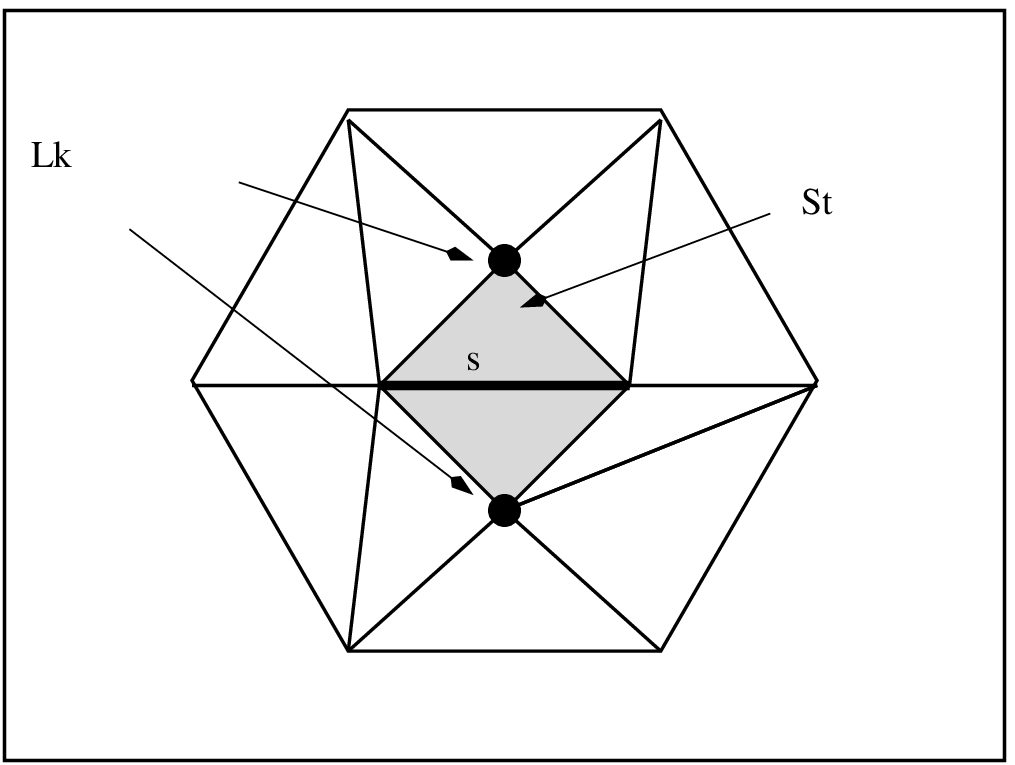}
\includegraphics[scale = .5]{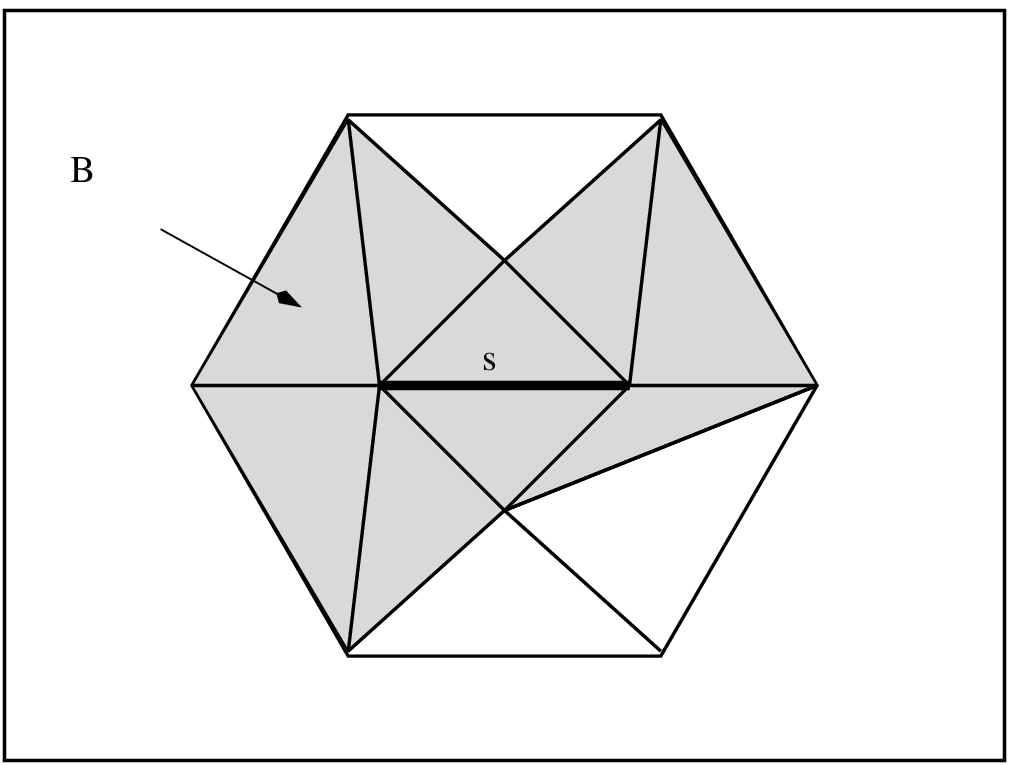}
\includegraphics[scale = .5]{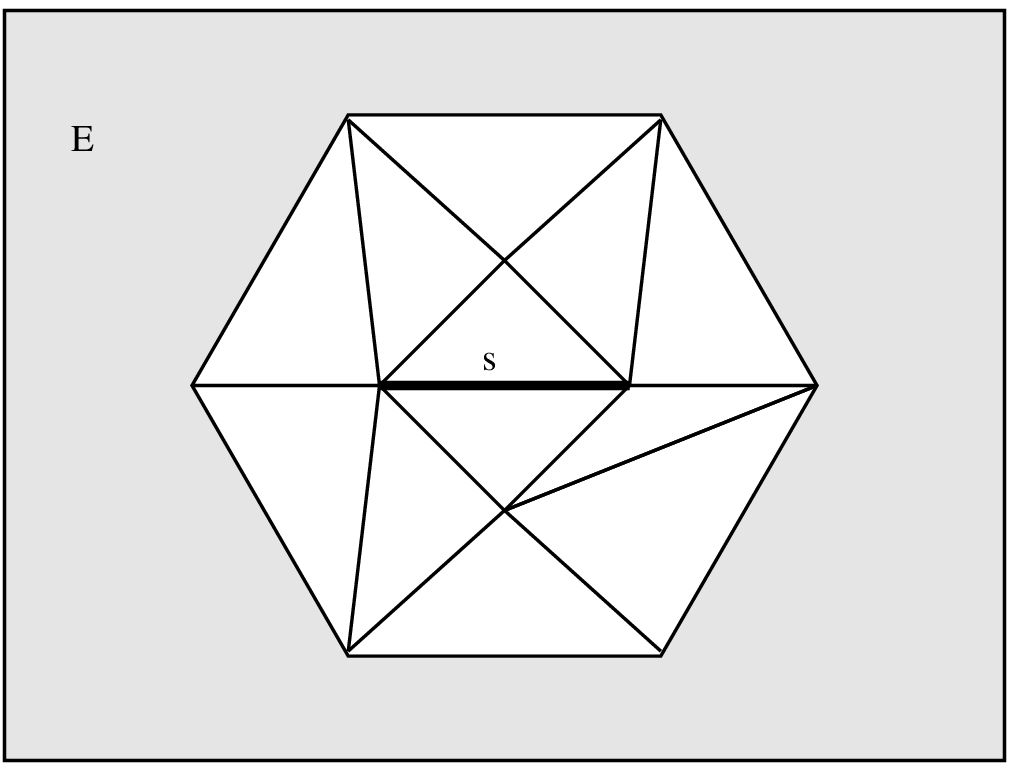}
\caption{\label{fig:ebsigma}}
\end{center}
\end{figure}

Of particular interest to us are simplicial complexes that are
determined by their vertices and edges in the following sense.

\begin{definition}
Let $K$ be a simplicial complex with vertex set $V$.  We say that two
vertices $v,w\in V$ are {\em adjacent} if they span an edge in $K$.  A
simplicial complex $K$ is a {\em flag complex} if any set of 
pairwise adjacent vertices spans a simplex in $K$.
\end{definition}

\begin{remark}\label{rem:sc-cox}
Recall that for a Coxeter group $(W,S)$, the nerve $N=N(W,S)$ can be regarded
as an abstract simplicial complex with vertex set $S$.  This means
that we can form a geometric simplicial complex (called the {\em
realization of $N$}, and denoted by $|N|$) as follows.  We identify
$S$ with the standard basis for $\bbR^S$ and we let $|N|\subset\bbR^S$
be the union of all convex hulls of subsets $\sigma\in N$.  It is a
standard result in topology that $\cP(|N|)=N$.  In the right-angled
case, a subset $\sigma\subset S$ generates a finite group if and only
if any pair of elements in $\sigma$ commute; thus the (geometric
realization of the) nerve of a right-angled Coxeter group is always a
flag complex.   Note that we have been using bold letters $\bsigma$,
$\btau$, etc., to denote geometric simplices and non-bold letters for
their respective vertex sets.  This is consistent with the rest of the
paper since any spherical subset $\sigma\in N$ spans a unique
geometric simplex $\bsigma\subseteq |N|$.  
\end{remark}

Any full subcomplex of a flag complex is a flag complex, and it is
easy to check from the definitions that if $K$ is a flag complex, then
$\Lk(\bsigma)$, $\St(\bsigma)$, and $E(\bsigma)$ are all full
subcomplexes (in fact $E(\bsigma)$ is a full subcomplex regardless of whether
$K$ is a flag complex).  Hence these subcomplexes are also flag.  The subcomplex
$B(\bsigma)$ need be neither full nor flag (take $K$ to be the
boundary complex of octahedron and $\bsigma$ to be one of the faces).
The next two propositions collect the key properties of flag complexes
that we shall need.

\begin{proposition}\label{prop:flagstar}
Let $K$ be a flag simplicial complex, and let $\bsigma$ be a nonempty
simplex in $K$.  Then 
\[\St(\bsigma)=\bigcap_{v\in\Vert(\bsigma)}\St(v).\]
\end{proposition}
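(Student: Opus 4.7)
The plan is to establish the equality of subcomplexes by showing that each contains the same simplices, working separately with the two inclusions. Since both sides are subcomplexes of $K$, it suffices to show that a simplex $\btau\in\cP(K)$ lies in $\St(\bsigma)$ if and only if it lies in every $\St(v)$ for $v\in\Vert(\bsigma)$. Unpacking definitions, the statement becomes: $\tau\cup\sigma\in\cP(K)$ if and only if $\tau\cup\{v\}\in\cP(K)$ for every $v\in\sigma$.

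The forward inclusion $\St(\bsigma)\subseteq\bigcap_v\St(v)$ is immediate and requires no hypothesis on $K$: if $\tau\cup\sigma$ spans a simplex of $K$, then for each vertex $v\in\sigma$, the subset $\tau\cup\{v\}\subseteq\tau\cup\sigma$ also spans a face, so $\btau\in\St(v)$.

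The reverse inclusion is where the flag hypothesis enters. Assume $\btau\in\St(v)$ for every vertex $v\in\Vert(\bsigma)$; I want to conclude $\tau\cup\sigma$ spans a simplex. I will verify that every pair of vertices in $\tau\cup\sigma$ is adjacent in $K$, and then invoke flagness. Pairs inside $\sigma$ are adjacent because $\bsigma$ itself is a simplex; pairs inside $\tau$ are adjacent because $\btau$ is a simplex; and for a mixed pair $w\in\tau$, $v\in\sigma$, the set $\{v,w\}$ is contained in $\tau\cup\{v\}$, which by hypothesis spans a simplex, so $\{v,w\}$ is an edge of $K$. Because $K$ is a flag complex, the pairwise adjacent vertex set $\tau\cup\sigma$ spans a simplex, i.e.\ $\btau\in\St(\bsigma)$.

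No step looks like a serious obstacle — the argument is essentially a direct unwinding of the definitions. The only place where any content is used is in the reverse inclusion, where the passage from pairwise adjacency to a common simplex is precisely the defining property of a flag complex; the example of the boundary of an octahedron (noted right before the proposition) confirms that this hypothesis is genuinely needed.
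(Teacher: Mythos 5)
Your proof is correct and rests on the same key idea as the paper's: reduce the reverse inclusion to checking that the relevant vertex set is pairwise adjacent and then invoke the flag condition. The only difference is organizational — you apply flagness once to all of $\tau\cup\sigma$ directly, whereas the paper argues by induction on $\dim\bsigma$, applying flagness at each step; your version is a slight streamlining of the same argument.
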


\begin{proof}{}
The inclusion $\St(\bsigma)\subseteq\bigcap_{v\in\Vert(\bsigma)}\St(v)$
is clear.  We prove the opposite inclusion by induction on
$\dim\bsigma$.  The case $\dim\bsigma=0$ is clear.  If $\dim\bsigma>0$,
then $\bsigma$ can be written as the span of some vertex $v_0$ and some
simplex $\btau$ such that $\dim\btau<\dim\bsigma$.  By induction, we have 
\[\St(\btau)=\bigcap_{v\in\Vert(\btau)}\St(v)\]
so it suffices to prove $\St(v_0)\cap\St(\btau)\subseteq\St(\bsigma)$.
Suppose that $\brho$ is a simplex in $\St(v_0)\cap\St(\btau)$.  Then
$\brho$ and $v_0$ span a simplex in $K$, and $\brho$ and $\btau$ span a
simplex in $K$.  Since $v_0$ and $\btau$ also span a simplex, we
know that $\{v_0\}\cup\Vert(\btau)\cup\Vert(\brho)$ is a set of pairwise
adjacent vertices.  Since $K$ is a flag complex, this set of vertices
must span a simplex $\tilde{\brho}$.  But since $\bsigma$ and $\brho$
also span the simplex $\tilde{\brho}$, we have
$\brho\subseteq\St(\bsigma)$, which completes the proof.
\end{proof}  
 
\begin{proposition}\label{prop:ecapstar}
Let $K$ be a flag complex and let $\bsigma$ and $\btau$ be any
two simplices of $K$.  Set $\btau'=\btau\cap E(\bsigma)$ and
$\bsigma'=\bsigma-(\bsigma\cap\btau)$.   Then  
\[E(\bsigma)\cap\St(\btau)=
\left\{\begin{array}{ll}
\btau'\ast(\Lk(\btau)\cap E(\bsigma)) & \mbox{if $\btau'\neq\emptyset$}\\
E(\bsigma',\Lk(\btau)) & \mbox{if $\btau'=\emptyset$}
\end{array}\right..\]
\end{proposition}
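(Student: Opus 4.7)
The plan is to analyze $E(\bsigma)\cap\St(\btau)$ simplex by simplex using the join decomposition $\St(\btau)=\btau\ast\Lk(\btau)$. Because $K$ is flag, $E(\bsigma)$ is the full subcomplex of $K$ on $\Vert(K)\setminus\Vert(\St(\bsigma))$, so $\brho\in E(\bsigma)$ is equivalent to the vertex condition that no vertex of $\brho$ spans a simplex with $\bsigma$. Every simplex $\brho\subseteq\St(\btau)$ has a unique join decomposition $\brho=\brho_1\ast\brho_2$ with $\brho_1=\brho\cap\btau$ and $\brho_2\subseteq\Lk(\btau)$, and the vertex condition splits as $\brho_1\subseteq\btau\cap E(\bsigma)=\btau'$ together with $\brho_2\subseteq\Lk(\btau)\cap E(\bsigma)$.

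If $\btau'\neq\emptyset$, the splitting is two-sided: any pair $(\brho_1,\brho_2)$ with $\brho_1\subseteq\btau'$ and $\brho_2\subseteq\Lk(\btau)\cap E(\bsigma)$ recombines to give a simplex of $E(\bsigma)\cap\St(\btau)$, since $\brho_1\subseteq\btau$ and $\brho_2\subseteq\Lk(\btau)$ automatically span a simplex inside $\St(\btau)=\btau\ast\Lk(\btau)$. This identifies $E(\bsigma)\cap\St(\btau)$ with $\btau'\ast(\Lk(\btau)\cap E(\bsigma))$, settling the first case.

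If $\btau'=\emptyset$, then $\brho_1$ is forced to be empty, so $E(\bsigma)\cap\St(\btau)=\Lk(\btau)\cap E(\bsigma)$, and the task is to match this with $E(\bsigma',\Lk(\btau))$. First I would check that $\bsigma'$ really is a simplex of $\Lk(\btau)$: by hypothesis every vertex of $\btau$ spans a simplex with $\bsigma$ and is therefore, by flagness, adjacent in $K$ to every vertex of $\bsigma\setminus\btau=\bsigma'$; since $\bsigma'\cap\btau=\emptyset$, this yields $\bsigma'\cup\btau\in N$ and hence $\bsigma'\subseteq\Lk(\btau)$. Next, both candidate subcomplexes are full inside $\Lk(\btau)$ (by flagness together with Proposition~\ref{prop:flagstar}), so it suffices to match vertex sets. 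For $v\in\Vert(\Lk(\btau))$, the condition ``$v$ spans a simplex with $\bsigma$'' is equivalent to ``$v$ is adjacent to every vertex of $\bsigma\setminus\{v\}$''; because membership in $\Lk(\btau)$ already forces adjacency to every vertex of $\btau\supseteq\bsigma\cap\btau$, this reduces to adjacency to every vertex of $\bsigma'\setminus\{v\}$, i.e., to $v$ spanning a simplex with $\bsigma'$ inside $\Lk(\btau)$. Complementing in $\Vert(\Lk(\btau))$ gives the required equality.

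The main obstacle is the bookkeeping of the second case: translating the global condition ``$\brho$ avoids $\St(\bsigma)$ in $K$'' into the intrinsic condition ``$\brho$ avoids $\St(\bsigma',\Lk(\btau))$ inside the link.'' The flag hypothesis is essential in three distinct places here --- it makes the relevant stars full subcomplexes (so equality of subcomplexes reduces to equality of vertex sets), it lets the vertices of $\bsigma\cap\btau$ be absorbed for free through the link condition, and it is what produces $\bsigma'\subseteq\Lk(\btau)$ in the first place. Once these observations are in place, the join decomposition immediately delivers both cases.
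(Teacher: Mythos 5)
Your proof is correct and follows essentially the same route as the paper: the join decomposition $\St(\btau)=\btau\ast\Lk(\btau)$ together with fullness of $E(\bsigma)$ handles the first case, and the second case comes down to absorbing adjacency to $\btau$ (hence to $\bsigma\cap\btau$) via flagness, exactly as in the paper's argument. Your reduction to vertex sets via fullness is just a minor reorganization of the paper's simplex-by-simplex comparison of $\St(\bsigma)$ with $\St(\bsigma')$.
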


\begin{proof}{}
Since $\St(\btau)=\btau\ast\Lk(\btau)$ and $E(\bsigma)$ is a full subcomplex, we have
\[E(\bsigma)\cap\St(\btau)=(\btau\cap E(\bsigma))\ast(\Lk(\btau)\cap
E(\bsigma))=\btau'\ast(\Lk(\btau)\cap E(\bsigma))\]
(Figure~\ref{fig:eb1}).
\begin{figure}[ht]
\begin{center}
\psfrag{s}{$\bsigma$}
\psfrag{t}{$\btau$}
\psfrag{t'}{$\btau'$}
\psfrag{Lk}{$Lk(\btau)$}
\psfrag{St}{$\St(\btau)$}
\psfrag{E}{$E(\bsigma)$}
\psfrag{LkcapE}{$\Lk(\btau)\cap E(\bsigma)$}
\psfrag{EcapSt}{$E(\bsigma)\cap\St(\btau)$}
\includegraphics[scale = .8]{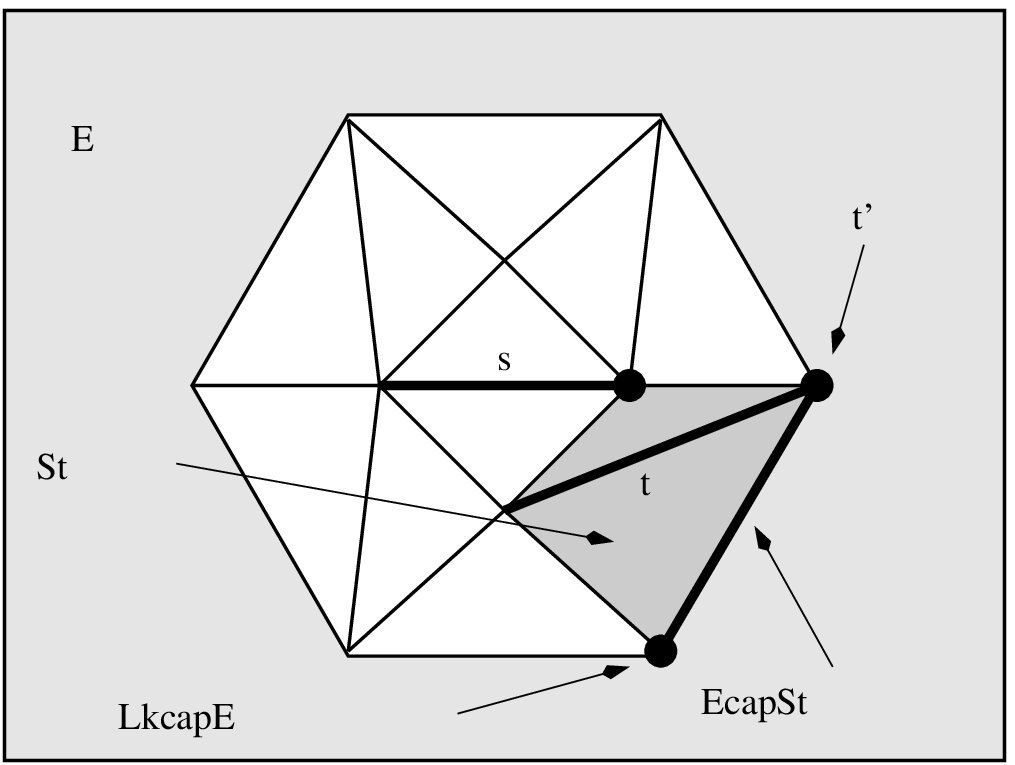}
\caption{\label{fig:eb1}}
\end{center}
\end{figure}
It suffices, then, to show that if $\btau'=\emptyset$ then  
\begin{eqnarray}\label{eq:lke}
\Lk(\btau)\cap E(\bsigma)=E(\bsigma',\Lk(\btau)).
\end{eqnarray}
Suppose $\btau'=\emptyset$.  Then $\btau\subseteq\St(\bsigma)$, so $\btau$
and $\bsigma$ span a simplex in $K$.  By definition of $\bsigma'$, this
simplex can be written as the join $\btau\ast\bsigma'$.
(Figure~\ref{fig:eb2} gives an illustration for the case where $\btau$
is a face of $\bsigma$, in which case $\btau$ and $\bsigma'$ span the
simplex $\bsigma$.)
\begin{figure}[ht]
\begin{center}
\psfrag{s}{$\bsigma$}
\psfrag{t}{$\btau$}
\psfrag{s'}{$\bsigma'$}
\psfrag{StLk}{$\St(\bsigma',\Lk(\btau))$}
\psfrag{St}{$\St(\btau)$}
\psfrag{E}{$E(\bsigma)$}
\psfrag{EcapSt}{$E(\bsigma)\cap\St(\btau)$}
\includegraphics[scale = .8]{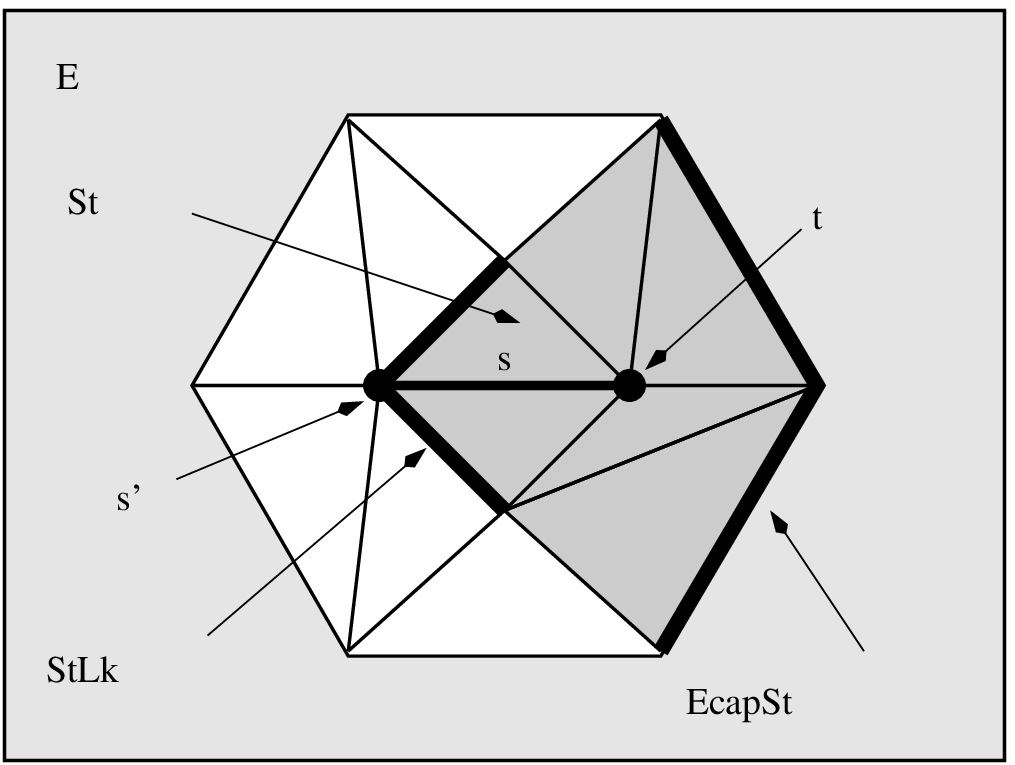}
\caption{\label{fig:eb2}}
\end{center}
\end{figure}
To prove (\ref{eq:lke}), we suppose $\brho$ is a simplex in $\Lk(\btau)$.  
We need to show that 
$\brho\cap\St(\bsigma)=\emptyset$ if and only if
$\brho\cap\St(\bsigma')=\emptyset$.  Since
$\St(\bsigma)\subseteq\St(\bsigma')$, one implication ($\Leftarrow$) is
obvious.  For the other direction, suppose
$\brho\cap\St(\bsigma')\neq\emptyset$, and pick a vertex $v$ in 
$\brho\cap\St(\bsigma')$.  Then $v$ and $\btau$ span a simplex in $K$ as
do $v$ and $\bsigma'$.  But since $\btau$ and $\bsigma'$ span a simplex
and $K$ is a flag complex, we know that $v$, $\btau$, and $\bsigma'$
span a simplex.  But $\bsigma$ must be a face of this simplex (since
it's a face of $\btau\ast\bsigma'$).  It follows that $v$ and $\bsigma$
span a simplex, so $v\in\St(\bsigma)$.  Hence
$\brho\cap\St(\bsigma)\neq\emptyset$, and this completes the proof.
\end{proof}

Surprisingly, the anti-incidence matrix turns out to be an
involution when $K$ is a sphere, and this fact lies at the heart of
our reciprocity formulas.   In fact, $K$ need only resemble a
sphere up to Euler characteristics in the following sense.

\begin{definition}
A simplicial complex $K$ is an {\em Eulerian sphere} of dimension $k$,
if $\bar{\bchi}(K)=(-1)^k$ and for every $\bsigma\in K$,
$\bar{\bchi}(\Lk(\bsigma))=(-1)^{k-1-\dim\bsigma}$.
If, in addition, $K$ is a flag complex, we shall call it an {\em
  Eulerian flag complex}.
\end{definition}

\begin{remark}
If $K$ is an Eulerian sphere, then so is the link of any simplex in
$K$.
\end{remark}

\begin{remark}
A simplicial complex $K$ is an Eulerian sphere if and only if the
poset $\cP(K)$ (with a single maximal element $\hat{1}$ added) is an
{\em Eulerian poset} in the sense of \cite{Stanley}.
\end{remark}

\begin{example}
Obviously if $K$ is a triangulation of a sphere, then it is Eulerian.
For other examples, one can take $K$ to be two disjoint circles (or
their suspension). 
\end{example}

The connection between Coxeter groups whose nerve is Eulerian
and reciprocity formulas for growth series of those Coxeter
groups is well-known (\cite{CD}) when the series is defined over 
a {\em commutative} ring.  But the matrix $J$ seems to be
new, and not only yields new proofs of some of these known formulas, but
also provides the key to reciprocity formulas for noncommutative power
series.  Not surprisingly, the relevant facts about $J$ are related to
various Euler characteristics of $K$ and its subcomplexes.

\begin{lemma}\label{lem:BEchi1}
Let $K$ be a flag complex and let $\bsigma$ be a nonempty simplex in
$K$.  Then $\bar{\bchi}(B(\bsigma))=0$.   If in addition $K$ is Eulerian,
then $\bar{\bchi}(E(\bsigma))=0$.
\end{lemma}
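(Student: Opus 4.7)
The proof splits naturally into the two claims, with the second requiring induction on $k=\dim K$.

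For the first claim, $\bar{\bchi}(B(\bsigma))=0$, I would apply inclusion--exclusion to the covering $B(\bsigma) = \bigcup_{v\in\Vert(\bsigma)}\St(v)$. Proposition~\ref{prop:flagstar} (using that $K$ is flag) ensures that for every nonempty $T\subseteq\Vert(\bsigma)$ the intersection $\bigcap_{v\in T}\St(v)$ equals $\St(\btau_T)$, where $\btau_T$ is the face of $\bsigma$ spanned by $T$. Each such star is a cone over its link and has Euler characteristic $1$, so inclusion--exclusion collapses to
\[ \bchi(B(\bsigma)) = \sum_{\emptyset\neq T\subseteq\Vert(\bsigma)}(-1)^{|T|-1}\cdot 1 = 1, \]
giving $\bar{\bchi}(B(\bsigma))=0$.

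For the second claim, $\bar{\bchi}(E(\bsigma))=0$ when $K$ is Eulerian, I would induct on $k$. The plan is to partition the nonempty simplices of $K$ into three classes by their intersection with $S(\bsigma)=\Vert(\St(\bsigma))$ --- those lying in $\St(\bsigma)$, those lying in $E(\bsigma)$, and the mixed class whose alternating sum I call $N$ --- yielding $\bchi(K)=\bchi(\St(\bsigma))+\bchi(E(\bsigma))+N$. Each mixed simplex $\brho$ factors uniquely as $\brho=\btau\ast\brho'$ with $\btau=\brho\cap S(\bsigma)$ a nonempty simplex in $\St(\bsigma)$ and $\brho'=\brho-\btau$ a nonempty simplex in $E(\bsigma)\cap\St(\btau)$, so summing first over $\brho'$ gives
\[ N = -\sum_{\btau\in\St(\bsigma),\,\btau\neq\emptyset}(-1)^{\dim\btau}\,\bchi(E(\bsigma)\cap\St(\btau)). \]
Since $\btau\subseteq S(\bsigma)$ forces $\btau\cap E(\bsigma)=\emptyset$, the $\btau'=\emptyset$ branch of Proposition~\ref{prop:ecapstar} applies and identifies $E(\bsigma)\cap\St(\btau)$ with $E(\bsigma',\Lk(\btau))$ for $\bsigma'=\bsigma\setminus\btau$. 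Because $\Lk(\btau)$ is a flag Eulerian sphere of dimension $k-1-\dim\btau<k$ and $\bsigma'$ is a simplex in it, the inductive hypothesis yields $\bchi(E(\bsigma',\Lk(\btau)))=1$ when $\bsigma'\neq\emptyset$ and $0$ when $\bsigma'=\emptyset$ (equivalently, $\bsigma\subseteq\btau$). Isolating this vanishing contribution by parameterizing $\btau\supseteq\bsigma$ as $\btau=\bsigma\ast\bchi$ with $\bchi\in\Lk(\bsigma)\cup\{\emptyset\}$ and using the Eulerian identity $\bar{\bchi}(\Lk(\bsigma))=(-1)^{k-1-\dim\bsigma}$, this correction evaluates to $(-1)^k$. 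Combined with $\bchi(\St(\bsigma))=1$ (a cone) and $\bchi(K)=1+(-1)^k$, the equation collapses to $\bchi(E(\bsigma))=1$, so $\bar{\bchi}(E(\bsigma))=0$.

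The main obstacle I anticipate is the bookkeeping in the inductive step: verifying that $\brho\mapsto(\btau,\brho')$ really is a bijection on the mixed class with $\btau\ast\brho'$ automatically in $K$ (because $\brho$ is), checking that Proposition~\ref{prop:ecapstar} is always in its $\btau'=\emptyset$ branch here, and handling the degenerate case $\bsigma\subseteq\btau$ where $E(\emptyset,\Lk(\btau))$ is the empty complex with vanishing Euler characteristic. The induction closes because links of simplices in a flag Eulerian sphere are themselves flag Eulerian spheres of strictly smaller dimension, so the hypothesis on $\Lk(\btau)$ is legitimate.
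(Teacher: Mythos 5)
Your proof of the first claim is the same as the paper's (inclusion--exclusion over the stars of the vertices of $\bsigma$, using Proposition~\ref{prop:flagstar} and contractibility of stars), but your proof of the second claim takes a genuinely different route, and it checks out. The paper proves $\bar{\bchi}(E(\bsigma))=0$ by citing Stanley's complementation theorem for Eulerian posets (reduced Euler characteristic of a subcomplex of an Eulerian sphere equals, up to sign, that of its open complement) applied to $Q=\St(\bsigma)$, and then constructing a piecewise-linear deformation retraction of $K\setminus\St(\bsigma)$ onto $E(\bsigma)$. You instead give a self-contained double count: partitioning the simplices of $K$ according to their intersection with $\Vert(\St(\bsigma))$ (legitimate because $\St(\bsigma)$ and $E(\bsigma)$ are full subcomplexes of the flag complex $K$), reorganizing the mixed class via the join decomposition $\brho=\btau\ast\brho'$, invoking the $\btau'=\emptyset$ branch of Proposition~\ref{prop:ecapstar}, and closing the loop by induction on dimension since $\Lk(\btau)$ is a flag Eulerian sphere of strictly smaller dimension. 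I verified the arithmetic: $N=-(1-(-1)^k)$, $\bchi(\St(\bsigma))=1$, $\bchi(K)=1+(-1)^k$, giving $\bchi(E(\bsigma))=1$. Your approach buys self-containedness (no appeal to Stanley or to the topological retraction, and it uses the Eulerian hypothesis only through the recursive link condition), at the cost of more bookkeeping; it also effectively pre-proves, by induction on dimension, the computation of $\bar{\bchi}(E(\bsigma)\cap\St(\btau))$ that the paper establishes in the lemma immediately following this one --- and since your induction descends to $\Lk(\btau)$ rather than quoting that later lemma for $K$ itself, there is no circularity. Two cosmetic cautions: your use of $\bchi$ as the name of a simplex in $\Lk(\bsigma)$ collides with the Euler characteristic symbol, and you should state the induction hypothesis as ranging over all flag Eulerian spheres of strictly smaller (actual) dimension so that the base case (the empty link, or the two-point $0$-sphere) is covered.
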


\begin{proof}{}
Applying inclusion-exclusion and Proposition~\ref{prop:flagstar} to 
\[B(\bsigma)=\bigcup_{v\in\Vert(\bsigma)}\St(v)\]
we obtain
\[\bar{\bchi}(B(\bsigma))=\sum_{\btau\subseteq\bsigma,\btau\neq\emptyset}
(-1)^{\dim\btau}\bar{\bchi}(\St(\btau)).\]
Since $\bar{\bchi}(\St(\btau))=0$  ($\St(\btau)$ is contractible), this
simplifies to $0$.   

The proof that $\bar{\bchi}(E(\bsigma))=0$ follows from Proposition~3.14.5 of
\cite{Stanley} which says that up to sign, the reduced Euler
characteristic of a subcomplex $Q$ of an Eulerian sphere $K$ coincides
with the reduced Euler characteristic of the (topological) complement
of $Q$ in $K$ (we denote this open subset of $K$ by $K\setminus Q$ to
distinguish it from the closed subcomplex $K-Q$).  In the
present case, we take $Q$ to be $\St(\bsigma)$, which has reduced Euler
characteristic $0$, hence so does $K\setminus\St(\bsigma)$.  We claim that
$K\setminus\St(\bsigma)$ has a piecewise 
linear deformation retract onto $E(\bsigma)=K-\St(\bsigma)$.  We define this retract
simplex by simplex.  If $\btau\subset E(\bsigma)$, then the retract
restricted to $\btau$ is the identity map.  If $\btau\not\subset
E(\bsigma)$, then  
$\btau$ must have a nontrivial intersection with $\St(\bsigma)$.  In
fact, since $\St(\bsigma)$ is a full subcomplex of $K$, this
intersection must be a face of $\btau$.  It follows that $\btau$ can be
written as the join $\btau_1\ast\btau_2$ where
$\btau_1=\btau\cap\St(\bsigma)$ and $\btau_2=\btau\cap E(\bsigma)$.  The
intersection of $\btau$ with $K\setminus \St(\bsigma)$ is $\btau\setminus
\btau_1$, and the retract restricted to $\btau\setminus\btau_1$ is the
standard retract onto the face $\btau_2$ along the join lines.  These
retracts on simplices glue together to give a well-defined deformation
retract from $K\setminus\St(\bsigma)$ onto $E(\bsigma)$.  It follows that
$E(\bsigma)$ also has reduced Euler characteristic $0$, which completes
the proof.     
\end{proof}

In fact, when $K$ is Eulerian we can also compute Euler
characteristics for the subcomplexes $E(\bsigma)\cap B(\btau)$.  It
turns out that these are precisely the numbers that appear when we
square the anti-incidence matrix.  First we consider the Euler
characteristics of the intersections $E(\bsigma)\cap\St(\btau)$ when $K$
is Eulerian.

\begin{lemma} 
Let $K$ be an Eulerian flag complex, and let $\bsigma$
and $\btau$ be any two nonempty simplices in $K$.  Then
\[\bar{\bchi}(E(\bsigma)\cap\St(\btau))=\left\{\begin{array}{ll}
-1 & \mbox{if $\bsigma\subseteq\btau$}\\
0 & \mbox{if $\bsigma\not\subseteq\btau$}\\
\end{array}\right.\]
\end{lemma}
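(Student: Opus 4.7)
The plan is to apply Proposition~\ref{prop:ecapstar}, which already expresses $E(\bsigma)\cap\St(\btau)$ in one of two explicit forms depending on whether $\btau':=\btau\cap E(\bsigma)$ is empty. I split the argument along that dichotomy and reduce each case to tools already in the paper.

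In the case $\btau'\neq\emptyset$, Proposition~\ref{prop:ecapstar} gives $E(\bsigma)\cap\St(\btau)=\btau'\ast(\Lk(\btau)\cap E(\bsigma))$. Since $\btau'$ is a nonempty face of the simplex $\btau$, it is contractible, so $\bar{\bchi}(\btau')=0$, and the standard join formula $\bar{\bchi}(A\ast B)=-\bar{\bchi}(A)\bar{\bchi}(B)$ immediately yields $\bar{\bchi}(E(\bsigma)\cap\St(\btau))=0$. I also record that this case forces $\bsigma\not\subseteq\btau$: if $\bsigma\subseteq\btau$ then every vertex $v\in\btau$ lies in $\St(\bsigma)$ (because $\{v\}\cup\bsigma$ is a face of $\btau$), which forces $\btau'=\emptyset$.

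In the case $\btau'=\emptyset$, Proposition~\ref{prop:ecapstar} identifies $E(\bsigma)\cap\St(\btau)$ with $E(\bsigma',\Lk(\btau))$, where $\bsigma'=\bsigma-(\bsigma\cap\btau)$. If $\bsigma\subseteq\btau$, then $\bsigma'=\emptyset$ and this is the empty simplicial complex, whose reduced Euler characteristic is $(-1)^{-1}=-1$; this produces the $-1$ in the statement. If instead $\bsigma\not\subseteq\btau$, then $\bsigma'$ is nonempty, and the key verification is that $\bsigma'$ is actually a simplex of $\Lk(\btau)$. Since $\btau'=\emptyset$, every vertex of $\btau$ lies in $\St(\bsigma)$, hence is adjacent to every vertex of $\bsigma'$; combined with the internal adjacencies of $\btau$ and of $\bsigma'$, the set $\btau\cup\bsigma'$ is pairwise adjacent and so, by flagness of $K$, spans a simplex. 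Since $\Lk(\btau)$ is itself a flag Eulerian complex (the link of a simplex in an Eulerian sphere is again Eulerian, by the remark), Lemma~\ref{lem:BEchi1} applied to the nonempty simplex $\bsigma'$ in $\Lk(\btau)$ yields $\bar{\bchi}(E(\bsigma',\Lk(\btau)))=0$.

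The hardest step is verifying that $\bsigma'$ is a simplex of $\Lk(\btau)$ in the subcase $\btau'=\emptyset$ and $\bsigma\not\subseteq\btau$: this is the one place where the flag hypothesis is essential, without which the reduction to Lemma~\ref{lem:BEchi1} would fail. A minor bookkeeping point is remembering that the empty simplicial complex has reduced Euler characteristic $-1$ rather than $0$; this is precisely what supplies the nonzero value in the statement.
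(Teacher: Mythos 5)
Your proof is correct and follows essentially the same route as the paper: split on whether $\btau'=\btau\cap E(\bsigma)$ is empty via Proposition~\ref{prop:ecapstar}, kill the join case because a join with a nonempty simplex is contractible, and handle the case $\btau'=\emptyset$ by identifying the intersection with $E(\bsigma',\Lk(\btau))$ and invoking Lemma~\ref{lem:BEchi1} in the Eulerian flag complex $\Lk(\btau)$ (or reading off $-1$ for the empty complex when $\bsigma'=\emptyset$). Your extra check that $\bsigma'$ is a simplex of $\Lk(\btau)$ is a point the paper leaves implicit in Proposition~\ref{prop:ecapstar}, but it is the same argument.
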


\begin{proof}{}
If $\btau\subseteq\St(\bsigma)$, then by
Proposition~\ref{prop:ecapstar}, $E(\bsigma)\cap\St(\btau)$ is
$\emptyset$ when $\bsigma\subseteq\btau$ (hence has reduced Euler
characteristic $-1$), and is of the form $E(\bsigma',\Lk(\btau))$ with
$\bsigma'\neq\emptyset$ otherwise.  Since $\Lk(\btau)$ is also an Eulerian
flag complex, it follows from Lemma~\ref{lem:BEchi1} that in the
latter case, the reduced Euler characteristic is $0$.  

If $\btau\not\subseteq\St(\bsigma)$, then $\btau'=\btau\cap
E(\bsigma)\neq\emptyset$ so by
Proposition~\ref{prop:ecapstar}, $E(\bsigma)\cap\St(\btau)$ is the join
of $\btau'$ with some other subcomplex.  Such a join is always
contractible, hence has reduced Euler characteristic $0$. 
\end{proof}

We can now use inclusion-exclusion and the previous lemma to compute
the Euler characteristics of $E(\bsigma)\cap B(\btau)$. 

\begin{lemma} 
Let $K$ be a an Eulerian flag complex, and let $\bsigma$
and $\btau$ be any two nonempty simplices in $K$.  Then
$\bar{\bchi}(E(\bsigma)\cap B(\btau))$ is equal to $0$ if
$\bsigma\neq\btau$ and is equal to $(-1)^{1+\dim\bsigma}$ if
$\bsigma=\btau$. 
\end{lemma}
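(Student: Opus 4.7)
{}
The plan is to reduce this computation to the previous lemma via inclusion-exclusion. Since $B(\btau)=\bigcup_{v\in\Vert(\btau)}\St(v)$, we have
\[E(\bsigma)\cap B(\btau)=\bigcup_{v\in\Vert(\btau)}\bigl(E(\bsigma)\cap\St(v)\bigr).\]
For any nonempty $W\subseteq\Vert(\btau)$, the set $W$ spans a face $\btau_W$ of $\btau$, and by Proposition~\ref{prop:flagstar} (applied inside the flag complex $K$) one has $\bigcap_{v\in W}\St(v)=\St(\btau_W)$. Hence $\bigcap_{v\in W}(E(\bsigma)\cap\St(v))=E(\bsigma)\cap\St(\btau_W)$.

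Next I would invoke inclusion-exclusion for the reduced Euler characteristic. A quick bookkeeping from $\bchi=\bar{\bchi}+1$ together with $\sum_{\emptyset\neq W\subseteq\Vert(\btau)}(-1)^{|W|+1}=1$ gives the usual form
\[\bar{\bchi}\bigl(E(\bsigma)\cap B(\btau)\bigr)=\sum_{\emptyset\neq\btau'\subseteq\btau}(-1)^{\dim\btau'}\,\bar{\bchi}\bigl(E(\bsigma)\cap\St(\btau')\bigr),\]
where the sign comes from $(-1)^{|W|+1}=(-1)^{\dim\btau_W}$. Applying the previous lemma, each term is $-1$ when $\bsigma\subseteq\btau'$ and $0$ otherwise, so
\[\bar{\bchi}\bigl(E(\bsigma)\cap B(\btau)\bigr)=-\sum_{\bsigma\subseteq\btau'\subseteq\btau}(-1)^{\dim\btau'}.\]

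Finally, I would evaluate this sum. If $\bsigma\not\subseteq\btau$, it is empty, giving $0$. If $\bsigma\subseteq\btau$, setting $d=|\Vert(\btau)-\Vert(\bsigma)|$ and grouping $\btau'$ by the number $j$ of extra vertices, the sum becomes
\[-(-1)^{\dim\bsigma}\sum_{j=0}^{d}\binom{d}{j}(-1)^{j}=-(-1)^{\dim\bsigma}(1-1)^{d},\]
which vanishes when $d>0$ (i.e.\ $\bsigma\subsetneq\btau$) and equals $(-1)^{1+\dim\bsigma}$ when $d=0$ (i.e.\ $\bsigma=\btau$).

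The only genuine wrinkle is verifying that inclusion-exclusion goes through cleanly for $\bar{\bchi}$ and that the flag property is what lets $\bigcap_{v\in W}\St(v)$ collapse to a single star; both are short, and the arithmetic with the alternating binomial sum is routine, so I do not anticipate a serious obstacle beyond carefully tracking signs.
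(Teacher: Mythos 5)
Your proof is correct and takes essentially the same route as the paper: inclusion--exclusion over the nonempty faces of $\btau$ via Proposition~\ref{prop:flagstar}, reduction to the previous lemma on $\bar{\bchi}(E(\bsigma)\cap\St(\btau'))$, and evaluation of the alternating sum $\sum_{\bsigma\subseteq\btau'\subseteq\btau}(-1)^{\dim\btau'}$. The only difference is organizational: you expand the intersection $E(\bsigma)\cap B(\btau)=\bigcup_{v}(E(\bsigma)\cap\St(v))$ directly, whereas the paper expands the union $E(\bsigma)\cup B(\btau)=\bigcup_{v}(E(\bsigma)\cup\St(v))$ and then converts back to the intersection using $\bar{\bchi}(E(\bsigma))=\bar{\bchi}(B(\btau))=0$ from Lemma~\ref{lem:BEchi1}; your version is marginally more direct but uses the same key ingredients.
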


\begin{proof}{}
We first calculate the Euler characteristic of the union 
\[E(\bsigma)\cup
B(\btau)=\bigcup_{v\in\Vert(\bsigma)}(E(\bsigma)\cup\St(v)).\]
Applying inclusion-exclusion and Proposition~\ref{prop:flagstar}, we
have 
\begin{eqnarray}\label{eq:union}
\bar{\bchi}(E(\bsigma)\cup B(\btau))
=\sum_{\brho\subseteq\btau,\brho\neq\emptyset}(-1)^{\dim\brho} 
\bar{\bchi}(E(\bsigma)\cup\St(\brho))
\end{eqnarray}
Using the previous lemma and the fact that
\begin{eqnarray*}
\bar{\bchi}(E(\bsigma)\cup\St(\brho))& =
&\bar{\bchi}(E(\bsigma))+\bar{\bchi}(\St(\brho))-\bar{\bchi}(E(\bsigma)\cap\St(\brho))\\
& = & -\bchi(E(\bsigma)\cap\St(\brho)),
\end{eqnarray*}
we have 
\[\bar{\bchi}(E(\bsigma)\cup\St(\brho))=\left\{\begin{array}{ll}
0 & \mbox{if $\bsigma\not\subseteq\brho$}\\
1 & \mbox{if $\bsigma\subseteq\brho$}\end{array}\right.\]
Substituting into (\ref{eq:union}) gives 
\begin{eqnarray*}
\bar{\bchi}(E(\bsigma)\cup B(\btau))
& = & \sum_{\bsigma\subseteq\brho\subseteq\btau}(-1)^{\dim\brho}.
\end{eqnarray*}
This evaluates to $0$ if $\bsigma\neq\btau$ and evaluates to
$(-1)^{\dim\bsigma}$ if $\bsigma=\btau$.   
To complete the proof, we use the identity
\begin{eqnarray*}
\bar{\bchi}(E(\bsigma)\cap B(\btau))& =
&\bar{\bchi}(E(\bsigma))+\bar{\bchi}(B(\btau))-\bar{\bchi}(E(\bsigma)\cup
B(\btau))\\
& = & -\bchi(E(\bsigma)\cup B(\btau)),
\end{eqnarray*}
\end{proof}

We now come to the main result of the section.

\begin{theorem}\label{thm:j}
Let $K$ be a flag complex and let $J=J(K)$ be the anti-incidence
matrix for $K$.  Then the columns of $J$ all have sum $\bar{\bchi}(K)$.
If, in addition, $K$ is Eulerian then $J^2=\Id$.
\end{theorem}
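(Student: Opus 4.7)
The plan is to reduce both claims to the Euler-characteristic computations already carried out for the subcomplexes $B(\btau)$, $E(\bsigma)$, and $E(\bsigma)\cap B(\brho)$. The key bridge between the combinatorial definition of $J_{\bsigma,\btau}$ and these subcomplexes is the symmetric identity
\[ v\in\St(\bsigma)\iff\bsigma\in\St(v), \]
valid for any vertex $v$ and simplex $\bsigma$, since both conditions merely assert that $\{v\}\cup\Vert(\bsigma)$ spans a simplex of $K$. Consequently, for $\btau\neq\emptyset$,
\[ \btau\cap\St(\bsigma)=\emptyset\iff\bsigma\notin\St(v)\text{ for every }v\in\Vert(\btau)\iff\bsigma\notin\cP(B(\btau)); \]
and since $E(\bsigma)$ is a full subcomplex, with $\bsigma$ fixed and $\btau$ varying, the same condition is equivalent to $\btau\in\cP(E(\bsigma))$.

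For the column sum I fix $\btau\neq\emptyset$; the first identification above gives
\[ \sum_{\bsigma}J_{\bsigma,\btau}=\sum_{\bsigma\in\cP(K)\setminus\cP(B(\btau))}(-1)^{\dim\bsigma}=\bar{\bchi}(K)-\bar{\bchi}(B(\btau))=\bar{\bchi}(K), \]
the last equality by Lemma~\ref{lem:BEchi1}. The case $\btau=\emptyset$ is immediate since $J_{\bsigma,\emptyset}=(-1)^{\dim\bsigma}$ for every $\bsigma$ and these sum to $\bar{\bchi}(K)$.

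For $J^2=\Id$, assume $K$ is Eulerian. With $\bsigma,\brho$ both nonempty, combining both identifications yields
\[ (J^2)_{\bsigma,\brho}=(-1)^{\dim\bsigma}\sum_{\btau\in\cP(E(\bsigma))\setminus\cP(B(\brho))}(-1)^{\dim\btau}=(-1)^{\dim\bsigma}\bigl(\bar{\bchi}(E(\bsigma))-\bar{\bchi}(E(\bsigma)\cap B(\brho))\bigr). \]
Lemma~\ref{lem:BEchi1} forces $\bar{\bchi}(E(\bsigma))=0$, and the lemma immediately preceding the theorem evaluates $\bar{\bchi}(E(\bsigma)\cap B(\brho))$ as $0$ for $\bsigma\neq\brho$ and as $(-1)^{1+\dim\bsigma}$ for $\bsigma=\brho$. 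Hence $(J^2)_{\bsigma,\brho}=0$ off the diagonal and $(J^2)_{\bsigma,\bsigma}=(-1)^{\dim\bsigma}\cdot(-1)^{2+\dim\bsigma}=1$.

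The remaining chore is to dispose of the row and column indexed by the empty simplex. Since $J_{\emptyset,\btau}=-\delta_{\btau,\emptyset}$ and $J_{\btau,\emptyset}=(-1)^{\dim\btau}$, a direct computation (using $\bar{\bchi}(E(\bsigma))=0$ for nonempty $\bsigma$ and $\bar{\bchi}(\emptyset)=-1$) shows $(J^2)_{\emptyset,\brho}=\delta_{\emptyset,\brho}$ and $(J^2)_{\bsigma,\emptyset}=\delta_{\bsigma,\emptyset}$. I do not expect a serious obstacle beyond this bookkeeping: the deep combinatorial content sits entirely in Lemma~\ref{lem:BEchi1} and its refinement for $E(\bsigma)\cap B(\brho)$, and once those are in hand the theorem reduces to an inclusion-exclusion argument, with the flag identity above supplying the translation between the matrix entries of $J$ and the subcomplex Euler characteristics.
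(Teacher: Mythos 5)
Your proof is correct and follows essentially the same route as the paper's: identify the nonzero entries of $J$ with the simplices of $E(\bsigma)$ and the complement of $B(\btau)$, then reduce the column sums and the entries of $J^2$ to the reduced Euler characteristics $\bar{\bchi}(B(\btau))$, $\bar{\bchi}(E(\bsigma))$, and $\bar{\bchi}(E(\bsigma)\cap B(\btau))$ computed in the preceding lemmas. Your explicit handling of the empty row and column and your sign bookkeeping (the factor $(-1)^{\dim\bsigma}\cdot(-1)^{2+\dim\bsigma}=1$ on the diagonal) are in fact slightly cleaner than the paper's displayed computation.
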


\begin{proof}{}
The column of $J$ corresponding to $\btau$ has sum
\[\sum_{\btau\cap\St(\bsigma)=\emptyset}(-1)^{\dim\bsigma} = 
\sum_{\bsigma\subset
  K}(-1)^{\dim\bsigma}-\sum_{\btau\cap\St(\bsigma)\neq\emptyset}(-1)^{\dim\bsigma}.\]
This simplifies to $\bar{\bchi}(K)-\bar{\bchi}(B(\btau))$ if
$\btau\neq\emptyset$ and to $\bar{\bchi}(K)-0$ if $\btau=\emptyset$.  Thus,
by Lemma~\ref{lem:BEchi1}, we obtain $\bar{\bchi}(K)$ in either case.

Now assume that $K$ is Eulerian.  The $\bsigma,\btau$-entry of $J^2$ is 
\[\sum_{\brho\cap\St(\bsigma)=\emptyset, \btau\cap\St(\brho)=\emptyset}
(-1)^{\dim\bsigma}(-1)^{\dim\brho}.\]
If $\bsigma=\btau=\emptyset$, this entry simplifies to $1$.  If
$\bsigma=\emptyset\neq\btau$, this entry simplifies to $0$.  If
$\bsigma\neq\emptyset=\btau$, this entry simplifies to 
\[(-1)^{\dim\bsigma}\sum_{\brho\cap\St(\bsigma)=\emptyset}(-1)^{\dim\brho}
=(-1)^{\dim\bsigma}\bar{\bchi}(E(\bsigma))\]
which is equal to $0$ by Lemma~\ref{lem:BEchi1}. 
If $\bsigma$ and $\btau$ are both nontrivial, this entry simplifies to 
\begin{eqnarray*}
(-1)^{\dim\bsigma}\left(\sum_{\brho\cap\St(\bsigma)=\emptyset}(-1)^{\dim\brho} -
\sum_{\brho\cap\St(\bsigma)=\emptyset, \btau\cap\St(\brho)\neq\emptyset}(-1)^{\dim\brho
}\right)\\
=(-1)^{\dim\bsigma}(\bar{\bchi}(E(\bsigma))-\bar{\bchi}(E(\bsigma)\cap
B(\btau)))\\
=(-1)^{\dim\bsigma}\bar{\bchi}(E(\bsigma)\cap B(\btau)),\\
\end{eqnarray*}
which by Lemma~\ref{lem:BEchi1} is equal to $0$ when $\bsigma\neq\btau$ and is 
equal to $1$ when $\bsigma=\btau$.  It follows that $J^2$ is the
identity matrix.   
\end{proof}

\section{Reciprocity for right-angled Coxeter groups}\label{s:cox-rec}

We now return to the proof of our reciprocity formula and combine the
results of the previous two sections.  An important observation 
that makes the series $\chi_{\cL}$ for right-angled Coxeter groups
much more manageable than for arbitrary Coxeter groups is that the
regions of the Tits cone cut out by minimal hyperplanes are actually
in bijection with the nerve.  More precisely, we have the following
(with notation as in Section~\ref{s:cox-rat}). 

\begin{proposition}
Assume that $W$ is a right-angled Coxeter group.  Then the set 
$\cH_{min}$ of minimal hyperplanes is precisely the set of fundamental
hyperplanes.  Moreover, the correspondence $\sigma\mapsto C_{\sigma}$
defines a bijection from the nerve $N$ to the set of regions $\cR$ cut
out by the minimal hyperplanes.
\end{proposition}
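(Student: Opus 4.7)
The plan is to prove the two assertions in sequence: first identify $\cH_{\min}$ explicitly as the set of fundamental hyperplanes, then read off the bijection from the chamber decomposition already established in Proposition~\ref{prop:descent-test}.

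The inclusion $\{H_s : s\in S\}\subseteq \cH_{\min}$ holds for any Coxeter system: if $H'<H_s$ with $H'\neq H_s$, then $H'$ would separate $H_s$ from $C$, which is impossible because $H_s$ is a wall of $C$. For the reverse inclusion I would take a purported non-fundamental $H\in\cH_{\min}$ and derive a contradiction. Write $H=wH_s$ with $|w|$ minimal; then $w\neq 1$, so let $t\in S$ be the first letter of a reduced expression for $w$, and let $r=wsw^{-1}$ be the reflection fixing $H$. Minimality of $|w|$ forces $r$ and $t$ not to commute: if $trt=r$ then $(tw)s(tw)^{-1}=r$, so $twH_s=H$ with $|tw|=|w|-1$, contradiction.

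The right-angled hypothesis now enters crucially. Every finite subgroup of a right-angled Coxeter group stabilizes a point in the interior of $U$ (Tits' theorem) and hence lies in some finite parabolic $W_\sigma\cong(\bbZ/2)^{|\sigma|}$, so every finite subgroup is abelian. Since $\langle r,t\rangle$ is generated by two non-commuting reflections, it must therefore be infinite, and as a rank-two reflection subgroup it is infinite dihedral. By Tits again, $\langle r,t\rangle$ fixes no point in the interior of $U$, so $H\cap H_t\cap\mathrm{int}(U)=\emptyset$, i.e.\ $H$ and $H_t$ do not cross in the interior of $U$. To see which side of $H_t$ contains $H$, note that the chamber $wC$ is adjacent to $H$ and lies in $H_t^-$ (its gallery from $C$ crosses $H_t$ first), so the face of $wC$ along $H$ sits in $H_t^-$; the non-crossing conclusion then forces $H\subseteq H_t^-$, and hence $H_t^+\subseteq H^+$. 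Thus $H_t<H$, contradicting $H\in\cH_{\min}$.

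Once $\cH_{\min}=\{H_s:s\in S\}$ is established, the regions in $\cR$ are the connected components of $U$ cut out by the fundamental hyperplanes, and each is indexed by a subset $\sigma\subseteq S$ recording its sign pattern with respect to the linear functionals $\{h_s\}_{s\in S}$. By Proposition~\ref{prop:basic} this region equals $A_\sigma^-\cap\bigcap_{s\notin\sigma}H_s^+$, which by Proposition~\ref{prop:descent-test} is exactly $C_\sigma$ and consists of the chambers $wC$ with $\desc(w)=\sigma$. By Proposition~\ref{prop:longestelement} descent sets are always spherical, so nonemptiness of the region forces $\sigma\in N$; conversely, for $\sigma\in N$ the longest element $w_\sigma\in W_\sigma$ has descent set exactly $\sigma$, so $C_\sigma$ is nonempty. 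This gives the bijection $\sigma\mapsto C_\sigma$ from $N$ to $\cR$. The main obstacle is the dihedral claim above: in a general Coxeter group two non-commuting reflections can generate a finite dihedral subgroup of order $>4$, allowing non-fundamental minimal hyperplanes; the right-angled hypothesis is used precisely to rule this out via the elementary-abelian structure of finite spherical subgroups.
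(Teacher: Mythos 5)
Your proof is correct. For the main assertion it follows the same template as the paper's: take a non-fundamental $H=wH_s$ with $|w|$ minimal, pick a descent letter of $w$, and split into a commuting case (contradicting minimality of $|w|$) and a non-commuting case (producing a hyperplane strictly below $H$, contradicting $H\in\cH_{min}$). The execution differs in two respects. The paper uses a \emph{right} descent $s'$ of $w$ and compares $H=wH_s$ with the adjacent wall $H'=wH_{s'}$ of the chamber $wC$; since both are $w$-translates of fundamental hyperplanes, the right-angled condition $m(s,s')\in\{2,\infty\}$ immediately makes them perpendicular or parallel, and the two cases kill minimality of $|w|$ and minimality of $H$ respectively. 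You use a \emph{left} descent $t$ and compare $H$ with the fundamental hyperplane $H_t$, feeding the right-angled hypothesis in through group theory: finite subgroups are elementary abelian (via Tits' fixed-point characterization of finite subgroups), so the non-commuting reflections $r=wsw^{-1}$ and $t$ generate an infinite group with no fixed point in the interior of the Tits cone, whence $H$ and $H_t$ cannot cross there and $H_t<H$. Both arguments are sound; yours isolates more explicitly why right-angledness matters (two reflections either commute or generate an infinite group), at the cost of a slightly longer geometric argument to locate $H$ on the correct side of $H_t$, while the paper's angle dichotomy is shorter but more implicit. For the second assertion the paper simply says it follows from the first; your identification of the sign-pattern regions with the $C_\sigma$'s via Propositions~\ref{prop:basic} and \ref{prop:descent-test}, together with Proposition~\ref{prop:longestelement} to see that exactly the spherical sign patterns are realized by nonempty regions, is the intended (and correct) filling-in of that step.
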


\begin{proof}{}
For a right-angled Coxeter group, the regions $C_{\sigma}$ are
precisely those cut out by the fundamental hyperplanes, hence the
second statement follows immediately from the first.  To show that the
fundamental hyperplanes are the only minimal ones, suppose $H$ is a
minimal hyperplane, and let $wC$ be a chamber having $H$ as wall
(i.e., $H\cap wC$ is a codimension-one face of $wC$).  Assume that
$wC$ is chosen so that $w$ has minimal length.  Since $w^{-1}H$ is a
wall of the fundamental chamber, it is a fundamental hyperplane, say
$H_s$.   If $|w|=0$, we're done since $wC=C$, so $H$ is a fundamental
hyperplane.  Otherwise, there exists an $s'\in S$ such that $ws'$ is
shorter than $w$.  Let $H'$ be the hyperplane fixed by the reflection
$r=ws'w^{-1}$.  Then $H$ and $H'$ are $w$ translates of the
fundamental hyperplanes $H_s$ and $H_{s'}$, hence are either
perpendicular or parallel.  If they were parallel, then $H'$ would
separate $H$ from the fundamental chamber, contradicting the
minimality of $H$.  On the other hand, if $H$ and $H'$ were
perpendicular, then the chamber $ws'C$ would still intersect $H$ in a
codimension-one face, contradicting the minimality of the length of
$w$.  Hence $wC$ must, in fact, be the fundamental chamber and $H$ a
fundamental hyperplane.
\end{proof}  

Since $\cR=\{C_{\sigma}\;|\; \sigma\in  N\}$, the recurrence described
in Theorem~\ref{thm:cox-rat} has a simpler description, depending only on the
combinatorics of the nerve.  Recall that the nerve $N$ can be regarded
as an abstract simplicial complex with geometric realization $|N|$.
As in Remark~\ref{rem:sc-cox}, for any abstract simplex $\sigma\in N$,
we let $\bsigma$ denote the corresponding geometric simplex in $|N|$. 

Replacing $R$ with $C_{\tau}$ and using the fact that $R'\subset
C_{\sigma}$ implies $R'=C_{\sigma}$, we obtain the following
simplification of Lemma~\ref{lem:induction}. 
 
\begin{proposition}\label{prop:star}
For $\sigma,\tau\in  N$, $w_{\sigma}C_{\tau}\subseteq C_{\sigma}$ if
and only if $\St(\bsigma)\cap\btau=\emptyset$.
\end{proposition}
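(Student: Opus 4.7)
The plan is to reduce the set-theoretic inclusion $w_\sigma C_\tau \subseteq C_\sigma$ to an assertion about descent sets and then unpack it using the right-angled structure. By Proposition~\ref{prop:descent-test}, $C_\tau$ is the union of chambers $wC$ with $\desc(w) = \tau$, and similarly for $C_\sigma$, so $w_\sigma C_\tau \subseteq C_\sigma$ is equivalent to $\desc(w_\sigma w) = \sigma$ for every $w$ with $\desc(w) = \tau$. (Such a $w$ exists; in the right-angled case $w = w_\tau$ works, since one checks directly that no $s \notin \tau$ shortens $w_\tau$.)

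I would next compute $\desc(w_\sigma w)$ via Proposition~\ref{prop:basic}: $s \in \desc(w_\sigma w)$ iff $wC \subseteq w_\sigma H_s^-$. The essential right-angled geometric fact needed is that whenever $m(s,s') = 2$, the reflection $s'$ fixes $H_s$ setwise and preserves each halfspace $H_s^{\pm}$; this follows directly from $\rho_{s'}(v) = v - 2 h_{s'}(v) v_{s'}$ together with $h_s(v_{s'}) = -\cos(\pi/2) = 0$. A case analysis on $s$ then yields three subcases. For $s \in \sigma$: writing $w_\sigma = s\cdot w_{\sigma \setminus \{s\}}$, the cofactor preserves $H_s^{\pm}$ and the leading $s$ swaps them, so $w_\sigma H_s^- = H_s^+$, hence $s \in \desc(w_\sigma w)$ iff $s \notin \tau$. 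For $s \notin \sigma$ with $\sigma \cup \{s\} \in N$: every factor of $w_\sigma$ commutes with $s$, so $w_\sigma H_s^- = H_s^-$ and $s \in \desc(w_\sigma w)$ iff $s \in \tau$. For $s \notin \sigma$ with $\sigma \cup \{s\} \notin N$: no direct computation is needed, because descent sets are spherical (Proposition~\ref{prop:longestelement}), so once $\sigma \subseteq \desc(w_\sigma w)$ is known, $s \in \desc(w_\sigma w)$ would force $\sigma \cup \{s\} \in N$, a contradiction.

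Piecing the three subcases together, $\desc(w_\sigma w) = \sigma$ holds iff $\sigma \cap \tau = \emptyset$ (to guarantee $\sigma \subseteq \desc(w_\sigma w)$ from Case~1) and no $s \in \tau \setminus \sigma$ satisfies $\sigma \cup \{s\} \in N$ (to exclude extra generators via Case~2a). Since $N$ is a flag complex (Remark~\ref{rem:sc-cox}), the condition $\btau \cap \St(\bsigma) = \emptyset$ says precisely that no vertex $t$ of $\tau$ lies in $\St(\bsigma)$, i.e., $\sigma \cup \{t\} \notin N$ for every $t \in \tau$. This single condition absorbs $\sigma \cap \tau = \emptyset$ (since $t \in \sigma \cap \tau$ would give $\sigma \cup \{t\} = \sigma \in N$) and matches the descent-set conjunction exactly, completing the equivalence. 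The main obstacle is bookkeeping the case distinction; the geometric content is routine once the commuting-reflections fact is in hand.
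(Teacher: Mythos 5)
Your proof is correct, and it shares the paper's first step --- Proposition~\ref{prop:descent-test} converts the inclusion $w_{\sigma}C_{\tau}\subseteq C_{\sigma}$ into a statement about descent sets --- but the second step is implemented differently. The paper computes $\desc(w_{\sigma}w_{\tau})$ for the single chamber $w_{\tau}C$ by a word-combinatorial argument (cancellation of letters in $\sigma\cap\tau$, and commutation of a letter of $\tau$ to the front when it is adjacent to $\sigma$), and then passes from $w_{\sigma}w_{\tau}C\subseteq C_{\sigma}$ to $w_{\sigma}C_{\tau}\subseteq C_{\sigma}$ by leaning on the region structure of Lemma~\ref{lem:induction}. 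You instead compute $\desc(w_{\sigma}w)$ for \emph{every} $w$ with $\desc(w)=\tau$ simultaneously, via Proposition~\ref{prop:basic} and the observation that $m(s,s')=2$ forces $\rho_{s'}$ to preserve the functional $h_s$ and hence the halfspaces $H_s^{\pm}$, while $\rho_s$ swaps them; this makes the quantifier over all chambers of $C_{\tau}$ explicit and removes any reliance on the region-upgrade step, at the cost of a slightly longer case analysis. Your bookkeeping is sound: you correctly flag that the third subcase ($s\notin\sigma$, $\sigma\cup\{s\}\notin N$) is dispatched only conditionally on $\sigma\subseteq\desc(w_{\sigma}w)$, i.e.\ on $\sigma\cap\tau=\emptyset$, which is exactly what the final assembly supplies, and the remark that $w=w_{\tau}$ realizes $\desc(w)=\tau$ guards the ``only if'' direction against a vacuous inclusion. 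The only points worth spelling out are the degenerate cases $\sigma=\emptyset$ or $\tau=\emptyset$, but the paper glosses over these as well, and they cause no trouble.
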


\begin{proof}{}
Note that for a right-angled Coxeter group, $\sigma\in  N$
implies that $w_{\sigma}=\prod_{s\in\sigma}s$ (the order of
multiplication doesn't matter).   First we show that the condition
$\St(\bsigma)\cap\btau=\emptyset$ is equivalent to the statement
$\desc(w_{\sigma}w_{\tau})=\sigma$.  Indeed, $s\in
\St(\bsigma)\cap\btau$ implies that either (1)
$s\not\in\sigma$ and $\{s\}\cup\sigma\in  N$ in which case
$w_{\sigma}w_{\tau}$ has a reduced expression starting with $s$
(hence $s\in\desc(w_{\sigma}w_{\tau})$) or (2)
$s\in\sigma\cap\tau$ in which case the factors of $s$ in
$w_{\sigma}$ and $w_{\tau}$ cancel in the product
$w_{\sigma}w_{\tau}$ (hence $s\not\in\desc(w_{\sigma})$).  In either
case, $\sigma\neq\desc(w_{\sigma}w_{\tau})$. Conversely, if
$\St(\bsigma)\cap\btau=\emptyset$ then (1) the product
$\prod_{s\in\sigma}s\prod_{s\in\tau}s$ is a reduced expression
for $w_{\sigma}w_{\tau}$ (there is no cancellation) and (2) no
element of   $\tau$ can be moved to the front of the product.
Thus the set of $s\in S$ that shorten $w_{\sigma}w_{\tau}$ is
precisely $\sigma$.  

To finish the proof note that $w_{\sigma}C_{\tau}\subset C_{\sigma}$
is equivalent to $w_{\sigma}w_{\tau}C\subseteq C_{\sigma}$, which by
Proposition~\ref{prop:descent-test} is equivalent to
$\desc(w_{\sigma}w_{\tau})=\sigma$.  
\end{proof}

As before, we let $\chi$ denote the characteristic series for the greedy
normal form for $(W,S)$.  Again, we fix a total ordering on $N$ (with
$\emptyset$ as the first element in the order). By
Proposition~\ref{prop:star} and Theorem~\ref{thm:cox-rat}, we have  
\[\chi=A(I-Q)^{-1}B\]
where
 \[A=\left[\begin{array}{cccc}1 & 1 & \cdots &
  1\end{array}\right]\hspace{.5in}
\mbox{and}\hspace{.5in}
B=\left[\begin{array}{cccc}1 & 0 & \cdots &
  0\end{array}\right]^T\]
and $Q$ is the $N\times N$-matrix 
\[Q_{\sigma,\tau}=\left\{\begin{array}{ll}
\sigma & \mbox{if $\sigma\neq\emptyset$ and
  $\St(\bsigma)\cap\btau=\emptyset$}\\ 
0 & \mbox{otherwise}\end{array}\right.\]

\begin{example}\label{ex:pent}
Let $W$ be the group generated by reflections across the sides of a
right-angled pentagon.  That is, $W$ has $5$ generators
$s_1$,$s_2$,$s_3$,$s_4$,$s_5$, and the only relations specify that
each $s_i$ is an involution and that $s_i$ and $s_{j}$ commute
whenever $j=i+1 \bmod 5$.  Figure~\ref{fig:pent} shows the Tits cone
(on the left) and the nerve $N$ (on the right).   
\begin{figure}[ht]
\begin{center}
\psfrag{0}{$\emptyset$}
\psfrag{1}{$1$}
\psfrag{2}{$2$}
\psfrag{3}{$3$}
\psfrag{4}{$4$}
\psfrag{5}{$5$}
\psfrag{12}{$12$}
\psfrag{23}{$23$}
\psfrag{34}{$34$}
\psfrag{45}{$45$}
\psfrag{15}{$15$}
\psfrag{N}{$N$}
\includegraphics[scale = .8]{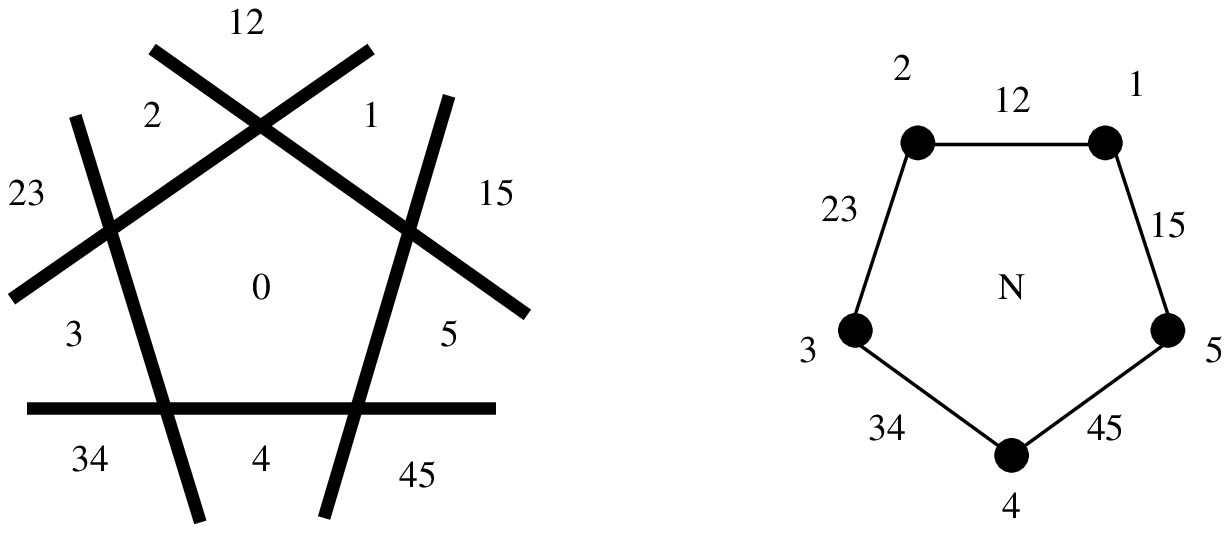}
\caption{\label{fig:pent}}
\end{center}
\end{figure}
The nerve $N$ consists of the subsets $\emptyset$, $\sigma_1=\{s_1\}$,
$\sigma_2=\{s_2\}$, $\sigma_3=\{s_3\}$, $\sigma_4=\{s_4\}$,
$\sigma_5=\{s_5\}$, $\sigma_{12}=\{s_1,s_2\}$, $\sigma_{15}=\{s_1,s_5\}$,
$\sigma_{23}=\{s_2,s_3\}$, $\sigma_{34}=\{s_3,s_4\}$,
and $\sigma_{45}=\{s_4,s_5\}$.
With respect to this order, the matrix $Q$ in this case is 
\[Q=\left[\begin{array}{ccccccccccc}
0&0&0&0&0&0&0&0&0&0&0\\
\sigma_1&0&0&\sigma_1&\sigma_1&0&0&0&0&\sigma_1&0\\
\sigma_2&0&0&0&\sigma_2&\sigma_2&0&0&0&0&\sigma_2\\
\sigma_3&\sigma_3&0&0&0&\sigma_3&0&\sigma_3&0&0&0\\
\sigma_4&\sigma_4&\sigma_4&0&0&0&\sigma_4&0&0&0&0\\
\sigma_5&0&\sigma_5&\sigma_5&0&0&0&0&\sigma_5&0&0\\
\sigma_{12}&0&0&\sigma_{12}&\sigma_{12}&\sigma_{12}&0&0&0&\sigma_{12}&\sigma_{12}\\
\sigma_{15}&0&\sigma_{15}&\sigma_{15}&\sigma_{15}&0&0&0&\sigma_{15}&\sigma_{15}&0\\
\sigma_{23}&\sigma_{23}&0&0&\sigma_{23}&\sigma_{23}&0&\sigma_{23}&0&0&\sigma_{23}\\
\sigma_{34}&\sigma_{34}&\sigma_{34}&0&0&\sigma_{34}&\sigma_{34}&\sigma_{34}&0&0&0\\
\sigma_{45}&\sigma_{45}&\sigma_{45}&\sigma_{45}&0&0&\sigma_{45}&0&\sigma_{45}&0&0\end{array}\right].\]
In principle, one could obtain a rational expression for $\chi$ by
formally inverting the matrix $I-Q$ using row-reduction.  The
resulting expression would be rather unwieldy.  On the other hand, if
we let $\bbQ[[\bt]]$ denote the multivariable power series ring with
{\em commuting} parameters $\bt=(t_{\sigma})$ (in this case, $10$
parameters), then we can consider the image $\chi^{abel}$ of $\chi$
under the substitution homomorphism
$\bbQ\<\<\cA\>\>\rightarrow\bbQ[[\bt]]$ that takes each $\sigma$ to
the corresponding $t_{\sigma}$.  On general principles, the resulting
power series will be a rational function of the form 
\[\chi^{abel}(\bt)=\frac{p(\bt)}{q(\bt)}\]
which can be computed easily using an computer algebra package.  In
this case, the polynomials $p$ and $q$ we obtained both have degree
$10$, $p$ with $244$ terms and $q$ with $154$.  A more
manageable substitution is to put every $\sigma_i=x$ and every
$\sigma_{ij}=y$, and treat $x$ and $y$ as commuting variables.  In this
case, the resulting series $\bar{\chi}(x,y)$ has rational expression
\[\bar{\chi}(x,y)=\frac{xy+3x+3y+1}{xy-2x-2y+1}.\]   
\end{example}

Our first result gives a condition under which the reciprocal
$\chi^*$ exists and can be explicitly computed.  As in the previous
section, we let
$J\in\bbQ^{n\times n}$ be the matrix given by  
\[J_{\sigma,\tau}=\left\{\begin{array}{ll}
(-1)^{\dim\bsigma} & \mbox{if $\St(\bsigma)\cap\btau=\emptyset$}\\
0 & \mbox{otherwise}\end{array}\right..\]
Then the matrix $Q$ has a factorization of the form 
\begin{eqnarray}
Q=D_0J
\end{eqnarray}
where $D_0$ is a quasi-regular diagonal matrix. More precisely, let
$D\in\bbQ\<K\>^{n\times n}$ be the diagonal matrix given by  
\[D_{\sigma,\sigma}=\left\{\begin{array}{ll}
1 & \mbox{if $\sigma=\emptyset$, and }\\
(-1)^{\dim\bsigma}\sigma & \mbox{otherwise}
\end{array}\right..\]
Then $D_0$ is the matrix obtained from $D$ by replacing the first
entry with $0$.  We then have the following.

\begin{lemma}\label{lem:formula}
Let $(W,S)$ be a right-angled Coxeter group, let $N$ denote the nerve,
and let $\chi$ denote the characteristic series of the greedy normal
form.  If the matrix $J=J(N)$ is invertible over $\bbQ$, then the
reciprocal $\chi^*$ exists and is given by the formula 
\[\chi^*=-\frac{1}{\bar{\bchi}(N)}A(I-D_0J^{-1})^{-1}B.\]
In particular, $\chi^*$ is a power series in $\bbQ\<\<\cA\>\>$.
\end{lemma}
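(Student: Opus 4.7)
{}
The plan is to convert $\chi^{*}=A(I-\bar{D_0}J)^{-1}B$, whose matrix involves the inverse variables $\sigma^{-1}$, into an expression involving only the $\sigma$'s together with the rational matrix $J^{-1}$, and then to exploit the column-sum identity from Theorem~\ref{thm:j} to extract the factor $-1/\bar{\bchi}(N)$.

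The first step will be to clear the $\sigma^{-1}$'s by multiplying $I-\bar{D_0}J$ on the left by $D$. Since $\bar{D}=D^{-1}$ (they are diagonal with entries $\sigma^{\pm 1}$ in matching positions, up to the common sign $(-1)^{\dim\bsigma}$), and $\bar{D_0}$ differs from $\bar{D}$ only by zeroing out the $(\emptyset,\emptyset)$ entry, the product $D\bar{D_0}$ should be the diagonal projection away from $\emptyset$. Combining this with the fact that the $\emptyset$-row of $J$ equals $-e_\emptyset^{T}$ (using the standard convention $\St(\emptyset)=N$, so $J_{\emptyset,\tau}=-\delta_{\emptyset,\tau}$), a direct row-by-row check should yield the key identity
\[
D(I-\bar{D_0}J)=D_0-J.
\]
Granting this, inverting gives $(I-\bar{D_0}J)^{-1}=(D_0-J)^{-1}D$, and since $DB=B$ (the only nonzero entry of $B$ is in the $\emptyset$-slot, where $D$ is $1$), I would conclude that $\chi^{*}=A(D_0-J)^{-1}B$.

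Next I would bring in $J^{-1}$ through the factorization $D_0-J=-(I-D_0J^{-1})J$. The entries of $D_0J^{-1}$ are $\bbQ$-linear combinations of the variables $\sigma\in\cA$, hence quasi-regular; so $(I-D_0J^{-1})^{-1}$ exists in $\bbQ\<\<\cA\>\>^{n\times n}$, which simultaneously establishes the existence of $\chi^{*}$ and exhibits it as a power series. Substituting $(D_0-J)^{-1}=-J^{-1}(I-D_0J^{-1})^{-1}$ and then applying Theorem~\ref{thm:j}---which gives $AJ=\bar{\bchi}(N)A$ and therefore forces $\bar{\bchi}(N)\neq 0$ (since $A\neq 0$ and $J$ is invertible), hence $AJ^{-1}=\bar{\bchi}(N)^{-1}A$---would then deliver the stated formula.

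The main obstacle, and really the only non-routine step, is spotting the scaling identity $D(I-\bar{D_0}J)=D_0-J$. Once it is in hand everything else is either a standard factorization or a direct appeal to the column-sum statement of Theorem~\ref{thm:j}. The identity itself depends on a fortunate cancellation between the zero entry of $\bar{D_0}$ at $\emptyset$ and the special form of $J$'s $\emptyset$-row, so verifying it row by row is the natural sanity check before moving on to the global formula.
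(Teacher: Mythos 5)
Your proposal is correct and follows essentially the same route as the paper: your identity $D(I-\bar{D_0}J)=D_0-J$ is just the paper's factorization $I-\bar{Q}=\bar{D}(D_0-J)$ multiplied through by $D=\bar{D}^{-1}$, and the remaining steps (the factorization $D_0-J=-(I-D_0J^{-1})J$, quasi-regularity of $D_0J^{-1}$ to establish existence, and $AJ=\bar{\bchi}(N)A$ from Theorem~\ref{thm:j} with $\bar{\bchi}(N)\neq 0$) match the paper's argument. Your explicit verification that the $\emptyset$-row of $J$ is $-e_{\emptyset}^{T}$ (so $PJ=-P$) is exactly the cancellation the paper uses implicitly.
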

 
\begin{proof}{}
First we note that $I-\bar{Q}$ can be written as
\[I-\bar{Q}=I-\bar{D_0}J=\bar{D}D-\bar{D_0}J.\]
Letting $P$ denote the square matrix with all zeros except a $1$ in
the top left entry, we then have
\[\bar{D}D-\bar{D_0}J=(\bar{D}D-P)-(\bar{D_0}J-P)=\bar{D}D_0-\bar{D}J.\]
Combining these equations, we obtain  
\[I-\bar{Q}=\bar{D}D_0-\bar{D}J=\bar{D}(-J+D_0).\]
If $J$ is invertible, this gives  
\[I-\bar{Q}=(-\bar{D})(I-D_0J^{-1})J.\]
Since $D^{-1}=\bar{D}$ and $D_0J^{-1}$ has quasi-regular entries, $I-\bar{Q}$ is invertible with inverse given by 
\[(I-\bar{Q})^{-1}=J^{-1}(I-D_0J^{-1})^{-1}(-D).\]
Substituting this into 
\[\chi^*=A(I-\bar{Q})^{-1}B\]
gives 
\[\chi^*=AJ^{-1}(I-D_0J^{-1})^{-1}(-DB).\]
By Theorem~\ref{thm:j}, we have $AJ=\bar{\bchi}(N)A$. Multiplying by
$J^{-1}$ on the right and dividing by $\bar{\bchi}(N)$ (which is
necessarily nonzero since $J$ is invertible) gives
$AJ^{-1}=\frac{1}{\bar{\bchi}(N)}A$.  Substituting this and $-DB=-B$ into
the previous expression for $\chi^*$ gives the desired formula.
\end{proof}

\begin{example}
Let $(W,S)$ be the infinite dihedral group $D_{\infty}$.  That is, $S$
has two noncommuting generators $\{x,y\}$, and the nerve is the
simplicial complex consisting of two distinct points (i.e., the nerve
is the $0$-dimensional sphere).  The matrix $Q$ and the calculation of
$\chi$ and $\chi^*$ were already illustrated in Examples~\ref{ex:square-free}
and \ref{ex:square-free-2}. Note that in this case $\chi^*=-\chi$.
\end{example}

\begin{example}
Let $(W,S)$ be the free product $\bbZ_2\ast\bbZ_2\ast\bbZ_2$.  This
time $S$ consists of $3$-elements $\{x,y,z\}$, no two of which
commute.  The nerve is $3$ points and the matrices $Q$, $J$, and $D$,
are given by:
\[Q=\left[\begin{array}{cccc}
0&0&0&0\\
x&0&x&x\\
y&y&0&y\\
z&z&z&0\end{array}\right],\;
J=\left[\begin{array}{cccc}
-1&0&0&0\\
1&0&1&1\\
1&1&0&1\\
1&1&1&0\end{array}\right],\;
D=\left[\begin{array}{cccc}
1&0&0&0\\
0&x&0&0\\
0&0&y&0\\
0&0&0&z\end{array}\right].\]
Since $J$ is invertible with inverse
\[J^{-1}=\frac{1}{2}\left[\begin{array}{cccc}
-2&0&0&0\\
1&-1&1&1\\
1&1&-1&1\\
1&1&1&-1\end{array}\right],\]
the reciprocal $\chi^*$ exists (by Lemma~\ref{lem:formula}) and is given 
by  
\begin{eqnarray*}
\chi^*& = & -\frac{1}{2}A(I-D_0J^{-1})^{-1}B\\
&=& -\frac{1}{2}\left[\begin{array}{cccc}1&1&1&1\end{array}\right] 
\left(I-\frac{1}{2}\left[\begin{array}{cccc}0&0&0&0\\x&-x&x&x\\y&y&-y&y\\z&z&z&-z\end{array}\right]\right)^{-1}\left[\begin{array}{c}1\\0\\0\\0\end{array}\right].\\
&=&-\frac{1}{2}-\frac{1}{4}(x+y+z)-\frac{1}{8}(xy+xz+yx+yz+zx+zy-x^2-y^2-z^2)-\cdots
\end{eqnarray*}
\end{example} 

We can now prove the main theorem.

\begin{theorem}{\bf (Reciprocity)}\label{thm:cox-rec} If $(W,S)$ is a right-angled
  Coxeter group and the nerve $N$ is an Eulerian $k$-sphere, then
  $\chi^*$ exists and  
\[\chi^*=(-1)^{k+1}\chi.\] 
\end{theorem}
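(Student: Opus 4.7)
The plan is to combine Lemma~\ref{lem:formula} with Theorem~\ref{thm:j}, since essentially all the hard work has already been done in the preceding sections. Given that $N$ is an Eulerian $k$-sphere, Theorem~\ref{thm:j} gives $J^2 = \Id$, so in particular $J$ is invertible over $\bbQ$ with $J^{-1} = J$. This immediately allows us to invoke Lemma~\ref{lem:formula}, which asserts that $\chi^*$ exists and admits the representation
\[
\chi^* \;=\; -\frac{1}{\bar\bchi(N)}\, A\bigl(I - D_0 J^{-1}\bigr)^{-1} B.
\]

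Next I would substitute $J^{-1} = J$ into this expression. Since $Q = D_0 J$ by construction, the right-hand side collapses to
\[
\chi^* \;=\; -\frac{1}{\bar\bchi(N)}\, A(I - D_0 J)^{-1} B \;=\; -\frac{1}{\bar\bchi(N)}\, A(I-Q)^{-1} B \;=\; -\frac{1}{\bar\bchi(N)}\,\chi.
\]

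To finish, I would read off $\bar\bchi(N)$ from the Eulerian hypothesis: by definition, an Eulerian $k$-sphere has $\bar\bchi(N) = (-1)^k$. Therefore
\[
-\frac{1}{\bar\bchi(N)} \;=\; -\frac{1}{(-1)^k} \;=\; (-1)^{k+1},
\]
which yields $\chi^* = (-1)^{k+1}\chi$ as claimed.

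Since both Lemma~\ref{lem:formula} and Theorem~\ref{thm:j} are already established, there is no real obstacle remaining; all the technical content has been absorbed into the combinatorial fact that the anti-incidence matrix squares to the identity on an Eulerian flag complex. The only point deserving a brief sanity check is that the formula of Lemma~\ref{lem:formula} is legitimately applicable here — i.e., that $J$ is invertible over $\bbQ$ — but this is immediate from $J^2 = \Id$.
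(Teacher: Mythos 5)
Your proposal is correct and is essentially identical to the paper's own proof: invoke Theorem~\ref{thm:j} to get $J^2=\Id$ (using that the nerve of a right-angled Coxeter group is flag), apply Lemma~\ref{lem:formula}, substitute $J^{-1}=J$ to recover $Q=D_0J$, and evaluate $-1/\bar{\bchi}(N)=(-1)^{k+1}$. Nothing is missing.
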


\begin{proof}{}
By Lemma~\ref{lem:formula}, $\chi^*$ exists and is given by 
\[\chi^*=-\frac{1}{\bar{\bchi}(N)}A(I-D_0J^{-1})^{-1}B.\]
Since $N$ is an Eulerian $k$-sphere, $\bar{\bchi}(N)=(-1)^{k}$ and 
$J$ is an involution (by Theorem~\ref{thm:j}). Thus, we have 
\[\chi^*=-(-1)^{k}A(I-D_0J)^{-1}B=(-1)^{k+1}A(I-Q)^{-1}B=(-1)^{k+1}\chi.\]
\end{proof}

\begin{example}
The right-angled pentagonal Coxeter group considered in
Example~\ref{ex:pent} has nerve equal to the boundary complex of a
pentagon, hence is an Eulerian $1$-sphere.  By
Theorem~\ref{thm:cox-rec}, the characteristic series $\chi$ satisfies
$\chi^*=\chi$.  It follows that the rational function
$\chi^{abel}(\bt)=\frac{p(\bt)}{q(\bt)}$ described in
Example~\ref{ex:pent} satisfies the reciprocity formula   
\[\chi^{abel}(\bt^{-1})=\chi^{abel}(\bt)\]
where the expression on the left-side denotes the rational function
obtained by replacing each commuting parameter $t_{\sigma}$ in
$\chi^{abel}(\bt)$ with its reciprocal $1/t_{\sigma}$.  This formula
can be verified easily with a computer algebra package.  For an even
simpler illustration of reciprocity, we can use the series
$\bar{\chi}(x,y)$ obtained by the substitution $\sigma_i=x$,
$\sigma_{ij}=y$.  In this case, the resulting series has rational
expression  
\[\bar{\chi}(x,y)=\frac{xy+3x+3y+1}{xy-2x-2y+1},\]   
which clearly satisfies the reciprocity formula
\[\bar{\chi}(x^{-1},y^{-1})=\bar{\chi}(x,y).\]
\end{example}

The following example shows that the right-angled assumption in
Theorem~\ref{thm:cox-rec} cannot be removed.
 
\begin{example}
Consider the affine Coxeter group $\tilde{B}_2$.  The nerve $N$ is the
boundary complex of a triangle.  Though not a flag complex, $N$ {\em
  is} an Eulerian sphere, and the usual growth series with respect to
the standard generators has rational expression
\[\gamma(t)=\frac{t^4+2t^3+2t^2+2t+1}{t^4-t^3-t+1}\]
Since $\tilde{B_2}$ is an affine Coxeter group, this example falls
into Serre's original class of examples for which the reciprocity
formula $\gamma(1/t)=\gamma(t)$ holds.  On the other hand, the
characteristic series $\chi$ for the greedy normal form {\em does not}
satisfy the reciprocity formula $\chi^*=\chi$.  If it did, then
substituting the single variable $t$ for each $\sigma\in\cA$ in $\chi$
would result in a series $\bar{\chi}(t)$ which satisfies the
reciprocity formula. But as we computed in Example~\ref{ex:b2}, this
series has rational representation  
\[\bar{\chi}=\frac{1+4t-3t^2+4t^3-2t^4}{1-2t+t^2},\]
and this does not satisfy $\bar{\chi}(1/t)=\bar{\chi}(t)$. 
\end{example}

\section{Applications to growth series}\label{s:growthseries}

{\bf Standard generators.}
Let $W$ be a Coxeter group with generating set $S$, and let
$i:S\rightarrow I$ be any function that is constant on conjugacy
classes.  Let $\bt$ denote the $I$-tuple $(t_i)_{i \in I}$, and let
$\bbQ[[\bt]]$ denote the ring of formal power series in the commuting
variables $t_i$.  For any $w\in W$, let $s_1\cdots s_n$ be a reduced
expression for $w$.  Then the monomial $\prod_{j=1}^n
t_{i(s_j)}\in\bbQ[[\bt]]$ 
does not depend on the choice of reduced expression (this follows from
Tits' solution to the word problem -- see, e.g., \cite{Davis-book}).
We denote this monomial by $t^w$.  We then define the {\em standard
  growth series} $\gamma(\bt)\in \bbQ[[\bt]]$ by 
\[\gamma(\bt)=\sum_{w\in W} t^w.\]

Similarly, we let $\bbQ[W][[\bt]]$ denote the ring of formal power
series with commuting parameters $\bt$ and coefficients in the
(noncommutative) group ring $\bbQ[W]$.  The {\em standard complete
  growth series} $\tilde{\gamma}(\bt)\in \bbQ[W][[\bt]]$ is then defined
by 
\[\tilde{\gamma}(\bt)=\sum_{w\in W}w t^w.\]

Note that when $I$ is a singleton set, $\gamma$ (respectively,
$\tilde{\gamma}$) is the ordinary single-variable (resp., complete)
growth series.  At the other extreme, if $W$ is right-angled (or more
generally if all $m(s,s')$ are even), then no 
two generators in $S$ are conjugate in $W$, so we can take $I=S$ and
$i:S\rightarrow S$ to be the identity map.  In this case, we have a
parameter $t_s$ for each generator $s\in S$.

Now let $\cA$ be the proper nerve of $W$, and let $\bbQ\<\<\cA\>\>$
denote the corresponding power series ring.  
We define homomorphisms 
$\phi:\bbQ\<\<\cA\>\>\rightarrow\bbQ[[\bt]]$ and 
$\tilde{\phi}:\bbQ\<\<\cA\>\>\rightarrow\bbQ[W][[\bt]]$ by 
$\phi(\sigma)=t^{w_{\sigma}}$ and
$\tilde{\phi}(\sigma)=w_{\sigma}t^{w_{\sigma}}$.  Let
$\cL\subseteq\cA^*$ be the greedy normal form for $W$ and let $\chi$
be its characteristic series.  Then 
\[\phi(\chi)=\gamma(\bt)\] 
and   
\[\tilde{\phi}(\chi)=\tilde{\gamma}(\bt).\]
Since $\phi$ and $\tilde{\phi}$ take rational series to rational
series, Theorem~\ref{thm:cox-rat} gives the following.

\begin{corollary}
For any Coxeter group, both of the growth series $\gamma(\bt)$ and
$\tilde{\gamma}(\bt)$ are rational.
\end{corollary}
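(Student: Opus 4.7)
{ (sketch / plan)}
The plan is to deduce the corollary directly from Theorem~\ref{thm:cox-rat} by pushing the rational representation of $\chi$ forward along the homomorphisms $\phi$ and $\tilde{\phi}$ defined just before the statement. First I would check that the two assignments $\sigma\mapsto t^{w_\sigma}$ and $\sigma\mapsto w_\sigma t^{w_\sigma}$ extend to well-defined ring homomorphisms on the completions $\bbQ\<\<\cA\>\>\to\bbQ[[\bt]]$ and $\bbQ\<\<\cA\>\>\to\bbQ[W][[\bt]]$. Continuity is what needs a remark: any monomial in $\bt$ (or $\bt$ times a group element) receives contributions only from words $\alpha=\sigma_n\cdots\sigma_1\in\cA^*$ with $|w_{\sigma_n}|+\cdots+|w_{\sigma_1}|$ equal to a fixed number, and there are finitely many such words of bounded total letter-length, so the images of infinite sums are well-defined.

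Next I would check the identities $\phi(\chi)=\gamma(\bt)$ and $\tilde{\phi}(\chi)=\tilde{\gamma}(\bt)$. For a word $\alpha=\sigma_n\cdots\sigma_1\in\cL$, the preceding section shows that $\pi(\alpha)=w_{\sigma_n}\cdots w_{\sigma_1}$ and that this product is reduced, i.e.\ $|\pi(\alpha)|=|w_{\sigma_n}|+\cdots+|w_{\sigma_1}|$. Since the parameters $t_i$ commute and $i$ is constant on conjugacy classes, concatenating reduced expressions for the $w_{\sigma_j}$ yields a reduced expression for $\pi(\alpha)$ and therefore $\phi(\alpha)=t^{w_{\sigma_n}}\cdots t^{w_{\sigma_1}}=t^{\pi(\alpha)}$; likewise $\tilde{\phi}(\alpha)=\pi(\alpha)t^{\pi(\alpha)}$. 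Because $\pi$ restricts to a bijection $\cL\to W$, summing over $\alpha\in\cL$ then gives exactly $\gamma(\bt)$ and $\tilde{\gamma}(\bt)$.

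Finally, I would apply Theorem~\ref{thm:cox-rat} to obtain a representation $\chi=A(I-Q)^{-1}B$ with $A,B$ rational matrices and $Q$ a matrix of linear homogeneous polynomials in $\cA$. Applying $\phi$ (resp.\ $\tilde{\phi}$) entrywise produces a representation of the same shape for $\gamma(\bt)$ (resp.\ $\tilde{\gamma}(\bt)$), because a ring homomorphism preserves sums, products, and quasi-inverses of quasi-regular elements: the entries of $\phi(Q)$ are polynomials with zero constant term in $\bbQ[[\bt]]$, so $I-\phi(Q)$ is invertible and $\phi((I-Q)^{-1})=(I-\phi(Q))^{-1}$, and similarly for $\tilde{\phi}$. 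Thus both growth series admit rational (in fact recognizable) expressions, establishing the corollary. The only step that needs care is the continuity argument in the first paragraph, but this is routine since the greedy normal form assigns a unique word to each group element.
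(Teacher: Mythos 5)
Your proposal is correct and follows exactly the paper's route: the paper defines the substitution homomorphisms $\phi$ and $\tilde{\phi}$, records the identities $\phi(\chi)=\gamma(\bt)$ and $\tilde{\phi}(\chi)=\tilde{\gamma}(\bt)$, and deduces the corollary from Theorem~\ref{thm:cox-rat} because these homomorphisms carry rational series to rational series. You merely spell out the well-definedness and quasi-inverse-preservation details that the paper leaves implicit, and these checks are all sound.
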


Since $\phi$ and $\tilde{\phi}$ are homomorphisms,
Theorem~\ref{thm:cox-rec} gives the following. 

\begin{corollary}
Let $W$ be a right-angled Coxeter group and assume the nerve is an
Eulerian $k$-sphere.  Then 
\[\gamma(\bt^{-1})=(-1)^{k+1}\gamma(\bt)\]
and 
\[\tilde{\gamma}(\bt^{-1})=(-1)^{k+1}\tilde{\gamma}(\bt)\]
where $\bt^{-1}$ denotes the $I$-tuple $(t_i^{-1})_{i\in I}$.
\end{corollary}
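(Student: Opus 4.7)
The goal is to push the noncommutative reciprocity identity $\chi^* = (-1)^{k+1}\chi$ from Theorem~\ref{thm:cox-rec} through the homomorphisms $\phi$ and $\tilde\phi$ introduced just before the statement. Since both claimed reciprocity formulas follow from essentially the same argument, I will describe the plan for the ordinary growth series $\gamma$; the argument for $\tilde\gamma$ is identical after replacing $\phi$ by $\tilde\phi$.

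First I would extend $\phi$ to a ring homomorphism from the Laurent polynomial ring $\bbQ(\cA)$ into an appropriate Laurent extension of $\bbQ[[\bt]]$ by setting $\phi(\sigma^{-1}) = t^{-w_\sigma}$, and similarly extend $\tilde\phi$ by $\tilde\phi(\sigma^{-1}) = w_\sigma t^{-w_\sigma}$. The latter is consistent because $w_\sigma$ is an involution, so $w_\sigma^{-1} = w_\sigma$, and because the formal parameters $\bt$ commute with coefficients from $\bbQ[W]$ in the target ring. Each extension is a well-defined ring homomorphism on the free Laurent polynomial ring over $\cA$.

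Next I would start from the rational representations $\chi = A(I-Q)^{-1}B$ and $\chi^* = A(I-\bar Q)^{-1}B$ (the latter afforded by Proposition~\ref{prop:recip} once Theorem~\ref{thm:cox-rec} guarantees that $\chi^*$ exists). Applying $\phi$ entrywise, the matrix $\phi(Q)$ has monomial entries $t^{w_\sigma}$, while $\phi(\bar Q)$ has entries $t^{-w_\sigma}$; in other words, $\phi(\bar Q)$ is precisely what one obtains from $\phi(Q)$ under the substitution $t_i \mapsto t_i^{-1}$. Since the preceding corollary already identifies $\phi(\chi) = A(I-\phi(Q))^{-1}B$ with $\gamma(\bt)$, the analogous calculation with $\bar Q$ in place of $Q$ shows $\phi(\chi^*) = A(I-\phi(\bar Q))^{-1}B$ equals the rational function obtained from $\gamma(\bt)$ by substituting $t_i \mapsto t_i^{-1}$, namely $\gamma(\bt^{-1})$.

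Finally, applying $\phi$ to the identity $\chi^* = (-1)^{k+1}\chi$ from Theorem~\ref{thm:cox-rec} gives $\gamma(\bt^{-1}) = (-1)^{k+1}\gamma(\bt)$, and repeating the argument with $\tilde\phi$ yields $\tilde\gamma(\bt^{-1}) = (-1)^{k+1}\tilde\gamma(\bt)$. There is no serious obstacle here: Theorem~\ref{thm:cox-rec} has already carried all the combinatorial and algebraic weight, and the only technical point to verify is that the extended homomorphism $\phi$ (respectively $\tilde\phi$) intertwines the formal inversion $\sigma \leftrightarrow \sigma^{-1}$ in $\bbQ(\cA)$ with the substitution $t_i \leftrightarrow t_i^{-1}$ on monomials. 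This is a formality once one checks that the substitution $t_i \mapsto t_i^{-1}$ is a ring automorphism of the rational function field $\bbQ(\bt)$ and that it commutes with entrywise application to a matrix whose entries are among the $\phi(\sigma)$'s.
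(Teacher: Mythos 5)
Your proposal is correct and takes the same route as the paper, which simply applies the homomorphisms $\phi$ and $\tilde\phi$ to the identity $\chi^*=(-1)^{k+1}\chi$ of Theorem~\ref{thm:cox-rec}. You spell out the one point the paper leaves implicit — that $\phi$ (resp.\ $\tilde\phi$) intertwines the formal inversion $\sigma\mapsto\sigma^{-1}$ on the representation $(A,Q,B)$ with the substitution $t_i\mapsto t_i^{-1}$ on the resulting rational function, using that each $w_\sigma$ is an involution — which is a worthwhile clarification but not a different argument.
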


Most of these rationality and reciprocity formulas are well-known,
with the possible exception of the reciprocity formula for the
complete growth series.

{\bf Greedy generators.}
Let $W$ be a Coxeter group and let $A\subset W$ be the generating set
$A=\{w_{\sigma}\;|\;\sigma\in\cA\}$.  We define the (single variable)
{\em (left) greedy growth series} $\gamma_A(t)\in\bbQ[[t]]$ by  
\[\gamma_A(t)=\sum_{w\in W}t^{|w|}\]
where $|w|$ now denotes the word length of $w$ {\em with respect to $A$}.
This series $\gamma_A(t)$ was considered by the author and R.~Glover
in \cite{GS} under the name ``automatic growth series''.  (In fact, it
was the observation that rational expressions for some of these series 
had palindromic numerator and denominator which suggested a more general
reciprocity formula.)  Similarly, one defines the {\em greedy complete
  growth series} by  
\[\tilde{\gamma}_A(t)=\sum_{w\in W}wt^{|w|}.\]

For general Coxeter groups, the length (in $\cA^*$) of a greedy word
$\alpha$ need not coincide with the length of $w=\pi(\alpha)$.   

\begin{example} Let $W$ be the dihedral group of order $8$ with generators
$s,t$.  Then $\cA=\{\{s\},\{t\},\{s,t\}\}$ and $A=\{s,t,stst\}$.  The
element $w=sts$ has $\alpha=\{s\}\{t\}\{s\}$ as its greedy
representative, but can be written as a product of just two
generators: $w=(stst)(t)$.  
\end{example}

However, if $W$ is right-angled and $\alpha\in\cL$, then the length of
$\alpha$ in $\cA^*$ does coincide with $|\pi(\alpha)|$.  This means
that for right-angled Coxeter groups, summing $t^{|w|}$ over all $w\in
W$ is the same as summing $t^{l(\alpha)}$ over all
$\alpha\in\cL$ (where $l(\alpha)$ denotes the length of $\alpha$ as a
word in $\cA^*$).   Hence, if we define homomorphisms
$\phi_A:\bbQ\<\<\cA\>\>\rightarrow\bbQ[[t]]$ and 
$\tilde{\phi}_A:\bbQ\<\<\cA\>\>\rightarrow\bbQ[W][[t]]$ 
by $\phi_A(\sigma)=t$ and by $\tilde{\phi}_A(\sigma)=w_{\sigma}t$,
respectively, then we get 
\[\phi_A(\chi)=\gamma_A(t)\]
and 
\[\tilde{\phi}_A(\chi)=\tilde{\gamma}_A(t).\]
Now applying our main theorems gives the following.

\begin{corollary}
Let $W$ be a right-angled Coxeter group, and let $\gamma_A(t)$ and
$\tilde{\gamma}_A(t)$ denote, respectively, the greedy growth series
and complete greedy growth series of $W$.  Then $\gamma_A(t)$ and
$\tilde{\gamma}_A(t)$ are both rational series.  Moreover, if the
nerve of $W$ is an Eulerian $k$-sphere then
\[\gamma_A(t^{-1})=(-1)^{k+1}\gamma_A(t)\]
and 
\[\tilde{\gamma}_A(t^{-1})=(-1)^{k+1}\tilde{\gamma}_A(t).\]
\end{corollary}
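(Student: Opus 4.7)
The plan is to reduce both assertions to the corresponding results about the noncommutative series $\chi$ (Theorems~\ref{thm:A} and \ref{thm:B}) by pushing them through the substitution homomorphisms $\phi_A$ and $\tilde\phi_A$ defined just above the corollary statement. The essential preliminary fact, recorded in the paragraph preceding the statement, is that for a right-angled Coxeter group the length of a greedy word $\alpha\in\cL$ as a string in $\cA^*$ equals $|\pi(\alpha)|$ when measured with respect to the generating set $A=\{w_\sigma\mid\sigma\in\cA\}$. This is precisely what fails in the dihedral example just given, and it implies the key identities $\phi_A(\chi)=\gamma_A(t)$ and $\tilde\phi_A(\chi)=\tilde\gamma_A(t)$.

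For rationality, I would simply observe that $\phi_A$ and $\tilde\phi_A$ are ring homomorphisms carrying polynomials to polynomials and quasi-regular series to quasi-regular series, so they preserve the property of being rational. Applying them to the rational series $\chi$ furnished by Theorem~\ref{thm:A} then produces the rational series $\gamma_A(t)$ and $\tilde\gamma_A(t)$.

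For the reciprocity formulas, I would fix a representation $(C,Q,B)$ of $\chi$ in the sense of Theorem~\ref{thm:psrep}, so that $\chi^*=C(I-\bar Q)^{-1}B$, and extend $\phi_A$ and $\tilde\phi_A$ to Laurent polynomials by declaring $\phi_A(\sigma^{-1})=t^{-1}$ and $\tilde\phi_A(\sigma^{-1})=w_\sigma^{-1}t^{-1}=w_\sigma t^{-1}$, using that $w_\sigma$ is an involution. Under these extensions, $\phi_A(\bar Q)$ is obtained from $\phi_A(Q)$ by the single-variable substitution $t\mapsto t^{-1}$, and likewise for $\tilde\phi_A$. Applying $\phi_A$ to the identity $\chi^*=(-1)^{k+1}\chi$ from Theorem~\ref{thm:B} therefore yields
\[\gamma_A(t^{-1})=\phi_A(\chi^*)=(-1)^{k+1}\phi_A(\chi)=(-1)^{k+1}\gamma_A(t),\]
and the parallel computation with $\tilde\phi_A$ gives the complete-growth-series formula. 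The only point requiring care is verifying that the representation-theoretic reciprocal $\chi^*$ is transported faithfully to the substitution $t\mapsto t^{-1}$ on $\gamma_A$ (and $\tilde\gamma_A$); this is immediate from the matrix definition of $\chi^*$ together with the elementary substitution identities above, and I do not anticipate a serious obstacle.
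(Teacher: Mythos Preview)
Your proposal is correct and follows exactly the approach the paper indicates: the paper's proof consists only of the sentence ``Now applying our main theorems gives the following,'' and you have spelled out precisely what that application entails. Your care in extending $\tilde\phi_A$ to Laurent polynomials via $\tilde\phi_A(\sigma^{-1})=w_\sigma t^{-1}$ (using that $w_\sigma$ is an involution) is a detail the paper leaves implicit, but it is the right one.
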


\bibliographystyle{plain}
\bibliography{biblio}

\begin{thebibliography}{10}

\bibitem{BB}
Anders Bj{\"o}rner and Francesco Brenti.
\newblock {\em Combinatorics of {C}oxeter groups}, volume 231 of {\em Graduate
  Texts in Mathematics}.
\newblock Springer, New York, 2005.

\bibitem{Bourbaki}
N.~Bourbaki.
\newblock {\em \'{E}l\'ements de math\'ematique. {F}asc. {XXXIV}. {G}roupes et
  alg\`ebres de {L}ie. {C}hapitre {IV}: {G}roupes de {C}oxeter et syst\`emes de
  {T}its. {C}hapitre {V}: {G}roupes engendr\'es par des r\'eflexions.
  {C}hapitre {VI}: syst\`emes de racines}.
\newblock Actualit\'es Scientifiques et Industrielles, No. 1337. Hermann,
  Paris, 1968.

\bibitem{BH}
Brigitte Brink and Robert~B. Howlett.
\newblock A finiteness property and an automatic structure for {C}oxeter
  groups.
\newblock {\em Math. Ann.}, 296(1):179--190, 1993.

\bibitem{Casselman}
W.~A. Casselman.
\newblock Automata to perform basic calculations in {C}oxeter groups.
\newblock In {\em Representations of groups ({B}anff, {AB}, 1994)}, volume~16
  of {\em CMS Conf. Proc.}, pages 35--58. Amer. Math. Soc., Providence, RI,
  1995.

\bibitem{Changey}
N.~Changey.
\newblock Rationalit\'{e} des s\'{e}ries de croissance compl\'{e}te des groupes
  de coxeter.
\newblock M\'{e}moire DEA, 1997.

\bibitem{CD}
Ruth Charney and Michael Davis.
\newblock Reciprocity of growth functions of {C}oxeter groups.
\newblock {\em Geom. Dedicata}, 39(3):373--378, 1991.

\bibitem{Davis-annals}
Michael~W. Davis.
\newblock Groups generated by reflections and aspherical manifolds not covered
  by {E}uclidean space.
\newblock {\em Ann. of Math. (2)}, 117(2):293--324, 1983.

\bibitem{Davis-book}
Michael~W. Davis.
\newblock {\em The geometry and topology of {C}oxeter groups}, volume~32 of
  {\em London Mathematical Society Monographs Series}.
\newblock Princeton University Press, Princeton, NJ, 2008.

\bibitem{DDJO}
Michael~W. Davis, Jan Dymara, Tadeusz Januszkiewicz, and Boris Okun.
\newblock Weighted {$L\sp 2$}-cohomology of {C}oxeter groups.
\newblock {\em Geom. Topol.}, 11:47--138, 2007.

\bibitem{Dymara}
Jan Dymara.
\newblock Thin buildings.
\newblock {\em Geom. Topol.}, 10:667--694 (electronic), 2006.

\bibitem{FP}
William~J. Floyd and Steven~P. Plotnick.
\newblock Growth functions on {F}uchsian groups and the {E}uler characteristic.
\newblock {\em Invent. Math.}, 88(1):1--29, 1987.

\bibitem{GS}
R.~Glover and R.~Scott.
\newblock Automatic growth series for right-angled coxeter groups.
\newblock Preprint, http://schubert.scu.edu/rscott/Research, 2008.

\bibitem{GdlH}
R.~Grigorchuk and P.~de~la Harpe.
\newblock On problems related to growth, entropy, and spectrum in group theory.
\newblock {\em J. Dynam. Control Systems}, 3(1):51--89, 1997.

\bibitem{GN}
Rostislav Grigorchuk and Tatiana Nagnibeda.
\newblock Complete growth functions of hyperbolic groups.
\newblock {\em Invent. Math.}, 130(1):159--188, 1997.

\bibitem{Humphreys}
James~E. Humphreys.
\newblock {\em Reflection groups and {C}oxeter groups}, volume~29 of {\em
  Cambridge Studies in Advanced Mathematics}.
\newblock Cambridge University Press, Cambridge, 1990.

\bibitem{Liardet}
F.~Liardet.
\newblock Croissance dans les groupes virtuellement ab\'{e}liens.
\newblock Th\`{e}se, University of Gen\`{e}ve, 1996.

\bibitem{Mamaghani}
M.~J. Mamaghani.
\newblock Complete growth series of {C}oxeter groups with more than three
  generators.
\newblock {\em Bull. Iranian Math. Soc.}, 29(1):65--76, 88, 2003.

\bibitem{NR}
Graham Niblo and Lawrence Reeves.
\newblock Groups acting on {${\rm CAT}(0)$} cube complexes.
\newblock {\em Geom. Topol.}, 1:approx.\ 7 pp.\ (electronic), 1997.

\bibitem{Reutenauer}
Christophe Reutenauer.
\newblock A survey of noncommutative rational series.
\newblock In {\em Formal power series and algebraic combinatorics ({N}ew
  {B}runswick, {NJ}, 1994)}, volume~24 of {\em DIMACS Ser. Discrete Math.
  Theoret. Comput. Sci.}, pages 159--169. Amer. Math. Soc., Providence, RI,
  1996.

\bibitem{SS}
Arto Salomaa and Matti Soittola.
\newblock {\em Automata-theoretic aspects of formal power series}.
\newblock Springer-Verlag, New York, 1978.
\newblock Texts and Monographs in Computer Science.

\bibitem{Schutzenberger}
M.~P. Sch{\"u}tzenberger.
\newblock On the definition of a family of automata.
\newblock {\em Information and Control}, 4:245--270, 1961.

\bibitem{Serre}
Jean-Pierre Serre.
\newblock Cohomologie des groupes discrets.
\newblock In {\em Prospects in mathematics ({P}roc. {S}ympos., {P}rinceton
  {U}niv., {P}rinceton, {N}.{J}., 1970)}, pages 77--169. Ann. of Math. Studies,
  No. 70. Princeton Univ. Press, Princeton, N.J., 1971.

\bibitem{Stanley}
Richard~P. Stanley.
\newblock {\em Enumerative combinatorics. {V}ol. 1}, volume~49 of {\em
  Cambridge Studies in Advanced Mathematics}.
\newblock Cambridge University Press, Cambridge, 1997.
\newblock With a foreword by Gian-Carlo Rota, Corrected reprint of the 1986
  original.

\bibitem{Steinberg}
Robert Steinberg.
\newblock {\em Endomorphisms of linear algebraic groups}.
\newblock Memoirs of the American Mathematical Society, No. 80. American
  Mathematical Society, Providence, R.I., 1968.

\end{thebibliography}

\end{document}